\documentclass[twoside, 10pt]{article}
\usepackage{amsmath,amsthm,amssymb,amscd,amstext,amsfonts}
\usepackage[utf8]{inputenc}
\usepackage{mathtools}
\usepackage{mathrsfs}
\usepackage{thmtools}
\usepackage{latexsym}
\usepackage{verbatim}
\usepackage{framed}
\usepackage{graphicx}
\usepackage{stmaryrd}
\usepackage{fullpage}
\usepackage{bm}
\usepackage{url}
\usepackage{physics}
\usepackage{dsfont}
\usepackage{tikz}
\usepackage{titling}
\usepackage{color}
\usepackage{xpatch}
\usepackage{algorithm}
\usepackage{algorithmic}
\usepackage{epsf}
\usepackage{systeme}
\usepackage[explicit, small]{titlesec}
\titlelabel{\thetitle.\enspace}
\usepackage[small, labelfont=bf, labelsep=period]{caption}

\usepackage[breaklinks=false, linktocpage=true,]{hyperref}

\hypersetup{
	colorlinks = true,
    linkcolor={purple},
    urlcolor={purple},
    citecolor={blue!80!black}
}
\usepackage[nameinlink, noabbrev, capitalize]{cleveref}
\makeatletter
\def\tagform@#1{\maketag@@@{(\ignorespaces{\oldstylenums{#1}}\unskip\@@italiccorr)}}
\renewcommand{\eqref}[1]{\textup{{\normalfont(\oldstylenums{\ref{#1}}}\normalfont)}}

\newcommand{\ps@bookheader}{%
\renewcommand\@oddfoot{\hfil}%
\renewcommand\@evenfoot{\hfil}%
\renewcommand\@oddhead{\ifnum\value{page}>1
{\small\hfil GAVALAKIS, GOH, AND KONTOYIANNIS}\hfil\thepage\else\hfil\fi}}
\renewcommand\@evenhead{\thepage\hfil\small ENTROPY LOWER BOUNDS AND SUM-PRODUCT PHENOMENA\hfil}%
\let\over\@@over
\let\atop\@@atop
\makeatother

\usepackage{enumitem}
\setenumerate{label={\rm\roman*)}}

\titleformat{\paragraph}[runin]{\normalfont\normalsize\bfseries}{\theparagraph}{1em}{#1}[.]
\titlespacing{\paragraph}{0pt}{3.25ex plus 1ex minus .2ex}{0.7em}

\usepackage[letterpaper, total={5.5in, 8in}, vmarginratio=1:1, hmarginratio=1:1, headsep=21pt]{geometry}

\newcommand\eps{\epsilon}

\newcommand\FF{{\mathbf{F}}}
\newcommand\ex{{\mathbf{E}}}
\DeclareMathOperator{\Eta}{\mathbf{H}}

\DeclareMathOperator{\Etamin}{\mathbf{H}_\infty}
\DeclareMathOperator{\pr}{\mathbf{P}}

\newcommand\given{\mathbin{|}}
\newcommand{\one}{\mathop{\mathbf{1}}\nolimits}

\DeclareMathOperator{\dR}{d_{\mathrm{R}}}

\newcommand{\RR}{\mathbf{R}}   
\newcommand{\CC}{\mathbf{C}}   
\newcommand{\ZZ}{\mathbf{Z}}   

\declaretheoremstyle[bodyfont=\normalfont\slshape, notefont=\normalfont\itshape,notebraces={{\rm(}}{{\rm)}}, postheadspace=0.5em,headpunct={\rm.}, spaceabove=8pt, spacebelow=8pt]{slbody}

\declaretheorem[name=Theorem, numberwithin=section, style=slbody]{theorem}
\declaretheorem[name=Lemma, numberwithin=section, sibling=theorem, style=slbody]{lemma}
\declaretheorem[name=Corollary, numberwithin=section, sibling=theorem, style=slbody]{corollary}
\declaretheorem[name=Proposition, numberwithin=section, sibling=theorem, style=slbody]{proposition}
\declaretheorem[name=Conjecture, numberwithin=section, sibling=theorem, style=slbody]{conjecture}



\usepackage[textwidth=3cm]{todonotes}
\renewcommand{\maketitle}{%
  \begin{center}
    {\large\bf Entropy lower bounds and sum-product phenomena}\\
    \vskip 36pt
    {\sc Lampros Gavalakis, Marcel K.~Goh, {\rm and} Ioannis Kontoyiannis}
    \medskip
    \vskip 36pt
  \end{center}
}
\title{}\author{}\date{}

\begin{document}

\maketitle
\renewenvironment{abstract}{\quotation\noindent\small{\bfseries\abstractname.\enspace}}{\endquotation}

\begin{abstract}
Various lower bounds are established 
for the entropy of sums, products
and their combinations.
First, we derive a prime-field analogue of a version
of the entropy power  
inequality established by Tao over torsion-free groups. 
Next, we prove an entropy sum-product statement:
For independent and identically distributed 
random variables $X,X'$,
the maximum of $\Eta(X+X')$ and $\Eta(XX')$ 
is bounded below by a linear combination
of the entropy and the 
min-entropy (R\'enyi entropy of order~$\infty$)
of~$X$. This result, obtained by bounding 
entropies of the form $\Eta\bigl( X(Y+Z)\bigr)$ from above and below, 
is valid over arbitrary fields $F$. Over $F=\RR$, a slightly
stronger inequality is derived.
Finally, a weak version of a purely Shannon-entropic
sum-product result is developed:
If the entropic additive doubling of a random 
variable $X$ over an arbitrary field is $O(1)$,
then its multiplicative doubling is at least 
proportional to $\Eta(X)$.
\vskip5pt
\noindent\textbf{Keywords.}\enspace Entropy, sumsets, sum-product phenomena.
\vskip5pt
\noindent\textbf{MSC2020 Classification.}\enspace 94A17, 11B13.
\end{abstract}

\vskip48pt

\section{Introduction}

There has been much interest of late in entropic analogues of 
additive-combinatorial inequalities,
spurred in part by the recent proof of Marton's conjecture in finite 
groups~\cite{pfr,martontorsion} and by recent progress towards 
the polynomial Freiman-Ruzsa conjecture in torsion-free 
groups~\cite{pfrimproved}. However, the idea 
of a correspondence between 
information-theoretic bounds for the entropy
and additive combinatorial inequalities 
dates back much further, see, e.g.,~\cite{ruzsa2009,tao2010,KM:14}.

A central object of study in additive combinatorics is the 
cardinality of {\em sumsets},
$$A+B = \{a + b \in G : a\in A, b\in B\},$$
where $A,B$ are finite subsets
(which may satisfy suitable conditions)
of an abelian group $G$.
If $G = F$ is a field, then one might also 
study the size of \textit{product sets},
$$AB = \{ a\cdot b\in F : a\in A, b\in B\}.$$
The \textit{(Shannon) entropy} of a 
discrete random variable $X$ taking values in a set $A$
is
$$\Eta(X) = -\sum_{x\in A} \pr(X = x)\log\pr(X = x),$$
where, throughout this paper, $\log$ denotes the binary logarithm,
and with the usual convention that $0\log 0 =0$.
The entropy $\Eta(X)$ naturally arises as the normalized logarithm 
of the volume of the {\em typical set} of outcomes of independent
copies $X_1,\ldots,X_n$ of $X$~\cite{cover:book2},
and thus estimates for the volumes of sumsets often have 
analogues for entropies of sums. For independent random 
variables $X$ and $Y$ taking values in an abelian group, the 
entropic version of 
the first problem mentioned above is the study
of how large the 
entropy $\Eta(X+Y)$ can be; likewise, when $X$ and $Y$ take values 
in a field we also consider the entropy $\Eta(XY)$.
An important special case is when $X$ and $Y = X'$ 
are identically distributed. There, it is natural to consider the
\textit{entropic additive doubling},
$\Eta(X+X') - \Eta(X)$, and 
the \textit{entropic multiplicative doubling},
$\Eta(XX') - \Eta(X)$.

An elementary lower bound for the size of a sumset $|A+B|$ 
when $A,B\subseteq \FF_p$ is the Cauchy--Davenport 
inequality~\cite{cauchy,davenport}
\begin{equation}
|A+B| \ge \min\{ p, |A|+|B|-1\}.
\label{eq:CD}
\end{equation}
In the spirit of the log-cardinality to entropy correspondence, 
it is natural to ask whether an entropic analogue of 
Cauchy--Davenport exists. A na\"ive translation 
of~\eqref{eq:CD}
suggests the bound
$$\Eta(X+Y) \ge
\min\Bigl\{\log p, \log\bigl(2^{\Eta(X)} + 2^{\Eta(Y)} - 1\bigr)\Bigr\}.$$
However, this does not hold in general, as the following example shows: 
Let $X$ be uniform on $\{0,1\}$ and let $X_n$ be the sum of $n$ 
independent copies of $X$, so that $X_n$ has
$\mathrm{Binomial}(n,1/2)$ distribution. 
Then, by Proposition~3.1 of~\cite{GKclt},
$$\Eta(X_n) = \frac{1}{2}\log{\Big(\frac{1}{2}\pi e n\Bigr)}  
+ o_{n\to\infty}(1),$$
as long as $p$ is large enough depending on $n$ so that torsion 
does not come into play.
Since $X_{2n}$ is distributed as $X_n + X_n'$, for independent copies $X_n$ and $X_n'$ of $X_n$, we have
$$\Eta(X_n + X_n') = \Eta(X_n) + {1\over 2} + o_{n\to\infty}(1).$$
This is essentially a torsion-free construction, as also noted 
in~\cite{tao2010} for torsion-free groups. It appears to be 
a fundamental obstacle when trying to obtain entropic analogues 
of lower bounds for sumsets in (essentially) torsion-free settings 
due to the existence of discrete Gaussian examples, which do 
not have natural analogues for sets. 

In view of the above comments, a more appropriate 
candidate for an entropic Cauchy--Davenport theorem
might be that
$$\Eta(X+Y) \ge
\min\left\{\log p, \log \left({2^{\Eta(X)} + 2^{\Eta(Y)} 
- 1\over \sqrt 2}\right)
\right\}$$
for independent $X,Y$ taking values in $\FF_p$ for a prime $p$.
In particular, when $Y = X'$ is an independent copy of $X$ 
this reduces to
\begin{equation}\begin{aligned}\label{eqiidCD}
\Eta(X+X') &\ge \min
\left\{\log p, \log \left({2^{\Eta(X)+1} - 1\over \sqrt 2}\right)\right\}\cr
&= \min\left\{\log p, H(X) + \frac{1}{2} - o_{p\to\infty}(1)\right\}.
\end{aligned}\end{equation}

Our first result, proved in Section~2, is a weaker form of \eqref{eqiidCD},
which asserts (under a mild quantitative assumption that forbids, e.g., 
deterministic random variables and random variables uniformly distributed
on the whole of $F$) that the entropic additive doubling of an $X$ taking 
values in a prime field $\FF_p$ must be at least $1/2-\eps$.

\begin{theorem}\label{thmFpEPI}
For any $0<\eps<1/2$ there exist $k_\eps, K_\eps \in \RR$ 
such that, for any prime $p$ with $\log p > k_\eps + K_\eps$ and 
any $\FF_p$-valued
random variable $X$ satisfying
$$k_\eps < \Eta(X) < \log p - K_\eps,$$
we have
$$\Eta(X + X') \ge \Eta(X) + {1\over 2} - \eps,$$
where $X'$ is an independent copy of $X$.
\end{theorem}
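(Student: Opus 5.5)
The plan is to transfer Tao's entropy power inequality for torsion-free groups to $\FF_p$, splitting into two regimes according to how the distribution of $X$ sits inside $\FF_p$. Tao's result, which we use as a black box, says: for every $\eps>0$ there is $k_\eps$ such that any $\ZZ$-valued (or torsion-free group valued) $X$ with $\Eta(X)>k_\eps$ satisfies $\Eta(X+X')\ge \Eta(X)+\frac12-\eps$. The heuristic is that the only way to fail to gain $\frac12$ over $\FF_p$ is either torsion (the distribution is close to uniform on all of $\FF_p$, excluded by $\Eta(X)<\log p-K_\eps$) or the distribution is essentially supported on a short progression, where it behaves like a $\ZZ$-valued variable and Tao applies.

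The argument proceeds by contradiction and compactness. Suppose the statement fails for some $\eps$. Then there are primes $p_n$ and $X_n$ with $\Eta(X_n)\to\infty$, $\log p_n-\Eta(X_n)\to\infty$, and $\Eta(X_n+X_n')<\Eta(X_n)+\frac12-\eps$. Small entropic doubling lets us invoke an entropic Freiman-type structure theorem, e.g.\ the entropic inverse results of Tao or the entropic Ruzsa/PFR machinery. This gives a structured model for $X_n$ up to $O(1)$ entropy. Since $\FF_p$ has no nontrivial subgroups and $\Eta(X_n)$ is far from $\log p_n$, the model is a random variable concentrated on a generalized arithmetic progression of bounded rank whose size is far below $p_n$.

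Next we apply a Freiman isomorphism of order $2$ (the standard rectification argument, valid because the progression is small compared to $p$ after dilation by a suitable $\lambda\in\FF_p^*$) to lift this portion of $X_n$ to a $\ZZ^d$-valued, hence torsion-free, random variable $\tilde X_n$. The lift must preserve the entropy of sums: $\Eta(\tilde X_n)=\Eta(X_n)$ and $\Eta(\tilde X_n+\tilde X_n')=\Eta(X_n+X_n')$. Tao's inequality in $\ZZ^d$ then gives the contradiction. To pass from ``most of the mass lies on the structured set'' to the exact variable, we use continuity: conditioning on a high-probability event changes both $\Eta(X)$ and $\Eta(X+X')$ by $o(1)$ plus terms controlled by $h(\delta)+\delta\log p$. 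This forces a quantitative version in which the leftover mass $\delta$ satisfies $\delta\log p=o(1)$.

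The main obstacle is exactly this last point: Freiman-type entropic inverse theorems control $X$ only up to $O(1)$ in entropy, not up to $o(1)$, and the mass outside the progression may be as large as a constant. My first attempt to get around it is to avoid exact structure. I would partition $\FF_p$ into $M$ long intervals, with $M$ depending on $\eps$, after a dilation chosen by averaging so that $X$ and $X+X'$ rarely wrap around. Each of $X$ and $X+X'$ then decomposes via the chain rule into the ``which interval'' part and a within-interval part. Tao's bound, applied to the within-interval conditional variables (which embed into $\ZZ$), supplies the $\frac12$. The hypothesis $\Eta(X)<\log p-K_\eps$ is what guarantees that a good dilation exists, by an averaging or Fourier argument over $\lambda$. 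Making this averaging step quantitative is where I expect most of the work.
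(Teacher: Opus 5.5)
You leave the one delicate step of your first route unresolved, and the fallback you propose for it does not work. The first route has the right skeleton: Tao's decomposition $X=U+Z$, the fact that the only subgroups of $\FF_p$ are $\{0\}$ and $\FF_p$, a Freiman isomorphism into $\ZZ^d$, then Tao's torsion-free inequality.

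\textbf{The missing idea.} The leftover mass never has to be $o(1/\log p)$; it only has to be small in terms of $\eps$, uniformly in $p$. Let $A$ be the set of atoms of $Z$ of mass at least $\delta$. Then $|A|\le 1/\delta$ and $\pr(Z\notin A)\le \Eta(Z)/\log(1/\delta)$. Let $X'$ have the law of $X$ conditioned on $\{Z\in A\}$. Do not use a two-sided continuity estimate. Instead, condition $X_1+X_2$ on the indicators $\one_{\{Z_1\in A\}}$ and $\one_{\{Z_2\in A\}}$. In the three cells where some $Z_i\notin A$, use only the one-sided bound that, by conditional independence, the entropy of the sum is at least the conditional entropy of one summand. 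Collecting terms gives
\[
\Eta(X_1+X_2)-\Eta(X)\;\ge\;\pr(Z\in A)^2\bigl(\Eta(X_1'+X_2')-\Eta(X')\bigr)-h_2\bigl(\pr(Z\notin A)\bigr),
\]
in which $p$ does not appear. So with $\delta=\delta(\eps)$ small enough, it suffices to show that $X'$ has entropic doubling at least $\frac12-\frac{\eps}{2}$; this one-sided conditioning is the key lemma in the paper's proof.

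Now $X'$ lives on $A+H+P$, which sits in a progression of rank $O_\eps(1)$ and size $O_\eps(2^{\Eta(X)})$. A small bounded-rank progression need not be proper, so ``standard rectification'' is not automatic. You need something like the Tao--Vu John-type theorem (after symmetrizing) to enlarge it to a $3$-proper progression of comparable size. This is exactly where $K_\eps$ enters: $\Eta(X)<\log p-K_\eps$ keeps that progression smaller than $\FF_p$, which forces $H=\{0\}$ and yields a Freiman isomorphism onto a box in $\ZZ^d$.

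\textbf{Why the interval-partition fallback fails.} The problems are not just quantitative. First, a dilation under which $X$ and $X+X'$ rarely wrap around need not exist under the hypothesis $\Eta(X)<\log p-K_\eps$. If $X$ is uniform on a random subset of $\FF_p$ of density $2^{-K_\eps-1}$, every dilate is spread almost evenly over all $M$ intervals. Such $X$ satisfy the conclusion easily, but seeing this already requires the structural dichotomy you were trying to avoid.

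Second, even with a good partition, the bookkeeping loses the coarse part. Let $J,J'$ be the interval indices of $X,X'$ and $w_j=\pr(J=j)$. Then $\Eta(X)=\Eta(J)+\sum_j w_j\Eta(X_j)$, while $\Eta(X+X')\ge \Eta(X+X'\given J,J')=\sum_{j,k}w_jw_k\Eta(X_j+X_k')$ throws away $\Eta(J)$, which can be as large as $\log M$. Recovering it through the interval index of $X+X'$ costs the entropy of the carries, which is of the same order as the $\frac12$ you want to gain.

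Third, the off-diagonal terms with $j\ne k$ have non-identically distributed summands. Tao's i.i.d.\ inequality says nothing about them, and the Ruzsa-distance argument yields a gain of only $\frac18$. So the gain of $\frac12$ cannot be assembled interval by interval.
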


This theorem should be compared with the statement that 
any $A\subseteq \FF_p$ satisfies $|A+A| \ge \min\bigl\{ p, 2|A|-1\bigr\}$,
by the Cauchy--Davenport inequality.
It can also be viewed as a prime-field analogue of the following
entropy-power-type inequality due to Tao over torsion-free 
groups. In fact, our proof uses Tao's result.

\begin{theorem}[{\rm\cite[Theorem~1.9]{tao2010}}\/]\label{thmtaoEPI}
For every $\eps>0$ there exists $k_\eps$ such that,
for any random variable $X$ taking values in a torsion-free group 
with $\Eta(X)>k_\eps$, we have
$$\Eta(X+X') \ge \Eta(X) + {1\over 2} - \eps,$$
where $X'$ is an independent copy of $X$.
\end{theorem}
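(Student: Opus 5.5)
The plan is to reduce to the integers and then compare with the continuous entropy power inequality $h(Y+Y')\ge h(Y)+\tfrac12$, which holds for i.i.d.\ real $Y$ with a density. The whole difficulty is controlling the discretization error.

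\emph{Reduction to $\ZZ$.} Since $H(X)$ can be approximated by truncating $X$ to a finite set, we may assume $X$ is finitely supported, at the cost of an arbitrarily small loss in both $H(X)$ and $H(X+X')$. A finitely generated torsion-free abelian group is isomorphic to $\ZZ^d$. A linear map $\ZZ^d\to\ZZ$ with rapidly growing coefficients is injective on the finite set $\mathrm{supp}(X)+\mathrm{supp}(X)$. It therefore preserves both $H(X)$ and $H(X+X')$, so we may take $X$ to be $\ZZ$-valued.

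\emph{Smoothing.} Let $U,U'$ be i.i.d.\ uniform on $[0,1)$, independent of $X,X'$. Then $h(X+U)=H(X)$ exactly, and the continuous inequality gives
$$h(X+X'+U+U')\ge H(X)+\tfrac12 .$$
On the other hand, conditioning on the fractional structure gives
$$h(X+X'+U+U')\le H(X+X') + h(U+U') - I,$$
where $I$ is a nonnegative mutual-information correction. This is too lossy as it stands, because $h(U+U')>0$. The fix is to work at a coarse scale. Take $L$ large and write $X=LQ+R$ with $R\in\{0,\dots,L-1\}$, and smooth with $U$ uniform on $[0,L)$ instead. Then the discretization error is governed by how far the law of $X$ is from being locally constant on blocks of length $L$. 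It is bounded by something like
$$\Eta(R\mid Q)\ \text{ versus }\ \log L,$$
and the analogous quantity for $X+X'$.

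\emph{Main step (the obstacle).} I would show that if $\Eta(X+X')<\Eta(X)+\tfrac12-\eps$ and $\Eta(X)$ is huge, then at some scale $L$ both $X$ and $X+X'$ are nearly uniform within blocks, meaning $\log L-\Eta(R\mid Q)=o(1)$. The argument is by contradiction.

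First, small doubling, together with Ruzsa-type and submodularity inequalities, forces every iterated sum $X_1+\dots+X_{2^k}$ to have entropy at most $\Eta(X)+(\tfrac12-\eps)k+O(1)$.

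Next, apply a chain rule over the dyadic scales $L=2^j$ for $j\le \Eta(X)/2$. The quantities $\log L-\Eta(R_j\mid Q_j)$ are monotone in $j$. Because the total entropy is large, a pigeonhole argument over many scales should produce one scale at which the local non-uniformity is below $\eps/10$. I expect the bookkeeping here to be the genuine difficulty: the required uniformity must hold for $X$ and for $X+X'$ simultaneously. To get this I would use the inequality $\Eta(X+X'+Y)-\Eta(X+X')\le \Eta(X+Y)-\Eta(X)$, with $Y$ uniform on a block, to transfer near-uniformity from $X$ to $X+X'$.

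\emph{Conclusion.} At the chosen scale, rescale by $L$. The continuous inequality then yields
$$\Eta(X+X')\ge \Eta(X)+\tfrac12-\eps,$$
which contradicts the assumption and proves the theorem, with $k_\eps$ chosen large enough that the pigeonhole over scales succeeds.
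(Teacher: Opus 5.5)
You should know first that the paper does not prove this statement. It is quoted from Tao, and the proof of \Cref{thmFpEPI} in Section~2 reuses the first half of Tao's argument. Your preliminary reductions are fine, and so is the smoothing comparison with the continuous entropy power inequality. The gap is in your main step, and it is not the bookkeeping you flag.

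Transferring smoothness from $X$ to $X+X'$ is easy. The comparison only needs $X+X'$ to be smooth at the working scale: for instance, total variation distance between $X+X'$ and $X+X'+1$ tending to $0$ already makes the discretization loss $o(1)$. Convolution cannot increase the total variation distance between a variable and its unit shift, so this is inherited from $X$. The real problem is extracting \emph{any} large-scale uniformity of $X$ from $\Eta(X+X')<\Eta(X)+\frac12-\eps$, and your main step does not do it.

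\begin{enumerate}
\item The iterated-sum bound does not follow from Ruzsa-type or submodularity inequalities. These give only concavity, $\Eta(S_{n+1})+\Eta(S_{n-1})\le 2\Eta(S_n)$ for $S_n=X_1+\cdots+X_n$. Hence they yield only $\Eta(S_{2^k})\le \Eta(X)+(2^k-1)(\frac12-\eps)$, which is exponential in $k$. Entropic doubling is also not monotone under self-convolution: for $X$ uniform on $\{0,1\}$ the successive increments are $\frac12$ and then about $0.53$.
\item A pigeonhole over scales cannot work either. With $L=2^j$, the nonuniformity $\log L-\Eta(R_j\given Q_j)=j+\Eta(Q_j)-\Eta(X)$ is nondecreasing in $j$ and vanishes at $j=0$. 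So the good scales form an initial segment, and the whole content is that this quantity stays small up to some large $j$. Nothing in your sketch supplies that.
\end{enumerate}

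Moreover, after a generic embedding into $\ZZ$, no good large scale need exist even when the doubling is close to $\frac12$. Take the $\ZZ^2$-valued variable $(B,G)$, with $B$ Bernoulli$(\eta)$ and $G$ an independent, finitely truncated discrete Gaussian of large variance. Its doubling is $\frac12+o(1)$ as $\eta\to0$ and the variance grows. Let $M$ be a large power of $2$.
\begin{itemize}
\item Under the embedding $B+MG$, the nonuniformity is at least $1-h_2(\eta)$ at every dyadic scale $L\ge 2$.
\item Under the embedding $G+MB$, it is small at all scales well below the standard deviation of $G$.
\end{itemize}
So the embedding, or the direction of smoothing, has to be chosen from the structure of $X$.

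That structure is the missing ingredient. It is the entropic inverse theorem, \Cref{proptaofivetwo}: $X=U+Z$ with $\Eta(Z)=O(1)$ and $U$ uniform on a bounded-rank progression of size $O(2^{\Eta(X)})$; here $H=\{0\}$ because the group is torsion-free. The remaining steps are:
\begin{itemize}
\item condition on $Z\in A$ (\Cref{lemZsupport});
\item enlarge to a proper progression (\Cref{thmtorsionjohn});
\item pass to a box in $\ZZ^d$ (Lemmas~\ref{lemproper} and~\ref{lemfreiman}).
\end{itemize}
This reduces matters to a mixture of boundedly many translates of a uniform distribution on a large progression. Such a variable is smooth along a long direction. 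Only at that point can a discretized comparison with the continuous inequality, of the kind you describe, close the argument.
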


In Section~3, we first obtain a lower 
bound on entropies of the form $H(X(Y+Z))$, 
where $X,Y,Z$ are independent random variables taking 
values in some field $F$. 
This lower bound, given in \Cref{lemHminlowerbound},
also involves
the \textit{min-entropy}, also known as the \textit{R{\'e}nyi entropy 
of order~$\infty$}, of a discrete random variable $X$ taking values
in a set $A\subseteq F$, defined as:
$$\Etamin(X) = \min_{x\in A} \log\biggl( \frac{1}{\pr(X = x)}\biggr) = 
-\log \max_{x\in A} \pr(X=x).$$
Then, combining \Cref{lemHminlowerbound} with a recent upper 
bound for $\Eta\bigl(X(Y+Z))$ by M\'ath\'e and O'Regan~\cite{matheoregan}, 
we obtain the following entropic sum-product statement.

\begin{theorem}\label{thmminentropy}
There exist absolute constants $K_1$ and $K_2$ such that the following holds. Let $X$ be a finitely supported
random variable taking values in a field $F$. If $F$ has characteristic 
$p > 0$, assume further that $\Etamin(X) \leq (2/3)\log p - K_1 $. Then
$$ \max\left\{ \Eta(X+X'), \Eta( XX') \right\} \ge 
\frac{4 \Eta(X) + 3\Etamin(X)}{6} - K_2,$$
where $X'$ is an independent copy of $X$.
\end{theorem}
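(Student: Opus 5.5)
The plan is to sandwich $\Eta\bigl(X(Y+Z)\bigr)$, where $X,Y,Z$ are independent copies of the given random variable. The lower bound will come from \Cref{lemHminlowerbound}. The upper bound will be the M\'ath\'e--O'Regan estimate, which controls this quantity by the additive and multiplicative entropic doublings.

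\textbf{Step 1 (lower bound).} We first apply \Cref{lemHminlowerbound} with $X,Y,Z$ i.i.d. I expect it to give an inequality of the form
$$\Eta\bigl(X(Y+Z)\bigr) \ge \Eta(X) + \Etamin(X) - O(1)$$
or something close to it. The heuristic is a point-line incidence or collision count. Consider the map $(x,y,z)\mapsto x(y+z)$. Its collision probability is bounded by counting solutions of $x(y+z)=x'(y'+z')$. For fixed $(x,x')$ with $x\neq x'$, the pair $(y+z,\,y'+z')$ lies on a line. Bounding the probability mass on such a line is where the max-atom $2^{-\Etamin(X)}$ enters. In positive characteristic, the condition $\Etamin(X)\le (2/3)\log p - K_1$ is exactly what keeps the incidence bound (a Stevens--de Zeeuw type bound over $\FF_p$) in its nontrivial range. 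I will take the lemma as given, with whatever precise linear combination it yields.

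\textbf{Step 2 (upper bound).} Next I invoke M\'ath\'e and O'Regan~\cite{matheoregan}. Their result should have the form
$$\Eta\bigl(X(Y+Z)\bigr) \le a\,\Eta(X+X') + b\,\Eta(XX') - c\,\Eta(X) + O(1).$$
It is an entropic analogue of the Pl\"unnecke--Ruzsa-type bound $|A(A+A)|\lesssim |A+A|^{a}|AA|^{b}/|A|^{c}$. It is proved via submodularity, i.e.\ entropic Ruzsa triangle inequalities applied to the maps $(x,y,z)\mapsto xy+xz$, using that $XY$ and $XZ$ are conditionally related through $X$.

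\textbf{Step 3 (combine).} Combining the two bounds gives
$$(a+b)\max\{\Eta(X+X'),\Eta(XX')\} \ge (1+c)\Eta(X) + \Etamin(X) - O(1).$$
Dividing by $a+b$, the constants should produce the coefficients $4/6$ and $3/6$. For instance, with $a+b=2$ this requires the lower bound to carry weight $3/2$ on $\Etamin$, which would mean the bound in Step 1 reads $\tfrac43\Eta+\tfrac32\Etamin$ after adjustment. I would therefore calibrate Steps 1 and 2 against each other rather than trust the guessed coefficients above.

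The main obstacle is Step 1, and in particular getting the correct weight on $\Etamin$ together with the $(2/3)\log p$ threshold in finite characteristic. Step 2 is quoted from~\cite{matheoregan}, and Step 3 is algebra.
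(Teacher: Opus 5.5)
Your architecture matches the paper's: you sandwich $\Eta\bigl(X(Y+Z)\bigr)$ for i.i.d.\ $X,Y,Z$ between \Cref{lemHminlowerbound} and the M\'ath\'e--O'Regan bound. Your calibration, however, is off. \Cref{lemHminlowerbound} gives only $\Eta\bigl(X(Y+Z)\bigr)\ge\frac32 m-O(1)$ with $m=\lfloor\Etamin(X)\rfloor$; the floor is there because the lemma needs $2^m\in\ZZ$. There is no $\Eta(X)$ term on the lower side, and Shannon entropy enters only through the upper bound. The lemma comes from a point--plane incidence bound in $F^3$ applied to the uniform pieces of a Chor--Goldreich decomposition, not from a point--line count, but this is harmless since you take it as given. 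The constants that work are $a=1$, $b=2$, $c=2$. Then $\frac32 m\le 2\Eta(XX')+\Eta(X+X')-2\Eta(X)$ gives $3\max\{\Eta(X+X'),\Eta(XX')\}\ge 2\Eta(X)+\frac32\Etamin(X)-O(1)$, which is the claim.

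The genuine gap is Step~2. With these constants, an upper bound that is linear in the entropies up to $O(1)$ is false. The submodularity argument uses that both $(X,Y+Z)$ and $(XY,XZ)$ determine $X(Y+Z)$, and it also needs $(X,Y+Z,XY,XZ)$ to determine $(X,Y,Z)$. That last step fails on $\{X=0\}$. The correct i.i.d.\ bound, \Cref{lemMO}, is
\[ \Eta\bigl(X(Y+Z)\bigr)\le 2\Eta(XY)-q\bigl(2\Eta(X)-\Eta(X+Y)\bigr),\qquad q=\pr(X\ne0). \]
The uncorrected form really fails. Take $X=0$ with probability $3/4$ and otherwise uniform on $\{1,\dots,N\}$. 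Conditioning on which of $X,Y,Z$ vanish, and using $\Eta(UU')\ge 2\log N-O(\log\log N)$ for independent uniforms on $\{1,\dots,N\}$, one gets $\Eta\bigl(X(Y+Z)\bigr)\ge\frac{7}{32}\log N-O(\log\log N)$. Meanwhile $2\Eta(XY)+\Eta(X+Y)-2\Eta(X)\le\frac{3}{16}\log N+O(1)$. This is exactly the error flagged in the paper's revision note.

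With the correct bound, the sandwich only yields $\max\{\Eta(X+X'),\Eta(XX')\}\ge\frac{4q\Eta(X)+3\Etamin(X)}{6}-O(1)$. That falls short of the claim by $\frac23(1-q)\Eta(X)$, which is unbounded in general. The missing idea is the paper's case split:
\begin{itemize}
\item If $2\Eta(X)\ge 3\Etamin(X)$, the theorem is trivial, since $\Eta(X+X')\ge\Eta(X)\ge\frac23\Eta(X)+\frac12\Etamin(X)$.
\item Otherwise $\pr(X=0)\le 2^{-\Etamin(X)}<2^{-2\Eta(X)/3}$, so $(1-q)\Eta(X)\le\Eta(X)2^{-2\Eta(X)/3}=O(1)$, and the factor $q$ costs only a constant.
\end{itemize}
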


In the case $F=\RR$, a slightly better bound can be obtained.

\begin{theorem}\label{thmminentropyreal}
There exists an absolute constant $K_3$ such that the following holds. 
If $X$ is a finitely supported real-valued random variable with 
$\pr(X=0) = 0$, then:
$$\max\left\{ \Eta(X+X'), \Eta(XX')\right\}
\ge \frac{4\Eta(X) + 2\Etamin(X)}{5} - K_3 \log\Etamin(X) .$$
\end{theorem}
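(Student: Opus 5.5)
The plan has the same shape as the argument for \Cref{thmminentropy}. We bound an entropy of the form $\Eta\bigl(X(Y+Z)\bigr)$ from above in terms of $\Eta(X+X')$ and $\Eta(XX')$, and from below in terms of $\Eta(X)$ and $\Etamin(X)$, using independent copies $X,Y,Z$ of $X$. Over $\RR$ the lower bound can be made sharper, because we can use a Szemer\'edi--Trotter-type incidence bound (or the Elekes/Solymosi-style geometric input) in place of the purely algebraic counting valid over arbitrary fields. The price of the geometry is the logarithmic loss $K_3\log\Etamin(X)$, which comes from dyadic pigeonholing.

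For the upper bound I will use the M\'ath\'e--O'Regan inequality as in \Cref{thmminentropy}, in a form such as
$$\Eta\bigl(X(Y+Z)\bigr) \le a\,\Eta(X+X') + b\,\Eta(XX') - c\,\Eta(X) + O(1).$$
This is obtained by writing $X(Y+Z)=XY+XZ$ and applying the Ruzsa-type entropic submodularity and sumset inequalities to the sum $XY + XZ$ of dependent products. With $M=\max\{\Eta(X+X'),\Eta(XX')\}$, this gives $\Eta\bigl(X(Y+Z)\bigr)\le (a+b)M - c\,\Eta(X)+O(1)$.

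For the lower bound over $\RR$, I would first reduce to near-uniform pieces. Split the support of $X$ into dyadic level sets $A_j=\{x:\pr(X=x)\in(2^{-j-1},2^{-j}]\}$. Only $j\ge\Etamin(X)$ occur, and levels with $j\gtrsim 2\Eta(X)$ carry negligible mass. After discarding these, there are $O(\Etamin(X))$ relevant levels, so conditioning on the level costs only $O(\log\Etamin(X))$ in entropy. For each $x$ in the range, I would count the representations $x=a(b+c)$, viewed as incidences between points $(b,\,x/a)$, or dually lines $y=a(t+c)$, in $\RR^2$. The Szemer\'edi--Trotter theorem then bounds the collision probability $\pr\bigl(X(Y+Z)=w\bigr)$ better than the trivial estimate. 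This requires $\pr(X=0)=0$, so that division by $a$ is legitimate. Converting the collision bound into an entropy lower bound via $\Eta\ge\Eta_2$, together with the inequality $\Eta_2\gtrsim\Etamin$, should give something like
$$\Eta\bigl(X(Y+Z)\bigr)\ \ge\ \tfrac{1}{2}\Eta(X)+\Etamin(X) - O(\log\Etamin(X)).$$
The exact coefficients would be tuned so that, combined with the upper bound, solving for $M$ yields $(4\Eta(X)+2\Etamin(X))/5$.

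The main obstacle is the lower bound. Incidence bounds are about sets, not distributions, so the mixture of sets (levels) must be handled carefully. The difficulty is that the min-entropy controls only the heaviest atom, while the Shannon entropy controls the typical level. I expect the delicate part to be choosing which levels of $X$, $Y$, $Z$ to pair so that both $\Eta(X)$ and $\Etamin(X)$ enter with the right weights, while paying only a $\log\Etamin(X)$ loss from the union over $O(\Etamin(X))^3$ level triples. I would first try restricting $Y,Z$ to a single popular level and keeping $X$ arbitrary.
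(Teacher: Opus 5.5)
There is a genuine gap. With the tools you name, the three-variable expander $X(Y+Z)$ cannot produce the coefficients in the statement. When $\pr(X=0)=0$, the M\'ath\'e--O'Regan bound (\Cref{lemMO}) reads $\Eta(X(Y+Z))\le 2\Eta(XY)+\Eta(X+Y)-2\Eta(X)\le 3M-2\Eta(X)$. Its coefficients are fixed, so nothing can be ``tuned''. To conclude $M\ge(4\Eta(X)+2\Etamin(X))/5$ from it, you would need $\Eta(X(Y+Z))\ge(2\Eta(X)+6\Etamin(X))/5$ up to the logarithmic error. For $X$ uniform on $A$ this amounts to $|A(A+A)|\ge|A|^{8/5-o(1)}$. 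Szemer\'edi--Trotter gives only the exponent $3/2$ for this expander, which is the same exponent the point--plane bound of \Cref{lemkohetal} already gives over any field, so working over $\RR$ gains nothing here. Even the best known exponent, $32/21$, is below $8/5$.

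Concretely, your target $\tfrac12\Eta(X)+\Etamin(X)$ yields only $M\ge(5\Eta(X)+2\Etamin(X))/6$. For uniform $X$ this is $\tfrac76\Eta(X)$. It beats the claimed bound exactly when $\Eta(X)\ge 2\Etamin(X)$, but in that regime the theorem is trivial, since $\Eta(X+X')\ge\Eta(X)\ge(4\Eta(X)+2\Etamin(X))/5$. With the lower bound $\tfrac32\Etamin(X)$ that is actually provable (\Cref{lemHminlowerbound}), you simply recover \Cref{thmminentropy}.

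The missing idea is to switch to the four-variable expander $V=(X+Y)(Z+W)$, with $X,Y,Z,W$ independent copies.

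\emph{Lower bound.} The genuinely real input is the Roche-Newton--Rudnev energy estimate (\Cref{proproru}, which rests on Guth--Katz). Take $P=S^2$ with $S=T\cup(-T)$ and $T=A\cup B\cup C\cup D$. The estimate bounds the number of collisions of $(U_A+U_B)(U_C+U_D)$ by $O(|A|^6\log|A|)$, giving an essentially optimal collision entropy $2\log|A|-O(\log\log|A|)$. The Chor--Goldreich decomposition (\Cref{lemconvexcomb}) then gives $\Eta(V)\ge 2m-O(\log m)$ with $m=\lfloor\Etamin(X)\rfloor$. The logarithmic loss comes from this energy bound, not from dyadic pigeonholing.

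\emph{Upper bound.} On $\{XZ\ne0\}$, both $(X+Y,Z+W)$ and $(XZ,XW,YZ)$ determine $V$, because $YW=(XW)(YZ)/(XZ)$. Together they recover $(X,Y,Z,W)$ unless $X+Y=Z+W=0$. Submodularity then yields $\Eta(V)\le 2\Eta(X+Y)+3\Eta(XY)-4\Eta(X)+1$ (\Cref{lempohoata}). This is where the hypothesis $\pr(X=0)=0$ is used, not in the incidence count.

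Combining the two bounds gives $2m-O(\log m)\le 5M-4\Eta(X)+1$, which is the theorem.
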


Since $\Eta(X) = \Etamin(X) = \log|A|$ when $X$ is uniformly
distributed on $A$, Theorems~\ref{thmminentropy} and~\ref{thmminentropyreal}
immediately give an exponent of $6/5$ for the classical 
Erd\H{o}s--Szemer\'edi problem in $\RR$ and an exponent 
of $7/6$ for the corresponding problem over all other fields.

\begin{corollary}\label{corclassicalsumproduct}
Let $K_2$ and $K_3$ be the absolute constants from Theorems~\ref{thmminentropy} and~\ref{thmminentropyreal}, respectively.
Let $A$ be a finite subset of a field $F$. If $F$ has characteristic $p>0$, assume that $|A|\ll p^{2/3}$. Then
$$\max\left\{ |A+A|, |A\cdot A|\right\}\ge \frac{|A|^{7/6}}{2^{K_2}}.$$
If $F = \RR$, this bound can be improved to
$$\max\left\{ |A+A|, |A\cdot A|\right\} \ge \frac{|A|^{6/5}}{(\log|A|)^{K_3}}.$$
\end{corollary}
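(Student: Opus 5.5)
Take $X$ uniformly distributed on $A$, and let $X'$ be an independent copy. Then $\Eta(X)=\Etamin(X)=\log|A|$. Since $X+X'$ is supported on $A+A$ and $XX'$ on $A\cdot A$, and entropy is at most the log-cardinality of the support, we have
$$\Eta(X+X')\le \log|A+A| \quad\text{and}\quad \Eta(XX')\le\log|A\cdot A|.$$
Consequently
$$\log\max\{|A+A|,|A\cdot A|\}\ \ge\ \max\{\Eta(X+X'),\Eta(XX')\},$$
and the corollary reduces to bounding the right-hand side with \Cref{thmminentropy} and \Cref{thmminentropyreal}.

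For a general field, substitute $\Eta(X)=\Etamin(X)=\log|A|$ into \Cref{thmminentropy}. This gives the lower bound $(4+3)\log|A|/6-K_2=\tfrac76\log|A|-K_2$. Exponentiating yields $\max\{|A+A|,|A\cdot A|\}\ge |A|^{7/6}/2^{K_2}$. In positive characteristic we must check the hypothesis $\Etamin(X)=\log|A|\le(2/3)\log p-K_1$, which is $|A|\le 2^{-K_1}p^{2/3}$. This is precisely how the assumption $|A|\ll p^{2/3}$ is to be read, with implied constant $2^{-K_1}$.

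For $F=\RR$, \Cref{thmminentropyreal} requires $\pr(X=0)=0$, i.e.\ $0\notin A$. This is the only real obstacle. If $0\in A$, set $A^*=A\setminus\{0\}$ and apply the argument to $A^*$. Then $A^*+A^*\subseteq A+A$ and $A^*A^*\subseteq A\cdot A$, so
$$\max\{|A+A|,|A\cdot A|\}\ \ge\ 2^{-K_3\log\log|A^*|}\,|A^*|^{6/5}.$$
Here $|A^*|=|A|-1\ge |A|/2$ for $|A|\ge2$, so the loss is a factor $2^{-6/5}$. With uniform $X$ on $A^*$, the theorem gives
$$\tfrac{6}{5}\log|A^*|-K_3\log\log|A^*|,$$
which exponentiates to $|A^*|^{6/5}/(\log|A^*|)^{K_3}$.

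It remains to absorb the constant factor and the degenerate small cases. For $|A|=1$, note $\log\log|A|$ is undefined; for $|A|$ below any fixed threshold, $\max\{|A+A|,|A\cdot A|\}\ge|A|$ already suffices once the constant is adjusted. These are handled by enlarging $K_3$ if necessary (or reading the bound for $|A|$ large), since $(\log|A|)^{K_3}$ grows. With that, the corollary follows directly from the two theorems.
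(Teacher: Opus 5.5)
Your proposal is correct and is essentially the paper's proof: take $X$ uniform on $A$, bound $\Eta(X+X')\le\log|A+A|$ and $\Eta(XX')\le\log|A\cdot A|$, and apply \Cref{thmminentropy} with $\Eta(X)=\Etamin(X)=\log|A|$. In the real case you delete $0$ from $A$ before invoking \Cref{thmminentropyreal}, exactly as the paper does; you are more explicit than the paper in reading $|A|\ll p^{2/3}$ as $|A|\le 2^{-K_1}p^{2/3}$ and in tracking the constant-factor loss from removing $0$, which (as in the paper) is absorbed by a harmless adjustment of constants rather than keeping $K_3$ literally unchanged.
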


\begin{proof}
Let $X$ and $X'$ be independent, uniformly distributed 
random variables on $A$, so that
$\Eta(X) = \Etamin(X) = \log|A|.$
Since $X+X'$ takes values in $A+A$ and $XX'$ takes values in $A\cdot A$, 
by \Cref{thmminentropy}
we have
$$\log\max\left\{ |A+A|, |A\cdot A|\right\} \ge \frac76 \log |A|.$$
The improvement to the exponent in $\RR$ follows similarly by removing 
$0$ from $A$ (which decreases $|A|$ but to a negligible degree) 
and then invoking \Cref{thmminentropyreal} instead.
\end{proof}

For most common choices of $F$, there are better exponents known than those furnished by \Cref{corclassicalsumproduct}.
Ignoring logarithmic factors, the current best exponent for $\RR$, 
namely $4/3 + 2/951$, is due to Bloom~\cite{bloom2025}, 
and the best exponent for $\FF_p$ is $5/4$, due to Mohammadi and Stevens. 
When $F = \FF_q$ with $q=p^n$ for some prime $p$ and large $n$, 
the condition $|A|\ll p^\delta$ becomes very restrictive. 
In this setting one seeks a statement that allows for larger 
sets $A$ while still avoiding subfields of $F$;
then the best known exponent is $12/11$, due to Li 
and Roche-Newton~\cite{LiRocheNewton}.
And for the skew field of quaternions,
Basit and Lund~\cite{basitlund2019}
obtain the exponent $4/3 + c$ for 
some unspecified $c>0$,
which in turn implies the same exponent for the complex field $\CC$.

On the other hand, the appearance of min-entropy in the 
lower bound is somewhat unsatisfying. Indeed, it is conjectured in~\cite{goh2024} that the min-entropy terms 
above may be replaced by Shannon entropy, possibly to 
the detriment of the coefficient $7/6$. 
In particular,~\cite[Conjecture~18]{goh2024} states that there is a
$\delta>0$ such that, for any finitely supported
random variable $X$ with values in $\RR$,
\begin{equation}
\max\left\{\Eta(X+X'), \Eta(XX') \right\} \ge 
\bigl(1+\delta-o(1)\bigr) \Eta(X),
\label{eq:conjecture}
\end{equation}
where $X'$ is an independent copy of $X$ and
$o(1)$ denotes a term going to $0$ as $\Eta(X)$ goes to infinity.
Although this conjecture remains open, it
was noted in~\cite{lgk2025} that, if~\eqref{eq:conjecture}
is true, one must have $\delta \le 1/3$, which is strictly smaller than the corresponding exponent obtained by Bloom in the combinatorial sum-product problem. This naturally leads to the question of whether a sum-product phenomenon for entropy exists and, if so, what form it takes. 
 
Our next result, proved in Section~4, establishes weaker
versions of the conjectures in~\cite{goh2024}.
It states that, if the entropic additive doubling of a random variable is 
\textit{constant}, then its entropic multiplicative doubling must 
be substantial.
The proof uses \Cref{thmminentropy} to derive a sum-product inequality 
involving only Shannon entropy, since the constant doubling assumption 
implies sufficient structure to compare Shannon entropy with min-entropy. 

\begin{theorem}\label{thmweaksumproduct}
Let $F$ be a field, let $X$ be a random variable taking values 
in a finite subset of $F$, let $X_1$ and $X_2$ be independent copies of $X$,
and let $C,\epsilon>0$. 
There exists a constant $K_{C,\eps}$ depending
on $C$ and $\epsilon$ such that
the following holds. 
If
$$\Eta(X_1+X_2) \leq \Eta(X) + C,$$
and also assuming that 
$\Eta(X) \le (2/3)\log p - K_{C,\eps}$ if 
$F$ has positive characteristic $p$, 
then
$$\Eta(X_1 X_2) \geq \biggl(\frac{7}{6}-\epsilon \biggr)\Eta(X),$$
provided that $\Eta(X)$ is large enough depending on $C$ and $\epsilon$.
\end{theorem}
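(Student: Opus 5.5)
The idea is to reduce to \Cref{thmminentropy} by using the small-doubling hypothesis to find a random variable $Y$ with two properties. First, $\Etamin(Y) \ge \Eta(X) - O_{C}(1)$, or at least $\Etamin(Y)\ge(1-\eps')\Eta(X)$. Second, $\Eta(YY')$ and $\Eta(Y+Y')$ are controlled by $\Eta(X_1X_2)$ and $\Eta(X)$ up to $O_{C,\eps}(1)$ errors, or errors that are small multiples of $\Eta(X)$. The natural candidate is the conditioning of $X$ on a ``typical'' level set. Partition the support of $X$ into dyadic level sets $A_j=\{x: 2^{-j-1}<\pr(X=x)\le 2^{-j}\}$.

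The first step is a concentration statement: if $\Eta(X_1+X_2)\le \Eta(X)+C$, then most of the mass of $X$ sits on levels $j$ with $j=\Eta(X)+O_{C,\eps}(1)$, or at least $j\ge(1-\eps)\Eta(X)$. I would get this from the entropic Pl\"unnecke--Ruzsa/Balog--Szemer\'edi--Gowers machinery, or from Tao's entropic inverse theorem. Small entropic doubling implies that $X$ is $O_C(1)$-close in the appropriate sense to a uniform distribution on an approximate group $H$ with $\log|H| = \Eta(X)+O_C(1)$. Concretely, there is a set $H$ with $|H+H|\le K_C|H|$ and $\pr(X\in x_0+H)$ bounded below, and $\Eta(X)\ge \log|H|-O_C(1)$.

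Failing the full inverse theorem, a softer route is to write $\Eta(X)=\sum_j \pr(X\in A_j)\,\bigl(j+O(1)\bigr)+\Eta(J)$, where $J$ is the level index. Then $\Eta(X)$ is a mix of levels. Conditioning on $J=j$ for a level with $\pr(J=j)\ge 1/\Eta(X)^2$, say, yields $Y=X\mid J=j$ with $\Eta(Y)\approx\Etamin(Y)\approx \log|A_j|$. The loss from conditioning is about $2\log \Eta(X)$, which is $o(\Eta(X))$ and hence acceptable.

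The second step is to transfer sum and product bounds to $Y$. Conditioning on an event of probability $q$ costs at most $\log(1/q)$ in the relevant entropy comparisons. The tool is a Ruzsa-distance style inequality, $\Eta(Y_1Y_2)\le \Eta(X_1X_2)+O(\log(1/q))$ for independent copies of $Y$. For example, write $X_1X_2$ as a mixture and use $\Eta(Z\mid E)\le (\Eta(Z)+1)/\pr(E)$, or rather the multiplicative Ruzsa triangle inequality together with the entropic Pl\"unnecke-type bounds. Multiplicatively I also have to handle $0$, but $0$ carries at most one atom and can be removed. Apply \Cref{thmminentropy} to $Y$, using the characteristic condition $\Etamin(Y)\le\Eta(X)\le(2/3)\log p-K$. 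This gives $\max\{\Eta(Y+Y'),\Eta(YY')\}\ge \tfrac{7}{6}\Etamin(Y)-K_2$.

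The third step is to rule out the sum branch. We have $\Eta(Y+Y')\le \Eta(X)+C+O(\log(1/q))$, and this is smaller than $\tfrac76(1-\eps')\Eta(X)-K_2$ once $\Eta(X)$ is large. So the product branch must hold, and $\Eta(X_1X_2)\ge(\tfrac76-\eps)\Eta(X)$.

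The main obstacle is the first step: obtaining a level $j$ that carries non-negligible mass $q$ (with $\log(1/q)=o(\Eta(X))$) and has $j\ge(1-\eps')\Eta(X)$. A priori, $X$ could put mass $1/2$ on a few heavy atoms. Small doubling should forbid this, because heavy atoms $x$ force $\Eta(X_1+X_2)\ge$ roughly the entropy of $x+X_2$ plus the spread contributed by the rest. I would first try to show directly that if a set $S$ with $\log|S|\le(1-\eps')\Eta(X)$ carries mass $\delta$, then the entropy of $X_1+X_2$ exceeds $\Eta(X)+C$ unless $\delta=O_{C,\eps}(1/\Eta(X))$. If this direct argument fails, I would invoke the entropic inverse theorem for small doubling.
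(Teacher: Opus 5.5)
Your architecture matches the paper's: condition $X$ on an event so that the conditioned variable has min-entropy close to $\Eta(X)$, apply \Cref{thmminentropy}, kill the sum branch using the additive hypothesis, and transfer the product bound back. However, your transfer step is false as stated, and this makes you aim at the wrong target in the first step. The claim $\Eta(Y_1Y_2)\le\Eta(X_1X_2)+O(\log(1/q))$, for $Y$ distributed as $X$ conditioned on an event of probability $q$, is the set intuition $B\subseteq A\Rightarrow|BB|\le|AA|$, and it does not hold for Shannon entropy. For example, let $X=1$ with probability $1-q$ and otherwise be uniform on the first $2^n$ primes, and condition on $X\ne1$. By unique factorisation $\Eta(Y_1Y_2)\ge 2n-1$, whereas $\Eta(X_1X_2)\le\log 3+2qn$. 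In general the loss is multiplicative: $\Eta(X_1X_2)\ge\Eta(X_1X_2\given\one_{E_1},\one_{E_2})\ge q^2\,\Eta(Y_1Y_2)$, which is all your bound $\Eta(Z\given E)\le(\Eta(Z)+1)/\pr(E)$ gives. Likewise, on the additive side the argument of \Cref{lemZsupport} yields only $\Eta(Y_1+Y_2)-\Eta(Y)\le\bigl(\Eta(X_1+X_2)-\Eta(X\given\one_E)\bigr)/q^2$, together with $\Eta(Y)\le\Eta(X)/q$. Hence an event of mass $q$ with merely $\log(1/q)=o(\Eta(X))$ is useless. Your pigeonhole ``softer route'' with $q\ge1/\Eta(X)^2$ cannot work, and neither, by itself, can an inverse theorem giving only that $\pr(X\in x_0+H)$ is bounded below. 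What you need is an event of probability $q\ge1-O(\eps)$, so that $\frac76q^2\ge\frac76-\eps$ and $1/q<\frac76$.

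Such an event is exactly what the paper extracts from the full decomposition in \Cref{proptaofivetwo}. Write $X=U+Z$ with $U$ uniform on a coset progression $H+P$ and $\Eta(Z)\le C'=O_C(1)$, so that $\Eta(X)-C'\le\log|H+P|\le\Eta(X)+C'$. Put $A=\{z:\pr(Z=z)\ge\delta\}$ with $\delta=2^{-2C'/\eps}$, so that $\pr(Z\in A)\ge1-C'/\log(1/\delta)=1-\eps/2$, and let $X'$ be $X$ conditioned on $\{Z\in A\}$. Since $|A|\le1/\delta$, every atom of $X'$ has mass $O_{C,\eps}(2^{-\Eta(X)})$, so $\Etamin(X')\ge\Eta(X)-O_{C,\eps}(1)$. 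Since $X'$ is supported on $H+P+A$, also $\Etamin(X')\le\Eta(X)+C'+\log(1/\delta)$. This upper bound is what verifies the characteristic hypothesis of \Cref{thmminentropy} and determines $K_{C,\eps}$; your assertion $\Etamin(Y)\le\Eta(X)$ is not automatic for a conditioned variable. The paper then rules out the sum branch with the additive part of \Cref{lemZsupport}, and transfers the product bound back through its multiplicative part, using \Cref{propmaxprob} to control $\pr(X\ne0)$. With $q$ this close to $1$, your cruder bound $\Eta(X_1X_2)\ge q^2\,\Eta(X_1'X_2')$ would also suffice. With these repairs your outline becomes a correct proof.
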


The coefficient $7/6$ here is an artefact of the coefficients 
of \Cref{thmminentropy}; in view of \Cref{thmminentropyreal}, the 
coefficient can be improved to $6/5$ in the case $F=\RR$.

\paragraph{Definitions and notation}
As mentioned, all logarithms and entropies are taken to base~2; 
up to trivial rescaling, all our results remain valid for any choice 
of base. We write $X = O(Y)$ or $X\ll Y$ to mean that $X \le CY$ for 
some absolute constant $C$, in which case we also 
equivalently write $Y\gg X$ or $Y = \Omega(X)$. 
If $C$ is not an absolute constant but instead depends on some parameters, 
we include these parameters in subscript, writing, e.g., $X = O_Z(Y)$. 
The notation $o_{n\to\infty}(1)$ indicates a quantity that 
goes to $0$ as the parameter $n$ goes to infinity, although
the subscript may be omitted when it is clear from context.

A {\it coset progression} in an additive group $G$ is a sumset $H+P$ where $H$ is
a finite subgroup of $G$ and $P$ is a generalised arithmetic progression, i.e., a set of
the form
$$ P = \bigl\{a + n_1 r_1 + \cdots + n_d r_d : n_1 \in [0, N_1) ,\ldots, n_d\in [0,N_d)\bigr\},$$
where the integer $d\ge 0$ is called the {\it rank} of the progression, $a,r_1,\ldots,r_d$
lie in $G$, and $N_1,\ldots,N_d$ are integers. The coset progression is said to be
{\it $t$-proper} for some $t>0$ if the sums $h+a+n_1r_1 + \cdots + n_dr_d$ for $h\in H$
and $n_i\in [0,tN_i)$ are distinct, and {\it proper} if it is $1$-proper.

Similarly,
a {\it symmetric} generalised arithmetic progression
is a set of the form
$$ P = \bigl\{n_1 r_1 + \cdots + n_d r_d : n_1 \in [-N_1,N_1] 
,\ldots, n_d\in [-N_d, N_d]\bigr\},$$
and a symmetric coset progression is $H+P$ where $H$ is 
a finite subgroup and $P$
is a symmetric generalised arithmetic progression.

Let $A$ be a subset of an abelian group $G$ and $B$ a subset of an abelian group $H$. A map
$\phi : A\to B$ is called a {\it Freiman isomorphism} if for all $a_1, a_2, a_1', a_2'\in A$,
we have $a_1 + a_2 = a_1' + a_2'$ if and only if $\phi(a_1) + \phi(a_2) = \phi(a_1') + \phi(a_2')$.
Note that if $A$ is Freiman isomorphic to $B$, then $|A+A| = |B+B|$.

Let $X$ be a random variable supported on a finite set $A$.
Besides ordinary Shannon entropy and min-entropy, we will
also use the \textit{collision entropy}, also know as the 
\textit{R\'enyi entropy of order $2$}, which is given by
$$\Eta_2(X) = \log \Biggl( \frac{1}{\sum_{x\in A}\pr(X = x)^2}\Biggr) 
= -\log \pr(X = X'),$$
where $X'$ is an independent copy of $X$.
It is easy to see that $\Etamin(X) \le \Eta_2(X) \le \Eta(X)$
always, and that, 
when $X$ is uniform on a set $A$, 
$\Etamin(X)= \Eta_2(X) = \Eta(X)=\log|A|$.
Given a non-null event $E$, we write $\Eta(X|E)$ for the
entropy of a random variable $Z$ having the distribution
of $X$ conditioned on $E$. And given 
another random variable $Y$, jointly distributed
with $X$, and supported on a finite set $B$,
the {\em conditional entropy}
of $X$ given $Y$ is defined by
$$\Eta(X|Y)=\sum_y \pr(Y=y)\Eta(X|Y=y).$$

\paragraph{Note added during revision}
Following the original announcement of this paper, a preprint of Li pointed out that Lemmas~\ref{lemMO} and~\ref{lempohoata} were, strictly speaking, incorrect as stated~\cite[Remark 6.3]{li2026}. Although these lemmas were used in the proofs of Theorems~\ref{thmminentropy} and~\ref{thmminentropyreal}, the theorems still remain true exactly as originally stated since the error terms that must be accounted for in the corrected versions of Lemmas~\ref{lemMO} and~\ref{lempohoata} can naturally be absorbed into the error constants $K_2$ and $K_3$. The present revision to this paper includes the corrected versions of these four results.
Li~\cite{li2026} also proves Conjecture~18 of~\cite{goh2024} described earlier, with $\delta=1/7$. However, we have left the discussion above unchanged to preserve the thread of the literature.

\section{An entropy power inequality over prime fields}

This section is devoted to the proof Theorem~\ref{thmFpEPI}, 
under the assumption that $X$ is suitably far, in an entropic sense, 
from either being deterministic or being uniform on the whole field. 
Much of the proof and the lemmas the precede it are 
adaptations of parts of Tao's proof of~\cite[Theorem~1.9]{tao2010},

\begin{lemma}\label{lemZsupport}
For any $C>0$ there is $\delta_0 > 0$ such that the following holds.
Let $G$ be an abelian group, let
$X$ and $Z$ be $G$-valued random variables with $\Eta(Z) \leq C$, and let
$X_1$ and $X_2$ be independent copies of $X$. Define
$$A = \bigl\{z\in G : \pr(Z=z)\ge \delta\bigr\},$$
and let $X'$ have the same distribution as $X$ conditioned
on $\{Z\in A\}$.
Then the entropic doubling of $X$ satisfies,
for all $0<\delta<\delta_0$,
\begin{equation}
\Eta(X_1 + X_2) - \Eta(X)
\ge \bigl(\Eta(X_1' + X_2') - \Eta(X')\bigr)
\biggl(1-\frac{C}{\log(1/\delta)}\biggr)^2 
- C\frac{\log\log(1/\delta)}{\log(1/\delta)},
\label{eq:firstclaim}
\end{equation}
where $X_1,X_2$ are independent copies of $X$
and $X_1',X_2'$ are independent copies of $X'$.
If $G=F$ is also a field, then 
$$\frac{\Eta(X_1 X_2)}{\pr(X\ne 0)} - \Eta(X)
\ge \bigl(\Eta(X_1' X_2') - \Eta(X')\bigr)
\frac{\pr(Z\in A)^2}{\pr(X\ne 0)} 
- C\frac{\log\log(1/\delta)}{\log(1/\delta)}.$$
\end{lemma}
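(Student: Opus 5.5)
The approach is to split on the event $E=\{Z\in A\}$ and exploit the fact that $E$ has probability close to~$1$. Throughout I take $(X,Z)$ to be jointly distributed, and let $(X_1,Z_1),(X_2,Z_2)$ be independent copies of the pair, so that $X_1,X_2$ are independent copies of $X$. Put $E_i=\{Z_i\in A\}$ and $q=\pr(Z\in A)$. Conditioned on $E_1\cap E_2$, the pair $(X_1,X_2)$ is distributed as $(X_1',X_2')$. Let $X''$ denote $X$ conditioned on $E^c$.

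\emph{Step 1: $q$ is close to~$1$.} On $E^c$ we have $-\log\pr(Z=z)>\log(1/\delta)$. Hence
$$C\ge\Eta(Z)\ge\pr(Z\notin A)\log(1/\delta),$$
so $1-q\le C/\log(1/\delta)$. Also $|A|\le 1/\delta$, although I do not expect to need this.

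\emph{Step 2: additive claim.} For the upper bound on $\Eta(X)$, the chain rule with the indicator $\one_E$ gives
$$\Eta(X)\le q\,\Eta(X')+(1-q)\,\Eta(X'')+h(q),$$
where $h$ is the binary entropy function. For the lower bound, conditioning reduces entropy, so
$$\Eta(X_1+X_2)\ge \Eta(X_1+X_2\given \one_{E_1},\one_{E_2}).$$
I expand this over the four cases:
\begin{itemize}
\item On $E_1\cap E_2$ the term is $\Eta(X_1'+X_2')$, with weight $q^2$.
\item On $E_1\cap E_2^c$, independence gives $\Eta(X_1+X_2\given\cdot)\ge\max\{\Eta(X'),\Eta(X'')\}$. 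The same holds on $E_1^c\cap E_2$, so together these contribute at least $q(1-q)\Eta(X')+q(1-q)\Eta(X'')$.
\item On $E_1^c\cap E_2^c$ the term is at least $\Eta(X'')$, with weight $(1-q)^2$.
\end{itemize}
Subtracting, the $\Eta(X'')$ coefficients cancel exactly, because $q(1-q)+(1-q)^2=1-q$. This leaves
$$\Eta(X_1+X_2)-\Eta(X)\ge q^2\bigl(\Eta(X_1'+X_2')-\Eta(X')\bigr)-h(q).$$
Since entropic doubling is nonnegative, $q^2\ge(1-C/\log(1/\delta))^2$ may be substituted. Finally, $h(1-q)\le h\bigl(C/\log(1/\delta)\bigr)$ once $C/\log(1/\delta)<1/2$, and this is $O\bigl(C\log\log(1/\delta)/\log(1/\delta)\bigr)$. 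Choosing $\delta_0$ small in terms of $C$ should absorb all constants into the stated form.

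\emph{Step 3: multiplicative claim.} I repeat the same decomposition for $X_1X_2$. The only change is in the cross terms: on $E_1\cap E_2^c$, conditioning further on $X_1$ gives $\Eta(X_1X_2\given\cdot)\ge \pr(X_1\neq0\given E_1)\,\Eta(X'')$, and similarly with the roles swapped. These factors of $\pr(X\ne 0)$ are why the statement normalizes by $\pr(X\ne0)$. I expect the bookkeeping here to be the main obstacle: one has to bound the cross and complement terms by $\pr(X\ne0)$ times the relevant entropies (possibly after conditioning also on $\one_{X_i=0}$), and then divide through so that the $\Eta(X'')$ contributions again cancel. What remains should be $\Eta(X_1'X_2')-\Eta(X')$ with weight $q^2/\pr(X\ne0)$, plus an error $h(q)$-type term of the same size as before.
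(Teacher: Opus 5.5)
Your proposal is the paper's proof: the same four-way conditioning on $(\one_{\{Z_1\in A\}},\one_{\{Z_2\in A\}})$, with one cross term charged to $\Eta(X')$ and the other to $\Eta(X'')$, and the same bound $\Eta(X)\le \Eta(X\given \one_{\{Z\in A\}})+h_2(q)$, so the $\Eta(X'')$ terms cancel exactly. One caveat is shared with the paper: $h_2(q)\le C\log\log(1/\delta)/\log(1/\delta)$ can fail when $C<e$, because with $L=\log(1/\delta)$ one has $h_2(C/L)\approx \frac{C}{L}\bigl(\log L+\log\frac{e}{C}\bigr)$, and shrinking $\delta_0$ cannot absorb the excess; for small $C$ run the argument with $\max\{C,e\}$, which is harmless for how the lemma is used.

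For Step~3, keep the same asymmetric allocation; using $\pr(X\ne 0\given Z\in A)\,\Eta(X'')$ on both cross events would not close. Bound the $\{Z_1\in A,\,Z_2\notin A\}$ term by $\pr(X\ne 0\given Z\notin A)\,\Eta(X')$ (condition on $X_2$), the $\{Z_1\notin A,\,Z_2\in A\}$ term by $\pr(X\ne 0\given Z\in A)\,\Eta(X'')$, and the $\{Z_1,Z_2\notin A\}$ term by $\pr(X\ne 0\given Z\notin A)\,\Eta(X'')$, as the paper does. Then the $\Eta(X'')$ coefficients sum to exactly $\pr(X\ne 0)(1-q)$. Subtracting $\pr(X\ne 0)\,\Eta(X\given \one_{\{Z\in A\}})$ leaves $q^2\,\Eta(X_1'X_2')-q^2\pr(X\ne 0\given Z\in A)\,\Eta(X')\ge q^2\bigl(\Eta(X_1'X_2')-\Eta(X')\bigr)$, and no conditioning on $\one_{\{X_i=0\}}$ is needed.
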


\begin{proof}
Write $p_Z(z) = \pr(Z=z)$ for the probability
mass function of $Z$.
Let $0<\delta<1/2$ be a parameter to be chosen later. Since
$$C \geq \Eta(Z) = \sum_{z\in G} p_Z(z) \log{1\over p_Z(z)}$$
and
$$\sum_{z\in G} p_Z(z) \log{1\over p_Z(z)} \ge \biggl(\log{1\over \delta}\biggr)\sum_{z\notin A} p_Z(z) = \biggl(\log{1\over\delta}\biggr)\pr(Z\notin A),$$
we have
\begin{equation}\begin{aligned}\label{eqCoverlog}
\pr(Z\notin A) \le {C\over \log(1/\delta)}.
\end{aligned}\end{equation}
Therefore, we can choose $\delta>0$ small enough,
depending only on $C$,
so that $\pr(Z\in A)\neq 0$ and the distribution 
of $X'$ is well defined. Also,
by elementary properties of the map
$h_2(p)= - p\log p - (1-p)\log(1-p)$, 
$p\in(0,1)$, we have that
$$\Eta(\one_{\{Z\in A\}}) \le C{\log\log(1/\delta)\over \log(1/\delta)}.$$
Consequently,
\begin{equation}\begin{aligned}\label{eqloglogoverlog}
\Eta(X\given \one_{\{Z\in A\}})
&= \Eta(X\given Z\in A)\pr(Z\in A) + 
\Eta(X\given Z\notin A)\pr(Z\notin A) \cr
&\ge \Eta (X) - C{\log\log(1/\delta)\over \log(1/\delta)}.
\end{aligned}\end{equation}

Now, let $(X_1, Z_1)$ and $(X_2, Z_2)$ be independent copies of $(X,Z)$, so that
\begin{equation}\begin{aligned}\label{eqcheckpoint}
\Eta(X_1 + X_2) &\ge \Eta(X_1 + X_2\given \one_{\{Z_1\in A\}}, 
\one_{\{Z_2\in A\}}) \cr
&= \Eta(X_1 + X_2 \given Z_1, Z_2\in A) \pr(Z\in A)^2 \cr
&\qquad + \Eta(X_1 + X_2 \given Z_1\in A, Z_2\notin A) 
\pr(Z\notin A)\pr(Z\in A) \cr
&\qquad + \Eta(X_1 + X_2 \given Z_1\notin A, Z_2\in A)
\pr(Z\notin A)\pr(Z\in A) \cr
&\qquad + \Eta(X_1 + X_2 \given Z_1, Z_2\notin A) \pr(Z\notin A)^2.
\end{aligned}\end{equation}
Since $X_1$ and $X_2$ are conditionally independent 
given $(Z_1, Z_2)$, we have
\begin{align*}  
\Eta(X_1 + X_2 \given Z_1\in A, Z_2\notin A) &\ge \Eta(X\given Z\in A),\\
\Eta(X_1 + X_2 \given Z_1\notin A, Z_2\in A) &\ge \Eta(X\given Z\notin A),\\
\Eta(X_1 + X_2 \given Z_1\notin A, Z_2\in A) &\ge \Eta(X\given Z\in A),\\
\Eta(X_1 + X_2 \given Z_1, Z_2\notin A) &\ge \Eta(X\given Z\notin A).
\end{align*}
Thus,
\begin{equation}\begin{aligned}\label{eqadditioncounterpart}
\Eta(X_1 + X_2) &\ge \Eta(X_1 + X_2 \given Z_1, Z_2\in A) \pr(Z\in A)^2 \cr
&\qquad+\Eta(X\given Z\in A)   \pr(Z\in A)\pr(Z\notin A)  \\
&\qquad+\Eta(X\given Z\notin A)\pr(Z\in A)\pr(Z\notin A) \cr 
&\qquad+\Eta(X\given Z\notin A)\pr(Z\notin A)^2\cr
&= \Eta(X\given \one_{\{Z\in A\}}) + \bigl(\Eta(X_1'+X_2') 
- \Eta(X')\bigr)\pr(Z\in A)^2, 
\end{aligned}\end{equation}
and substituting~\eqref{eqCoverlog} and~\eqref{eqloglogoverlog} 
into~\eqref{eqadditioncounterpart} gives~\eqref{eq:firstclaim},
settling the first claim of the lemma.

To obtain the second claim, 
assume without loss of generality that 
$\pr(X\neq 0)$ is nonzero, and
observe that the analysis we performed up to and including~\eqref{eqcheckpoint} is valid with addition replaced by multiplication in the field. 
From there, we note that
\begin{align*}
\Eta(X_1 X_2 \given Z_1\in A, Z_2\notin A) 
&\ge \pr(X\ne 0|Z\notin A)
\Eta(X\given Z\in A),\\
\Eta(X_1 X_2 \given Z_1\notin A, Z_2\in A) 
&\ge \pr(X\ne 0|Z\in A)
\Eta(X\given Z\notin A),\\
\Eta(X_1 X_2 \given Z_1, Z_2\notin A) 
&\ge \pr(X\ne 0|Z\notin A)
\Eta(X\given Z\notin A).
\end{align*}
Thus the counterpart to~\eqref{eqadditioncounterpart} is
$$\frac{\Eta(X_1X_2)}{\pr(X\ne 0)}\ge 
\Eta(X\given \one_{\{Z\in A\}}) + 
\bigl(\Eta(X_1'X_2') - \Eta(X')\bigr)\frac{\pr(Z\in A)^2}{\pr(X\ne 0)},$$
and the second claim follows by 
bounding $\Eta(X\given \one_{\{Z\in A\}})$ as before. 
\end{proof}

We will later use the following statement about symmetric coset progressions.

\begin{theorem}[{\rm\cite[Corollary 1.18]{taovu2008}}] 
\label{thmtorsionjohn}
Let $P$ be a symmetric coset progression of rank $d\ge 0$, and let
$t\ge 1$. There exists a $t$-proper symmetric coset progression $Q$ of rank at most $d$ such
that $P\subseteq Q$ and
$$|Q| \le t^d O(d)^{3d^2/2} |P|.$$
\end{theorem}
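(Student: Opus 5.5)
The statement to prove is \Cref{thmtorsionjohn}, a torsion version of John's theorem for coset progressions, quoted from Tao--Vu. The plan is to reduce it to the torsion-free case by lifting, and to handle the finite subgroup $H$ by carrying it along.

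\textbf{Step 1 (lifting).} Write $P = H + P_0$, where
\[
P_0 = \{n_1r_1+\cdots+n_dr_d : |n_i|\le N_i\}.
\]
Consider the lattice box $B=\prod_i[-N_i,N_i]\subseteq\ZZ^d$. Consider also the homomorphism
\[
\phi:\ZZ^d\to G/H,\qquad \phi(n)=\sum_i n_ir_i+H.
\]
Then $P$ is the preimage in $G$ of $\phi(B)$. Let $\Lambda=\ker\phi$, a subgroup of $\ZZ^d$. If $\phi$ is injective on $tB$, there is nothing to do beyond adjusting, so assume $\Lambda$ meets $2tB$ nontrivially.

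\textbf{Step 2 (geometry of numbers on the quotient).} View $\ZZ^d/\Lambda$ as a finitely generated abelian group $\ZZ^{d'}\oplus T$. Here $T$ is finite torsion and $d'\le d$. Project the convex body $tB\subseteq\RR^d$ onto $\RR^d/\mathrm{span}(\Lambda)\cong\RR^{d'}$. Apply John's theorem together with Minkowski's second theorem to the image lattice and convex body. This gives a basis $w_1,\dots,w_{d'}$ and lengths $M_j$ such that the generalised progression $\{\sum m_jw_j:|m_j|\le M_j\}$ contains the image of $B$ and is $t$-proper. Its size should be at most $t^{d}O(d)^{O(d^2)}$ times the lattice-point count of the image of $B$.

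The torsion part $T$ and the part of $\Lambda$-directions that get quotiented out are then absorbed into a larger finite subgroup. Concretely, set $H'=H+\phi^{-1}$-image of the torsion/kernel directions, a finite subgroup of $G$. Then $Q=H'+\{\sum m_j\phi(w_j)\}$. By construction $Q\supseteq P$, and $t$-properness descends from properness in the quotient lattice.

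\textbf{Step 3 (size bound).} Compare $|Q|$ with $|P|$. Each fibre of $P$ over $G/H$ has size $|H|$. The count $|\phi(B)|$ is comparable, up to a factor $O(d)^{d}$ from Minkowski/volume comparisons, to $\mathrm{vol}$ of the projected body divided by the covolume. The new subgroup $H'$ accounts for exactly the collapsed directions. Hence $|Q|\le t^dO(d)^{3d^2/2}|P|$, with the $d^2$ exponent arising from iterating the argument at most $d$ times (each step reducing rank, losing $O(d)^{O(d)}$).

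\textbf{Main obstacle.} The key difficulty is Step 2: ensuring that after a single application the new progression is genuinely $t$-proper, rather than just proper. I would first try the iterative scheme. If the current progression is not $t$-proper, a short relation exists in $tB$. Using this relation, either reduce the rank by one (a relation of infinite order) or enlarge $H$ (a torsion relation), at a multiplicative cost $O(td)^{O(d)}$ each time. Because rank drops at most $d$ times, the total cost is as claimed. Being only a quoted result, the paper presumably just cites \cite{taovu2008}.
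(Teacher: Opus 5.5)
As you guessed, the paper gives no proof of this statement; it imports it from Tao--Vu. So your sketch has to stand on its own, and it does not: Step~2 fails as written.

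\textbf{Step~2.} You quotient by all of $\mathrm{span}(\Lambda)$ and absorb all the torsion of $\ZZ^d/\Lambda$ into $H'$. But $\Lambda$ almost always contains long relations that are irrelevant to $t$-properness, and collapsing them destroys the size bound. Take $G=\FF_p$, $H=\{0\}$, $d=2$, $r_1=r_2=1$ and $N_1=N_2=N$ with $8tN<p$.
\begin{itemize}
\item Then $P=\{-2N,\dots,2N\}$ is not even proper, since $(1,-1)\in\Lambda$.
\item But $(p,0)\in\Lambda$ as well, so $\Lambda=\{(a,b):a+b\equiv 0 \bmod p\}$ has full rank. Hence $d'=0$ and $\ZZ^2/\Lambda\cong\ZZ/p\ZZ$ is pure torsion.
\item Your recipe therefore returns $Q=H'=\FF_p$, and $|Q|/|P|=p/(4N+1)$ is unbounded.
\item The correct output is the rank-one progression $\{m\cdot 1:|m|\le 2N\}$.
\end{itemize}
This is exactly the regime in which the paper uses the theorem, in the proof of \Cref{thmFpEPI}, where it needs $H=\{0\}$ to survive.

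The missing idea is scale separation. Collapse only the span $V$ of the \emph{short} relations $\Lambda\cap cB$. Choose $c$ by pigeonholing over $d+2$ geometrically separated scales, so that $\Lambda\cap cB$ and $\Lambda\cap c'B$ span the same $V$ for some $c'\ge (d+1)c$. With such a gap two things follow:
\begin{itemize}
\item The preimage in $G$ of $\phi(\ZZ^d\cap V)$ is a finite subgroup contained in a bounded dilate of $P$.
\item The image of $\Lambda$ in $\RR^d/V$, which is typically nonzero, meets the projection of $cB$ only at $0$.
\end{itemize}
The second point is what your sentence ``$t$-properness descends from properness in the quotient lattice'' silently assumes, and it is false without the gap. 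Your Step~3 comparison of $|\phi(B)|$ with volume over covolume is also unjustified for projected bodies that are thin relative to the projected lattice.

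\textbf{Fallback iteration.} This does not close the gap either.
\begin{itemize}
\item In your dichotomy a torsion relation enlarges $H$ without lowering the rank. So ``the rank drops at most $d$ times'' does not bound the number of rounds.
\item Even granting $d$ rounds at cost $O(td)^{O(d)}$ each, the product is $O(td)^{O(d^2)}$. That is $t^{O(d^2)}$ rather than $t^d$, with an unspecified $O(d^2)$ where the statement has $3d^2/2$.
\item The rank-reduction step itself is only asserted. This means rewriting $H+P$ with one fewer generator, at controlled cost, from a relation $\sum_i m_ir_i\in H$ with $|m_i|\le 2tN_i$ that mixes coordinates. That step is the substance of the theorem.
\end{itemize}

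\textbf{Where the exponent comes from.} Note that $t^dO(d)^{3d^2/2}=\bigl(O(d)^{3d/2}t\bigr)^d$. The stated bound is what a \emph{single} containment $P\subseteq Q\subseteq \bigl(O(d)^{3d/2}t\bigr)\cdot P$ gives, where $k\cdot P$ means $P$ with each $N_i$ replaced by $kN_i$. It is combined with $|k\cdot P|\le k^d|P|$ for integers $k\ge 1$, obtained by covering each $[-kN_i,kN_i]$ with $k$ translates of $[-N_i,N_i]$. This is the John-type mechanism behind the cited result, via a discrete John theorem. The $d^2$ in the exponent is the $d$-th power of a one-shot dilation loss, not the trace of $d$ rounds of iteration.
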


Since the coset progressions we will be considering
are not necessarily symmetric, in order to 
apply \Cref{thmtorsionjohn} we will need the following lemma.

\begin{lemma}\label{lemsymmetric}
Let $G$ be an abelian group and let
$H+P\subseteq G$ be a coset progression of rank $d$. There is a symmetric coset progression $H+P'$
of rank at most $d+1$ with $H+P\subseteq H+P'$ and
$$|H+P'| \le O\bigl(|H+P|\bigr).$$
\end{lemma}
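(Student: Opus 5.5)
The plan is to \emph{centre} the progression and then absorb the resulting translate into one extra generator. Write
$$P=\bigl\{a+n_1r_1+\cdots+n_dr_d : n_i\in[0,N_i)\bigr\}.$$
We may discard every index with $N_i=1$, since such a generator contributes nothing; this only lowers the rank. For each remaining $i$, set $M_i=\lceil (N_i-1)/2\rceil$ and $c_i=\lfloor (N_i-1)/2\rfloor$. Every $n_i\in[0,N_i)$ can then be written as $n_i=c_i+m_i$ with $m_i\in[-M_i,M_i]$. Put $b=a+\sum_i c_ir_i$, and let $P''$ be the symmetric progression of rank $d$ with generators $r_i$ and ranges $[-M_i,M_i]$. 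Then $P\subseteq b+P''$.

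To remove the translate, define the symmetric progression of rank at most $d+1$
$$P'=\bigl\{n_0b+m_1r_1+\cdots+m_dr_d : n_0\in[-1,1],\ m_i\in[-M_i,M_i]\bigr\}.$$
Taking $n_0=1$ gives $H+P\subseteq H+b+P''\subseteq H+P'$, which is the required inclusion.

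For the size bound, note that $H+P'\subseteq\{-b,0,b\}+H+P''$, so
$$|H+P'|\le 3\,|H+P''|.$$
It remains to compare $H+P''$ with $H+P$. Since $n_i=c_i+m_i$, the set $b+P''$ is the progression with ranges $n_i\in[0,2M_i]$. When $N_i$ is odd we have $2M_i=N_i-1$, so this range is exactly $[0,N_i)$. When $N_i$ is even the range is $[0,N_i]$, which has one extra value; it is covered by $[0,N_i)\cup\bigl(r_i+[0,N_i)\bigr)$. Consequently $b+H+P''$ is a union of at most $2^{e}$ translates of $H+P$, where $e$ is the number of even $N_i$, and so $|H+P''|\le 2^{e}|H+P|$.

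The main obstacle is precisely this factor $2^{e}$, which prevents the constant from being absolute. The first remedy I would try is to turn each even $N_i$ into an odd one by regrouping that direction as a pair of directions with odd lengths: write $n_i=2q+s$ with $s\in\{0,1\}$, so that $N_i=2k$ becomes $q\in[0,k)$ together with the generator $r_i$ taken with range $s\in\{0,1\}$. This changes the rank, and I would need to check whether the rank bound $d+1$ in the statement survives. If it does not, I would fall back to reading the implied constant in $O(\cdot)$ as depending on $d$, which is how the lemma is used: there the rank is bounded in terms of fixed parameters, and Theorem~\ref{thmtorsionjohn} already costs a factor $O(d)^{3d^2/2}$. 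On that reading the bound $|H+P'|\le 3\cdot 2^{d}|H+P|=O(|H+P|)$ suffices.
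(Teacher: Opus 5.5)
Your argument is correct and is essentially the paper's proof. The paper likewise lengthens each even $N_i$ by one, writes the result as $x+P'$ with $P'$ symmetric, and uses $H+P\subseteq H+P'+\{-x,0,x\}$. Its remark that the lengthening costs ``no more than a constant factor'' hides exactly your $2^{e}$ loss. One harmless slip: $b+P''$ actually has ranges $[c_i-M_i,c_i+M_i]=[-1,N_i-1]$ for even $N_i$, which is covered by $[0,N_i)$ and its translate by $-r_i$, so your count is unchanged.

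Your fallback to a constant depending on $d$ is forced, not merely convenient, so drop the splitting remedy (its new range $\{0,1\}$ has even length anyway). Any symmetric coset progression in $\ZZ^d$ containing $\{0,1\}^d$ has trivial $H$ and $d$ linearly independent generators with $M_j\ge 1$. It therefore has at least $3^d$ elements, against $|P|=2^d$.
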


\begin{proof}
Let $r_1,\ldots, r_d\in G$ and $N_1, \ldots, N_d$ be positive integers such that
$$ P = \bigl\{a + n_1 r_1 + \cdots + n_d r_d : n_1 \in [0, N_1) ,\ldots, n_d\in [0,N_d)\bigr\}.$$
By possibly increasing each $N_i$ by $1$ (and thereby increasing the size of $|P|$ by no more than
a constant factor) we can assume that all of the $N_i$ are odd.
Then, letting
$$x = a + {N_1 - 1\over 2} r_1 + \cdots + {N_d - 1\over 2} r_d,$$
we see that $P$ is equal to $x+P'$, where $P'$ is symmetric. Hence $H+P$ is contained
in the symmetric coset progression
$$H + P' + \{-x, 0, x\},$$
which is of rank at most $d+1$ and size at most $3|H+P|$.
\end{proof}

\Cref{thmtorsionjohn} supplies us with a $t$-proper generalised arithmetic progressions for any $t$. As
long as $t\ge 3$, this induces a Freiman isomorphism to a discrete box.

\begin{lemma}\label{lemproper}
Let $G$ be any abelian group, $d\ge 0$ be an integer, and suppose that
for some $r_1,\ldots,r_d\in G$ and $N_1,\ldots,N_d\ge 1$,
the symmetric generalised arithmetic progression
$$ Q = \bigl\{n_1 r_1 + \cdots + n_d r_d : n_1 \in [-N_1,N_1] ,\ldots, n_d\in [-N_d, N_d]\bigr\}$$
is $t$-proper for some $t\ge 3$. Then the map $\phi:G\to \ZZ^d$ given by
$$n_1 r_1 + \cdots + n_d r_d \mapsto (n_1,\ldots,n_d)$$
induces a Freiman isomorphism from $Q$ to the discrete box
$B = [-N_1,N_1] \times \cdots\times [-N_d, N_d]$.
\end{lemma}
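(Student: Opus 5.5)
The plan is to show first that $\phi$ is well defined and injective on $Q$, and then to verify the Freiman condition in both directions using $t$-properness with $t\ge 3$.

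\emph{Well defined and bijective.} By $t$-properness with $t\ge 3\ge 1$, the elements $n_1r_1+\cdots+n_dr_d$ with $|n_i|\le N_i$ are pairwise distinct. So each $q\in Q$ has a unique representation, and $\phi(q)=(n_1,\ldots,n_d)$ is well defined on $Q$. It is a bijection onto $B$: surjective by the definition of $Q$, injective because distinct tuples give distinct elements. (The symmetric version of properness is read as distinctness of the sums with coefficients $|n_i|\le tN_i$; under the paper's half-open convention $[0,tN_i)$ this covers the range below after a translation.)

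\emph{Additive relations correspond.} Take $a_1,a_2,a_1',a_2'\in Q$ with coordinate vectors $u,v,u',v'\in B$.
\begin{itemize}
\item If $u+v=u'+v'$ in $\ZZ^d$, then linearity of $(n_i)\mapsto\sum n_ir_i$ gives $a_1+a_2=a_1'+a_2'$ in $G$. This direction needs no hypothesis.
\item Conversely, suppose $a_1+a_2=a_1'+a_2'$. Then $a_1+a_2$ and $a_1'+a_2'$ are both represented by the coefficient vectors $u+v$ and $u'+v'$. Each coordinate of these vectors satisfies $|(u+v)_i|\le 2N_i$, so both lie in $[-2N_i,2N_i]$, which is within the range where $t$-properness guarantees distinct sums since $t\ge 3>2$. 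Hence $u+v=u'+v'$.
\end{itemize}

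The only delicate point is the range bookkeeping in the converse direction. One could instead move terms across and compare $u-u'$ with $v'-v$, but that has the same size, so it requires no more room. Taking $t\ge 3$ gives a comfortable margin over the $t\ge 2$ that is actually needed, so there is no real obstacle here.
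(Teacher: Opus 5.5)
Your proposal is correct and follows essentially the same argument as the paper: properness gives the bijection onto $B$, the forward direction is linearity, and the converse applies $t$-properness to the coefficient vectors $u+v$ and $u'+v'$, which both lie in $\prod_i[-2N_i,2N_i]$. Your side remarks are also accurate: $t\ge 2$ already suffices, and the half-open convention for properness reduces to the symmetric one by translation.
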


\begin{proof}
Since $t\ge 1$, $Q$ is proper and hence $\phi$ induces a bijection from $Q$ to $B$. It is clear that for all $(m_1,\ldots,m_d), (m_1',\ldots,m_d'), (n_1,\ldots,n_d), (n_1',\ldots,n_d) \in B^4$, if
$$(m_1,\ldots,m_d) + (n_1,\ldots, n_d)
= (m_1',\ldots,m_d') + (n_1',\ldots,n_d'),$$
then
$$(m_1+n_1)r_1 + \cdots + (m_d+n_d)r_d = (m_1'+n_1')r_1 + \cdots + (m_d'+n_d')r_d.$$
The converse follows from our assumption that $Q$ is $3$-proper applied to the fact that $(m_1+n_1, \ldots, m_d + n_d)$
and $(m_1'+n_1', \ldots, m_d+n_d)$ are both in $[-2N_1, 2N_1]\times\cdots\times[-2N_d, 2N_d]$.
\end{proof}

The next lemma shows that Freiman isomorphisms preserve entropies of sums of random variables.

\begin{lemma}\label{lemfreiman}
Let $G$ and $H$ be abelian groups. Let $A\subseteq G$ and
$B\subseteq H$ be Freiman isomorphic; that is, there exists a Freiman isomorphism $\phi:A\to B$.
Let $X$ be a $G$-valued random variable whose support is contained in $A$
and let $X' = \phi(X)$.
Then,
$$\Eta(X_1 + X_2) = \Eta(X_1' + X_2'),$$
where $X_1,X_2$ are independent copies of $X$, and $X_1',X_2'$ are 
independent copies
of $X'$.
\end{lemma}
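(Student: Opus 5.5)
The idea is to show that the distribution of $X_1+X_2$ is just the pushforward under a bijection of the distribution of $X_1'+X_2'$. Entropy is invariant under bijective relabelling of outcomes, so the claim follows at once.

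First, extend $\phi$ to the sumset. Define $\psi : A+A \to B+B$ by
$$\psi(a_1+a_2) = \phi(a_1)+\phi(a_2), \qquad a_1,a_2\in A.$$
Suppose $a_1+a_2 = a_1'+a_2'$. The Freiman property (the ``only if'' direction) gives $\phi(a_1)+\phi(a_2) = \phi(a_1')+\phi(a_2')$, so $\psi$ is well defined. It is surjective onto $B+B$ because $\phi$ maps $A$ onto $B$. We take $\phi$ to be a bijection $A\to B$; this is implicit in the notion of isomorphism, and the remark $|A+A|=|B+B|$ presupposes it. Injectivity of $\psi$ is then the ``if'' direction: $\phi(a_1)+\phi(a_2)=\phi(a_1')+\phi(a_2')$ forces $a_1+a_2=a_1'+a_2'$.

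Second, pass to the random variables. Since $X$ is supported in $A$, we may define $X_i' = \phi(X_i)$ for $i=1,2$. These are independent copies of $X'=\phi(X)$, and $\psi(X_1+X_2) = X_1'+X_2'$ almost surely. Because the entropies in the statement depend only on the joint law of independent copies, this particular coupling is harmless. As $\psi$ is injective on $A+A$, which contains the support of $X_1+X_2$, we have
$$\pr(X_1'+X_2' = \psi(s)) = \pr(X_1+X_2=s)$$
for every $s\in A+A$. The two probability mass functions are therefore the same list of numbers, relabelled, and $\Eta(X_1+X_2)=\Eta(X_1'+X_2')$.

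The only point requiring care is the well-definedness and injectivity of $\psi$. Both come directly from the two directions of the ``if and only if'' in the definition of a Freiman isomorphism, so I expect no real obstacle.
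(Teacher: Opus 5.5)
Your proof is correct and is essentially the paper's argument, which just asserts $\pr(X_1'+X_2'=\phi(x_1)+\phi(x_2))=\pr(X_1+X_2=x_1+x_2)$ for $x_1,x_2\in A$; your map $\psi$ makes explicit the well-definedness and injectivity that this one-line argument uses implicitly. You also do not need to assume $\phi$ is bijective: injectivity of $\psi$ uses only the ``if'' direction, and surjectivity onto $B+B$ plays no role in the entropy comparison.
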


\begin{proof}
For any $x_1,x_2\in A$,
$$\pr\bigl( X_1' + X_2' = \phi(x_1) + \phi(x_2) \bigr) 
= \pr(X_1 + X_2 = x_1 + x_2),$$
since $\phi$ is a Freiman isomorphism. The result follows.
\end{proof}

The following proposition from~\cite{tao2010} states that
any discrete random variable with small doubling is close
to a uniform random variable on a coset progression.

\begin{proposition}[{\rm\cite[Proposition~5.2]{tao2010}}]
\label{proptaofivetwo}
Let $G$ be an abelian group and suppose $X$ is a
$G$-valued discrete random variable such that
$$\Eta(X+X') \le \Eta(X) + K,$$
where $X'$ is an independent copy of $X$ and $K$ is some constant. Then
we may express $X = U+Z$, where 
$Z$ has $\Eta(Z) = O_K(1)$ and
$U$ is 
uniformly distributed on a coset progression 
$H+P\subseteq G$ of rank $O_K(1)$ and
cardinality $O\bigl(2^{\Eta(X)}\bigr)$.
\end{proposition}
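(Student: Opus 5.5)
Since this is \cite[Proposition~5.2]{tao2010}, the plan is to reconstruct Tao's argument in three stages: an entropic Balog--Szemer\'edi--Gowers step that produces a set, Freiman's theorem in arbitrary abelian groups applied to that set, and a coupling step that turns the resulting structure into an exact decomposition $X=U+Z$.

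\emph{Stage 1 (set with small doubling).} Write $n=\Eta(X)$. Using $\Eta(X+X')\le \Eta(X)+K$, I would show that $X$ has small Ruzsa entropy distance to a uniform random variable on some set. To find the set, take $N$ i.i.d.\ copies of $X$ and use joint typicality of $(X_1,X_2,X_1+X_2)$. Averaging then yields a set $A\subseteq G$ with $|A|\asymp_K 2^{n}$, $|A+A|\le O_K(|A|)$, and $\pr(X\in A+x_0)\gg_K 1$ for some shift $x_0$. The simpler entropic route is to use Tao's entropic BSG and the Pl\"unnecke--Ruzsa inequality for entropy. These give that $\Eta(X-U_A)\le \Eta(X)+O_K(1)$, where $U_A$ is independent of $X$ and uniform on $A$.

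\emph{Stage 2 (coset progression).} Apply the Green--Ruzsa form of Freiman's theorem in an arbitrary abelian group to $A$. This gives a coset progression $H+P$ of rank $O_K(1)$ and size $O_K(|A|)=O_K(2^n)$, together with $O_K(1)$ translates of $H+P$ covering $A-A$, and hence covering $A$. Combined with Ruzsa's covering lemma and the bound on $\Eta(X-U_A)$, this should show that $X$ lies, up to $O_K(1)$ bits of information, in boundedly many translates of $S=H+P$. Concretely, I aim to prove that there is a function $T$ of $X$ taking $O_K(1)$ values, with $\Eta(T)=O_K(1)$, such that $X-x_T\in S'$. Here $S'$ is a bounded dilate of $S$, that is, $S$ replaced by $H+mP$ for some $m=O_K(1)$; it is again a coset progression of rank $O_K(1)$ and size $O_K(2^n)$.

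\emph{Stage 3 (coupling).} Finally I would construct a coupling of $X$ with $U$ uniform on $S'$ such that $Z:=X-U$ has $\Eta(Z)=O_K(1)$. My first attempt is a transport-type construction. Given the shift $x_T$, the conditional law of $X-x_T$ is a measure on $S'$ whose entropy is $\log|S'|-O_K(1)$, because $\Eta(X)=n$ and $|S'|=O_K(2^n)$. Such a measure is close in relative entropy to uniform. One can then couple a uniform $U$ with it so that the displacement $X-x_T-U$ is small in entropy, by matching mass along a partition of $S'$ into sub-progressions $H+P_0$ at a coarser scale. Then $Z$ is determined by $T$, the coarse cell index, and the offset, each with $O_K(1)$ entropy.

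This coupling step is the main obstacle. A non-uniform $X$, such as a discrete Gaussian, is not literally a uniform variable plus independent noise, so $U$ and $Z$ must be allowed to be dependent, and the matching must be done carefully. If the direct transport fails, I would fall back on Tao's trick of writing $X$ as a mixture over level sets of its mass function. The mass function takes values in $O_K(1)$ dyadic levels with high probability. On each level set, Stage~2 applies to give near-uniformity on a large subset of a coset progression, and one absorbs the level index and the residual discrepancy into $Z$.
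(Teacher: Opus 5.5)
Your three-stage plan is reasonable, and you are right that $U$ and $Z$ must be allowed to be dependent. The paper gives no proof of its own here; it cites Tao. So your outline has to stand on its own, and it has a genuine gap exactly where you flag it: Stage~3 is not yet an argument.

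\textbf{Stage~3.} All you know about the conditional law $\mu_t$ of $X-x_T$ given $T=t$ is that the relative entropy $D(\mu_t\,\|\,u_{S'})=\log|S'|-\Eta(\mu_t)$ is $O_K(1)$. Even that holds only on average over $t$, via $\Eta(X\given T)\ge\Eta(X)-\Eta(T)$. A relative-entropy bound gives no control of $\mu_t$ at any fixed scale, so matching masses along one coarser partition into sub-progressions fails. Placing $U$ uniformly inside a cell of size $m$ costs about $\log m$ bits of offset, while the non-uniformity can live on all scales at once. For example, let $\mu$ be uniform on a half-density subset of $\{0,\dots,N-1\}$ which, in the $k$-th of $\tfrac12\log N$ consecutive blocks, consists of alternating runs of length $2^k$. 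The natural local matching (shift by $2^k$ in block $k$) then has displacement entropy of order $\log\log N$.

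What is missing is a genuine transport lemma. For a finite abelian group $\Gamma$ and a law $\mu$ on $\Gamma$, you need a coupling $(R,U)$ with $R\sim\mu$ and $U$ uniform on $\Gamma$. You need $\Eta(R-U)$ bounded by a function of $D(\mu\,\|\,u_\Gamma)$ that can be averaged over $T$, for example $C\,D(\mu\,\|\,u_\Gamma)+C$. This shape is forced, since $\Eta(R-U)\ge\Eta(U\given R)\ge D(\mu\,\|\,u_\Gamma)$ for every coupling.

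Bounds of this kind can be obtained by choosing shifts greedily rather than from a fixed partition. In the example above, viewed in $\ZZ/N\ZZ$, first keep $\min(\mu,u_\Gamma)$ in place. A uniformly random shift then moves on average $r^2$ of the remaining excess $r$. So the excess after $j$ steps is at most $1/(j+2)$, and the index of the shift used has bounded entropy.

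You would then:
\begin{enumerate}
\item identify a proper $S'=H+P'$ with $H\times\prod_i\ZZ/N_i\ZZ$, where lifting a displacement back to $G$ costs at most $d$ bits;
\item use the same target $S'$ for every $t$, so that $U$ is exactly uniform unconditionally;
\item average the bound over $T$.
\end{enumerate}
Your level-set fallback runs into the same exact-coupling problem. ``Near-uniform on a large subset'' is not uniform, and ``$O_K(1)$ levels with high probability'' gives no entropy control of the tail.

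\textbf{Stage~2.} This also needs repair, since its concrete target is false: $T$ cannot in general take $O_K(1)$ values, because small doubling does not exclude a thin, widely spread tail. In $G=\ZZ^{1+M}$, let $X$ be uniform on $\{0,e_0,\dots,(N-1)e_0\}$ with probability $1-\eta$ and uniform on $\{e_1,\dots,e_M\}$ with probability $\eta\asymp1/\log(NM)$. Then $\Eta(X+X')-\Eta(X)=O(1)$ for every $M$. But a translate of a rank-$d$ progression contains at most $d+1$ of the affinely independent points $e_i$, so $T$ must take at least $M/(d+1)$ values.

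Only $\Eta(T)=O_K(1)$ is attainable, and it has to be proved. One way:
\begin{enumerate}
\item Take $U_S$ uniform on $S$ and independent of $X$. Then $\dR(X,U_S)=O_K(1)$ gives mutual information $I(X;X-U_S)=\Eta(X-U_S)-\log|S|=O_K(1)$.
\item Let $T$ label the cell containing $X-U_S$ in a partition of $G$ subordinate to a maximal packing $\{t+S\}$, so that the cell of $t$ lies in $t+S-S$.
\item Then $X-S$ meets only $O_K(1)$ cells, whence $\Eta(T)\le I(X;X-U_S)+\Eta(T\given X)=O_K(1)$ and $X-x_T\in 2S-S$.
\end{enumerate}

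\textbf{Stage~1.} This is fine if you cite Tao's entropic inverse theorem to get $\dR(X,U_A)=O_K(1)$. The typical-set sketch by itself only produces sets in $G^N$, not in $G$.
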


We are now ready to prove \Cref{thmFpEPI}.

\begin{proof}[Proof of \Cref{thmFpEPI}.]
Let $0<\eps<1/2$. 
We will establish the existence of $k_\eps, K_\eps\in \RR$ such that,
for any prime $p$ with $\log p > k_\eps + K_\eps$ and any 
$\FF_p$-valued random variable $X$ satisfying
\begin{equation}
k_\eps < \Eta(X) < \log p - K_\eps,
\label{eq:toprove}
\end{equation}
one has
$$\Eta(X_1+X_2) \ge \Eta(X) + \frac12 - \eps,$$
where $X_1,X_2$ are independent copies of $X$.

We shall defer the choices of $k_\eps,$ $K_\eps$ and
an additional parameter $0<\delta<1/2$ until later. 
Assume 
that the claim is false, so that~\eqref{eq:toprove} holds but
$$\Eta(X_1+X_2) < \Eta(X) + {1\over 2} - \eps.$$
Then by \Cref{proptaofivetwo}, we can write $X = U+Z$, where
$\Eta(Z) = O(1)$ and
$U$ is uniformly distributed on a coset progression $H+P$ 
of rank $O(1)$ and
cardinality $O(2^{\Eta(X)})$.
We have
$$\Eta(X) \ge \Eta(U+Z\given Z) = 
\Eta(U\given Z) \ge \log|H+P| - \Eta(Z).$$

Picking a $C\geq \Eta(Z)$,
defining 
$A = \bigl\{z\in \FF_p : \pr(Z=z)\ge \delta\bigr\},$
and letting $X'$ have the distribution
of $X$ conditioned on $\{Z\in A\}$,
\Cref{lemZsupport} gives
$$
\Eta(X_1 + X_2) - \Eta(X)
\ge \bigl(\Eta(X_1' + X_2') - \Eta(X')\bigr)\biggl(1-\frac{C}
{\log(1/\delta)}\biggr)^2 - C\frac{\log\log(1/\delta)}{\log(1/\delta)}.
$$
Choosing $\delta<\delta_0$ small enough in terms of 
$\eps$, we have
$$\frac12-\eps > \Eta(X_1 + X_2) - \Eta(X)
\ge \bigl(\Eta(X_1' + X_2') - \Eta(X')\bigr)\biggl(1-\frac{\eps/2}{1-\eps}\biggr)-\frac\eps4.$$
Hence to obtain a contradiction we need to show that
\begin{equation}
\label{WTSXprime}\Eta(X_1' + X_2')- \Eta(X') \ge {1/2-3\eps/4\over 1-(\eps/2)/(1-\eps)}=
{1\over 2} - {\eps\over 2},
\end{equation}
where $X_1'$ and $X_2'$ are independent copies of $X'$.

Note that $X'$ takes values in the set $A+H+P$ and,
since $|A| \le 1/\delta$, we can embed
$A+H+P$ in a coset progression $H+P'$ of rank at most $O_\delta(1)$ and size
$O_\delta\bigl(|H+P|\bigr)$. By invoking \Cref{lemsymmetric} and 
then \Cref{thmtorsionjohn}
we can find a symmetric coset progression $H+Q$ of rank $O_\delta(1)$
such that $H+P'\subseteq H+Q$ and $|H+Q| \le K_\delta |H+P|$ for some $K_\delta$ depending on $\delta$
(which depends only on $\eps$).
We can assume that $H+Q$ is $3$-proper by increasing $K_\delta$ if needed.

Let $K_\eps$ be a large absolute constant such that $\log K_{\eps} \geq 
C+ \log K_\delta$.
There are {\it a priori} two possibilities for the subgroup 
$H\subseteq \FF_p$; either $H=\{0\}$ or $H=\FF_p$. However,
by our hypothesis on $\Eta(X)$,
\begin{equation}\begin{aligned}
\log |H+Q| &\le \log |H+P| + \log K_\delta \cr
&\le \Eta(X) + \Eta(Z) + \log K_\delta \cr
&\leq \Eta(X) + \log K_\eps < \log p,
\end{aligned}\end{equation}
so $H+Q\ne \FF_p$ and thus $H\ne \FF_p$. Hence we have $H=\{0\}$ and $X'$ is supported on the
generalised arithmetic progression $Q$.

Let $d = O_\delta(1)$ be the rank of $Q$. Since $Q$ is $3$-proper, by \Cref{lemproper} it is
Freiman isomorphic to a box $[-N_1,N_1]\times\cdots\times [-N_d,N_d]\subseteq
\ZZ^d$. Let $\phi$ be this Freiman isomorphism, and let $X'' = \phi(X')$. 
By \Cref{lemfreiman}
we see that proving~\eqref{WTSXprime} is equivalent to showing that
$$\Eta(X_1'' + X_2'') \ge \Eta(X'') + {1\over 2} - {\eps\over 2},$$
where $X_1''$ and $X_2''$ are independent copies of $X''$. 
But $\ZZ^d$ is torsion-free, so appealing 
to~\cite[Theorem~1.9]{tao2010},
this is true so long as $\Eta(X'')$ is sufficiently large in terms of $\eps$, which in turn follows from our assumption that $\Eta(X) > k_\eps$ is sufficiently large.
\end{proof}

\paragraph{Independent but non-identically distributed random variables}
\Cref{thmFpEPI} was only stated and proved for 
the sum of independent random variables $X,X'$ with the same distribution.
If $X,Y$ are independent but not identically distributed, 
\Cref{thmFpEPI} implies a bound for $\Eta(X+Y)$ but with a 
worse constant. Its proof uses 
the \textit{entropic Ruzsa distance} between two random variables 
$X$ and $Y$ taking values in the same abelian group,
defined as
$$\dR(X,Y) = \Eta(X'-Y') - \frac12\Eta(X) - \frac12\Eta(Y),$$
where $X',Y'$ are independent, with the
same marginal distributions as $X,Y$.
The entropic Ruzsa distance
 satisfies~\cite{tao2010}
the triangle inequality
$$\dR(X,Z) \le \dR(X,Y) + \dR(Y,Z),$$
as well as the sum-difference inequality
$$\dR(X,-Y) \le 3\dR(X,Y).$$

\begin{theorem}\label{thmnoniid}
Let $G = \FF_p$ or $G=\RR$, let $0<\eps<1/2$, and let $X$ and $Y$ be 
independent discrete random variables taking values in $G$. Suppose that
$\Eta(X)$
is large enough depending on $\epsilon$ and,
if $G = \FF_p$ for some prime $p$,
then suppose further that $\log p\ge k_\eps + K_\eps$ and
$$k_\eps < \Eta(X) < \log p - K_\eps,$$
where $k_\eps$ and $K_\eps$ are parameters given by \Cref{thmFpEPI}.
Then
$$\Eta(X + Y) \geq \frac{1}{2}\Eta(X) + \frac{1}{2}\Eta(Y) + \frac{1}{8} - \epsilon.$$
\end{theorem}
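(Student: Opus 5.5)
The plan is to reduce to the i.i.d.\ case via the entropic Ruzsa distance. Since entropy is invariant under negation, for independent $X,Y$ we have
$$\dR(X,-Y) = \Eta(X+Y) - \tfrac12\Eta(X) - \tfrac12\Eta(Y),$$
so the claim is equivalent to $\dR(X,-Y) \ge \tfrac18 - \eps$. Similarly, with $X_1,X_2$ independent copies of $X$, we have $\dR(X,-X) = \Eta(X_1+X_2) - \Eta(X)$. This is exactly the entropic additive doubling controlled by \Cref{thmFpEPI} when $G=\FF_p$, and by \Cref{thmtaoEPI} when $G=\RR$, since $\RR$ is torsion-free. Under the stated hypotheses on $\Eta(X)$ and $p$, both results give $\dR(X,-X) \ge \tfrac12 - \eps$.

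The key step is to bound $\dR(X,-X)$ above by a multiple of $\dR(X,-Y)$, using the triangle inequality and the sum-difference inequality. First, the triangle inequality through $-Y$ gives
$$\dR(X,-X) \le \dR(X,-Y) + \dR(-Y,-X).$$
Next, $\dR(-Y,-X) = \dR(Y,X) = \dR(X,Y)$. The first equality holds because replacing both variables by their negatives negates $X'-Y'$ and preserves all entropies involved. The second holds because $\Eta(Y'-X') = \Eta(X'-Y')$. Finally, the sum-difference inequality applied to the pair $(X,-Y)$ gives $\dR(X,Y) = \dR(X,-(-Y)) \le 3\dR(X,-Y)$.

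Combining these gives $\dR(X,-X) \le 4\,\dR(X,-Y)$. Hence
$$\Eta(X+Y) - \tfrac12\Eta(X) - \tfrac12\Eta(Y) \ge \tfrac14\Bigl(\tfrac12 - \eps\Bigr) = \tfrac18 - \tfrac{\eps}{4} \ge \tfrac18 - \eps.$$

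The only delicate point I expect is checking that the triangle and sum-difference inequalities from~\cite{tao2010} are valid in the generality used here: arbitrary discrete random variables in an abelian group, including $\FF_p$, which has torsion. I believe Tao proves both for general abelian groups, so this should be fine. The remaining work is bookkeeping of hypotheses: \Cref{thmFpEPI} and \Cref{thmtaoEPI} only need conditions on $X$, not on $Y$, which matches the statement. In the real case, the requirement that $\Eta(X)$ be large enough depending on $\eps$ is exactly the $k_\eps$ condition of \Cref{thmtaoEPI}.
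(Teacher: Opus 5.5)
Your proof is correct and is essentially the paper's argument. Both bound $\dR(X,-X)\le 4\,\dR(\cdot,\cdot)$ via the triangle inequality plus the sum-difference inequality, and use Theorem~\ref{thmFpEPI} or \Cref{thmtaoEPI} for the lower bound $\tfrac12-\eps$. The only difference is that the paper works with $\dR(X,Y)$ and substitutes $-Y$ for $Y$ at the end, whereas you route the triangle inequality through $-Y$ from the start.
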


\begin{proof} Let $X'$ be an independent copy of $X$.
By applying either \Cref{thmFpEPI} (in the case $G=\FF_p$) or \Cref{thmtaoEPI} (in the case $G=\RR$), we see that
\begin{equation} \label{triangleineqs}
\frac{1}{2} - \epsilon \leq \Eta(X+X') - \Eta(X) = \dR(X,-X) \leq \dR(X,Y) + \dR(X,-Y) \leq 4\dR(X,Y).
\end{equation}
The claim follows after replacing $Y$ with $-Y$.
\end{proof}

Interestingly, the same limiting constant of $1/8$ was obtained, using different methods, for independent and not necessarily identically distributed integer-valued random variables in \cite[Theorem 1]{hat}. Nevertheless, in view of Shannon's continuous entropy power 
inequality~\cite{shannon:48,stam:59}, we believe 
that the constant 1/8 in \Cref{thmnoniid}
can be improved to 1/2, as is the case 
where they are identically distributed. 

\begin{conjecture}
The constant $1/8$ in \Cref{thmnoniid} can be improved to $1/2$.
\end{conjecture}

\section{Min-entropy and sum-product phenomena}

Let $F$ be a field, $P\subseteq F^3$ be a finite set of points in 
three-dimensional space over $F$, and $Q$ be a finite set of planes 
(affine subspaces of codimension $1$) in $F^3$. An \textit{incidence} between $P$ and $Q$ is a pair $(x,h)\in P\times Q$ such that the point $x$ lies on the plane $h$. A central problem in incidence geometry is the study of the number
$$I(P,Q) = \big|\bigl\{ (x,h)\in P\times Q : \textrm{the point}\ x\ \textrm{lies on the plane}\ h\bigr\}\big|$$
of incidences between $P$ and $Q$.

We shall use the following prime-field analogue 
of Rudnev's point-plane incidence bound
due to Koll\'ar~\cite{kollar2015}.
We record a slight strengthening due to de Zeeuw, 
see~\cite{dezeeuw2016} for a short proof.

\begin{lemma}[{\rm\cite[Theorem 4.1]{dezeeuw2016}}]
\label{lempointplane} Let $F$ be a field of characteristic $p$.
Let $P$ be a set of points in $F^3$, and let $Q$ be a set of planes in $F^3$. If
\begin{enumerate}
\item $|P|\le |Q|$;
\item either $p=0$ or $|P|\ll p^2$; and
\item no line in $F^3$ contains $s$ points of $P$ and is contained in $t$ planes of $Q$,
\end{enumerate}
then
$$I(P,Q) \ll |P|^{1/2} |Q| + t|P| + s|Q|.$$
\end{lemma}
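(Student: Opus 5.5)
The plan follows Rudnev's strategy as simplified by de Zeeuw. First convert point--plane incidences in $F^3$ into pairs of intersecting lines in a three-dimensional space. Then apply Koll\'ar's line-intersection bound, which extends the Guth--Katz bound to arbitrary fields and for finite characteristic needs $N\ll p^2$ lines. Koll\'ar's bound states that a set of $N$ lines with at most $k$ lines in any plane or regulus has $\ll N^{3/2}+kN$ intersecting pairs.

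\textbf{Step 1: the line correspondence.} Pass to projective space and use the Klein correspondence. Lines of $\mathbf{P}^3$ are points of the Klein quadric in $\mathbf{P}^5$. The plan is to choose the map so that each point $x\in P$ and each plane $h\in Q$ corresponds to a line, $\ell_x$ and $\ell_h$, in a three-dimensional (projective) space, with $x\in h$ if and only if $\ell_x$ and $\ell_h$ meet. The lines $\{\ell_x\}$ should be pairwise skew, and likewise the lines $\{\ell_h\}$. Concretely, one can write $x=(x_1,x_2,x_3)$ and a non-vertical plane as $h=\{y: y_3=a y_1+b y_2+c\}$. The incidence is then the bilinear-type relation $x_3=ax_1+bx_2+c$, which one rewrites as the condition that the two lines, parametrised by $(x_1,x_2,x_3)$ and by $(a,b,c)$, meet. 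Vertical planes are handled by a generic projective change of coordinates. After this step $I(P,Q)$ is at most the number of intersecting pairs in $L_P\times L_Q$.

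\textbf{Step 2: balancing.} Since $|P|\le |Q|$, partition $Q$ into $\lceil |Q|/|P|\rceil$ groups $Q_j$ with $|Q_j|\le |P|$. Apply Koll\'ar's bound to each family $L_P\cup L_{Q_j}$, which has $N\asymp |P|$ lines. Hypothesis ii) gives $N\ll p^2$, so the bound applies. Summing the resulting estimate $|P|^{3/2}+k|P|$ over the groups gives $|P|^{1/2}|Q| + k|Q|$, which produces the main term $|P|^{1/2}|Q|$.

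\textbf{Step 3: degenerate configurations (main obstacle).} We must show that the concentration parameter $k$ can be replaced by the terms $t|P|+s|Q|$ using hypothesis iii).

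\begin{itemize}
\item \emph{Planes.} The lines of $L_P$ are pairwise skew, so a plane in the image space contains at most one of them, and similarly for $L_Q$. A plane containing many lines of the family therefore needs no special care, except for the pencil configuration.
\item \emph{Reguli.} The real issue is reguli. If a regulus contains many $\ell_x$ and many $\ell_h$, then lines of opposite rulings pairwise meet. Pulled back, this means that many points of $P$ lie on a common line $\lambda\subset F^3$, and many planes of $Q$ contain $\lambda$.
\end{itemize}

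The plan is to remove such rich lines $\lambda$ first. For any line $\lambda$, hypothesis iii) says that either fewer than $s$ points of $P$ lie on $\lambda$ or fewer than $t$ planes of $Q$ contain $\lambda$. A point lies on at most one such configuration per plane, and a plane contains at most one per point. So the incidences coming from degenerate reguli can be charged either to the points (each point contributes at most $t$ incidences, giving $t|P|$) or to the planes (each plane contributes at most $s$, giving $s|Q|$).

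The remaining configuration has $k$ effectively bounded in a way compatible with the main term. I expect making this charging rigorous to be the delicate part of the argument. It requires checking that a regulus meeting both families in many lines is forced to come from a single line $\lambda$ of $F^3$, and it requires dealing with the fact that Koll\'ar's bound counts pairs inside each family as well as across the two families.
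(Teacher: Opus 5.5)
Your proposal has the right overall shape but a genuine gap in Step~3. For context, the paper does not prove this lemma itself: it imports it from de~Zeeuw, whose argument has the same architecture as yours. That architecture is: turn incidences into intersections between two families of pairwise skew lines, balance, and apply the Guth--Katz--Koll\'ar bound. Steps~1 and~2 are fine in outline. Strictly, that bound counts intersection \emph{points}, not pairs; it is the skewness within each family that makes the two counts agree.

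\textbf{First gap: Koll\'ar's hypothesis is not secured.} Step~3 is exactly where hypothesis~iii) must enter, and what you wrote there does not work. Koll\'ar's bound needs a uniform bound $k$ on the number of lines of the whole family $L_P\cup L_{Q_j}$ lying in a regulus, and iii) does not supply one. Suppose a line $\lambda$ contains many points of $P$ (up to $|P|$) but lies in fewer than $t$ planes of $Q$; iii) allows this. In the usual transformations the lines $\ell_x$, $x\in P\cap\lambda$, then all lie on a single regulus. So $k$ can be of order $|P|$, and the term $kN$ makes the bound trivial. Discarding ``degenerate incidences'' cannot fix this, because Koll\'ar's hypothesis is a condition on the line set.

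The missing idea is a pruning step. In each group, repeatedly delete the lines lying on a regulus that contains more than $|P|^{1/2}$ of the remaining lines; planes carry at most two lines and need no pruning.
\begin{enumerate}
\item A line not on that regulus meets it in at most two points.
\item By skewness, each such point lies on at most one deleted line of the other family.
\item Hence each deletion costs $O(|P|)$ incidences.
\item There are $O(|P|^{1/2})$ deletions per group, so the cost is $O(|P|^{3/2})$ per group and $O(|P|^{1/2}|Q|)$ in total.
\end{enumerate}
After pruning, Koll\'ar's bound applies to what is left with $k=|P|^{1/2}$.

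\textbf{Second gap: the charging argument.} What remains are the internal incidences of the deleted reguli, and your charging of these is wrong. A point can lie on many lines $\lambda$, each contained in up to $t-1$ planes of $Q$, so ``each point contributes at most $t$'' fails. The honest per-group bound $t|P|$ sums to $t|Q|$ over the $|Q|/|P|$ groups, not to $t|P|$.

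The correct accounting uses the fact that each plane is deleted at most once, so it is attributed to at most one regulus.
\begin{enumerate}
\item A deleted regulus carrying at least two lines $\ell_x$ and at least two lines $\ell_h$, in opposite rulings, comes from points on a line $\lambda$ and planes through $\lambda$. All other deleted reguli contribute $O(|Q|)$ in total.
\item If $|P\cap\lambda|<s$, each attributed plane gains at most $s$ incidences, giving $s|Q|$ overall.
\item Otherwise $\lambda$ lies in fewer than $t$ planes of $Q$, so it receives $\mu_\lambda<t$ planes, with $\sum_\lambda\mu_\lambda\le|Q|$. Write $X=\sum_\lambda\mu_\lambda|P\cap\lambda|$. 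Cauchy--Schwarz over $x\in P$, together with the fact that two lines share at most one point, gives $X^2\le|P|\bigl(tX+|Q|^2\bigr)$. Hence $X\le t|P|+|P|^{1/2}|Q|$.
\end{enumerate}
This last step is where the $t|P|$ term, rather than $t|Q|$, actually comes from.
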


Using this, the following lower 
bound on the collision entropy
can be established.

\begin{lemma}\label{lemkohetal}
Let $F$ be a field of characteristic $p$.
Let $U_A$, $U_B$, and $U_C$ be independent uniform random 
variables on finite subsets
$A$, $B$, and $C$ of $F$, respectively. Assume that $|A|=|B|=|C|$
and, if $p$ is positive, suppose further that $|A| \ll p^{2/3}$.
Then
$$\Eta_2\bigl( U_A(U_B+U_C)\bigr) \ge \frac32\log|A| - O(1).$$
\end{lemma}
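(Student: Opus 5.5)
The plan is to rewrite the collision probability $\pr\bigl(U_A(U_B+U_C) = U_A'(U_B'+U_C')\bigr)$ as a point--plane incidence count and then apply \Cref{lempointplane}. Write $n=|A|=|B|=|C|$. Then
$$2^{-\Eta_2(U_A(U_B+U_C))} = n^{-6}\,\bigl|\{(a,b,c,a',b',c') : a(b+c) = a'(b'+c')\}\bigr|,$$
so it suffices to show that the number $E$ of solutions is $O(n^{9/2})$.

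First I separate off the degenerate solutions in which both sides vanish, i.e.\ $a(b+c)=0=a'(b'+c')$. These need $a=0$ or $b=-c$, and likewise on the primed side. There are at most $n^2+n^2\le 2n^2$ such triples $(a,b,c)$, so these solutions contribute $O(n^4)$, which is negligible.

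For the remaining solutions I encode the equation $ab+ac-a'b'-a'c'=0$ as an incidence. Take the points $P=\{(a,b',c') \}\subset F^3$, which number $n^3$. For each triple $(b,c,a')$, take the plane $\{(x,y,z) : x(b+c) - a'y - a'z = \ldots\}$; this form fails because $b+c$ couples $b$ and $c$. So I use a cleaner parametrization: replace $b+c$ by $s=b+c$, and handle the fact that $s$ has multiplicity $r(s)=|\{(b,c):b+c=s\}|$. Concretely, I will use the Rudnev-style encoding
$$a b + a c = a' b' + a' c' \iff \text{the point } (a, a'b', a') \text{ lies on the plane } \{(x,y,z) : x(b+c) - y - z c' = 0\}.$$
Here the points are indexed by $(a,a',b')$ and the planes by $(b,c,c')$. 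This gives $|P|\le n^3$ and $|Q|\le n^3$, with possible coincidences that I will bound by counting with multiplicity.

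To justify this, note first that the map $(a,a',b')\mapsto (a,a'b',a')$ is injective whenever $a'\ne0$. Second, the plane determined by $(b,c,c')$ depends only on $(b+c,c')$, so each plane appears with multiplicity at most $n$. Cases with $a'=0$ fall into the degenerate count. To avoid the multiplicity in the planes, I instead take the symmetric encoding $P=\{(a, a', a'b'+a'c' )\}$ and planes parametrized by $(b,c)$. Whichever encoding I use, the goal is the incidence count $E \ll |P|^{1/2}|Q| + t|P|+s|Q|$ with $|P|=|Q|=n^3$, which gives $n^{9/2}$ provided $s,t=O(n)$. The hypothesis $|A|\ll p^{2/3}$ is exactly what gives $|P|\ll p^2$.

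The main obstacle is verifying condition (iii) of \Cref{lempointplane}: that no line contains more than $O(n)$ points of $P$ while also lying in more than $O(n)$ planes of $Q$. Every point has a coordinate in $A$ and every plane has a normal built from $A,B,C$. A line lies in many planes only if the normals of those planes span at most a two-dimensional family with a fixed direction. I would check this by cases on the direction of the line. If the line is not contained in a coordinate-type plane $\{x=a_0\}$, then it meets each slice $x=a$ in at most one point, so it contains at most $n$ points. If it is contained in such a slice, I would instead bound the number of planes through it by $n$, using the fact that the normal then determines a coordinate. The multiplicities coming from coinciding planes or points I would handle by a dyadic pigeonhole, or by dropping the excess into the degenerate count. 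Combining all the pieces gives $E\ll n^{9/2}$, hence $\Eta_2 \ge \frac32\log n - O(1)$.
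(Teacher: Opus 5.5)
\textbf{There is a genuine gap: the encoding.} Your reduction itself is right. We have $\Eta_2=6\log n-\log E$, so you need $E\ll n^{9/2}$. The solutions with $a(b+c)=0=a'(b'+c')$ cost only $O(n^4)$, and $|A|\ll p^{2/3}$ is exactly hypothesis (ii) of \Cref{lempointplane} for $n^3$ points.

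The problem is the encoding you actually commit to: points $(a,a'b',a')$ and planes $x(b+c)-y-zc'=0$.
\begin{itemize}
\item The plane depends only on $(b+c,c')$, so $Q$ is a multiset. The plane for $(s,c')$ is repeated $r(s)=|\{(b,c)\in B\times C:b+c=s\}|$ times.
\item This multiplicity is generic, not degenerate. For $B=C=\{1,\dots,n\}$ there are only $\asymp n^2$ distinct planes, and most of them are repeated $\asymp n$ times.
\item \Cref{lempointplane} is a statement about sets, and dyadic pigeonholing does not repair this. The class of planes with multiplicity about $2^j$ has at most $n^3/2^j$ distinct members, so hypothesis (i) fails. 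The dual form of the incidence bound then gives only $2^j\cdot n^3\,(n^3/2^j)^{1/2}=n^{9/2}2^{j/2}$, which for $2^j\asymp n$ is the trivial bound $n^5$.
\item These incidences cannot be \emph{dropped into the degenerate count} either, because they are the bulk of $E$.
\end{itemize}
Your fallback encoding is not well defined. The points $(a,a',a'(b'+c'))$ depend only on $(a,a',b'+c')$. The only plane equation available is $x(b+c)=z$, which again depends only on $b+c$ and ignores the $y$-coordinate. Neither family has $n^3$ distinct elements. So your sentence \emph{whichever encoding I use, $|P|=|Q|=n^3$} is not supported by any encoding you have written down.

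\textbf{The missing idea} is to split the six variables so that $b$ and $c$ land on opposite sides, as the paper does. Index points by $(a,b',c)$ and planes by $(a',b,c')$:
\[
P=\{(a,b',ac)\},\qquad Q=\{bx-a'y+z=a'c'\}.
\]
Incidence then reads $ab-a'b'+ac=a'c'$, that is, $a(b+c)=a'(b'+c')$. The indexing maps are injective once $a\neq 0$ and $a'\neq 0$, and the zero cases are your $O(n^4)$ term. So $|P|,|Q|\le n^3$ genuinely, with no multiplicities.

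Condition (iii) is then immediate with $s=n+1$ and $t=1$:
\begin{itemize}
\item every plane of $Q$ has normal $(b,-a',1)$, so no plane of $Q$ contains a vertical line;
\item a non-vertical line projects injectively onto a line of the $xy$-plane, and that line meets $A\times B$ in at most $n$ points.
\end{itemize}
This gives $E\ll n^{9/2}+n^4$.

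Your slice-by-slice check of (iii) along planes $x=a_0$ was only sketched. In any case it could not make up for the multiplicity loss in your encoding.
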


\begin{proof}
We extract the relevant argument from the proof 
of~\cite[Theorem~2.1]{kohetal2020}.
Let $P\subseteq F^3$ be the set of points
$$\bigl\{ (a, b', ac) : (a,b',c)\in A\times B \times C\bigr\},$$
and let $Q$ be the collection of planes
$$\bigl\{ bx - a'y + z = a'c' : (a', b, c')\in A\times B\times C\bigr\}.$$
Note that if an element of $P$ lies in a plane from the set $Q$, then
$$ba - a'b' + ac = a'c',$$
hence
$$a(b+c) = a'(b'+c').$$
Therefore, letting
$$N = \Bigl|\bigl\{ (a,b,c,a',b',c')\in A\times A\times B\times B\times C\times C : a(b+c) = a'(b'+c')\bigr\}\Bigr|,$$
we have $N = I(P,Q)$. The first condition of \Cref{lempointplane} is 
satisfied trivially, and the second is satisfied by our assumption 
(in the case that $p>0$) that $\max\bigl\{ |A|, |B|, |C|\bigr\} \ll p^{2/3}$. 
For the third condition we claim that, with the choices 
$s = \max\bigl\{|A|, |B|\bigr\}+1$ and $t=1$, it is satisfied by $P$ and $Q$.
This claim from the observation that, if we project any nonvertical line 
in $F^3$ to the plane $z=0$, the result is a line that can intersect at 
most $s-1$ points of $P$, hence the original line cannot intersect 
$s$ points of $P$. On the other hand, a plane in $Q$ has normal vector 
equal to $(b, a', 1)$ for some $a'\in A$ and $b\in B$, which, in particular,
means it cannot contain any vertical line. Hence the conclusion 
of \Cref{lempointplane} holds, and we have
$$N \ll \bigl( |A|\cdot |B|\cdot |C|\bigr)^{3/2} + \Bigl(\max\bigl\{|A|, |B|\bigr\} +1\Bigr) |A|\cdot |B|\cdot |C| \ll |A|^{9/2}.$$
This count implies the claimed bound on the collision entropy, since
\begin{equation}\begin{aligned}
\Eta_2\bigl( U_A(U_B+U_C)\bigr)
&= -\log \pr_{a,a'\in A, b,b'\in B, c,c'\in C}
\bigl( a(b+c) = a'(b'+c') \bigr)\cr
&=  6 \log|A| - \log N\cr
&\ge \frac32 \log |A| - O(1), 
\end{aligned}\end{equation}
where the probability is with respect to uniformly and independently
random 
$a,a'\in A$, $b,b'\in B$ and $c,c'\in C$.
\end{proof}

As a corollary, we obtain a lower bound concerning the expander $A(B+C)$.

\begin{corollary} 
\label{cor:ABC}
Let $F$ be a field of characteristic $p$.
Let $A$, $B$, and $C$ be finite subsets of $F$ with $|A|=|B|=|C|$
and, if $p>0$, assume that $|A|\ll p^{2/3}$. Then
$\bigl| A(B+C)\bigr| \gg |A|^{3/2}$.
\end{corollary}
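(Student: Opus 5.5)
The plan is to deduce \Cref{cor:ABC} directly from \Cref{lemkohetal}, using the elementary comparison between collision entropy and the logarithm of the support size. Take $U_A$, $U_B$, $U_C$ independent and uniform on $A$, $B$, $C$. The hypotheses of \Cref{lemkohetal} are exactly those of the corollary: $|A|=|B|=|C|$, and $|A|\ll p^{2/3}$ when $p>0$. Hence
$$\Eta_2\bigl(U_A(U_B+U_C)\bigr) \ge \tfrac32\log|A| - O(1).$$

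Next observe that $U_A(U_B+U_C)$ takes values in the finite set $A(B+C)$. For any random variable $W$ supported on a finite set $S$, we have $\Eta_2(W)\le \Eta(W)\le \log|S|$. One can also see this directly: by Cauchy--Schwarz, $\sum_{s\in S}\pr(W=s)^2 \ge 1/|S|$. Applying this with $W = U_A(U_B+U_C)$ and $S = A(B+C)$ gives
$$\log\bigl|A(B+C)\bigr| \ge \tfrac32\log|A| - O(1).$$
Exponentiating yields $|A(B+C)|\ge 2^{-O(1)}|A|^{3/2}$, which is $|A(B+C)|\gg|A|^{3/2}$.

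There is no real obstacle here. The only point requiring care is to check that the hypotheses transfer verbatim, so that the implied constant in $|A|\ll p^{2/3}$ is the one allowed by \Cref{lemkohetal}. All of the substantive work, namely the incidence count via \Cref{lempointplane}, has already been carried out in the proof of \Cref{lemkohetal}.
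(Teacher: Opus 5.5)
Your proposal is correct and is the paper's own proof. Both apply \Cref{lemkohetal} to independent uniform $U_A,U_B,U_C$ and then use the chain $\log|A(B+C)| \ge \Eta\bigl(U_A(U_B+U_C)\bigr) \ge \Eta_2\bigl(U_A(U_B+U_C)\bigr) \ge \tfrac32\log|A| - O(1)$; your Cauchy--Schwarz remark simply verifies the support-size bound directly.
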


\begin{proof}
Let $U_A$, $U_B$, and $U_C$ be independent uniform random variables on $A$, $B$, and $C$, respectively. Then,
$$\log\bigl| A(B+C)\bigr| \ge \Eta\bigl( U_A(U_B+U_C) \bigr)
\ge \Eta_2\bigl( U_A(U_B+U_C)\bigr) \ge \frac32 \log |A| - O(1),$$
and the claim follows.
\end{proof}

The combinatorial bound of \Cref{cor:ABC} actually also 
follows from the more general bounds 
in~\cite[Corollary~20]{yazicietal2017},
established using combinatorial methods.
Over finite fields, 2/3 appears to be the best known exponent.
Over $\RR$, the best exponent (up to logarithmic factors)
is $32/21\approx 1.523$, 
due to Bloom~\cite{bloom2025}. 
It is in fact believed that $\bigl|A(A+A)\bigr| \gg |A|^{2-o(1)}$ and,
indeed, this would follow from the conjectured sum-product 
bound $\max\bigl\{ |A+A|, |A\cdot A|\bigr\} \ge |A|^{2-o(1)}$.

Next, we generalize \Cref{lemkohetal} to non-uniform random variables by expressing our starting random variable as a mixture of uniforms. To our knowledge, the following lemma first appeared in a paper of
Chor and Goldreich~\cite[Lemma 5]{CG88}, but we record a version given by Vadhan which is more convenient for our purposes.

\begin{lemma}
[{\rm\cite[Lemma 6.10]{vadhan2012}}\/]
\label{lemconvexcomb}
Let $X$ be a random variable with finite support $A$,
such that and $\Etamin(X) \ge m$, where $2^m$ is an integer. 
Then there is an integer $s$, nonnegative real numbers $p_1,\ldots,p_s$, 
and random variables $U_1,\ldots,U_s$ such that:
\begin{enumerate}
\item $\sum_{i=1}^s p_i = 1$;
\item each $U_i$ is uniformly distributed on some set $A_i\subseteq A$ with $|A_i| = 2^m$; and
\item $\pr(X=a) = \sum_{i=1}^s p_i \pr(U_i = a)$ for all $a\in A$.
\end{enumerate}
\end{lemma}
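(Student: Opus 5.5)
The plan is to view the statement as a fact about the vertices of a polytope. Put $K = 2^m$, an integer by hypothesis, and $n = |A|$. Every distribution on $A$ is then a vector $q = (q_a)_{a\in A}\in\RR^A$. Consider
$$\mathcal P = \Bigl\{ q\in\RR^A : 0\le q_a\le \tfrac1K \text{ for all } a\in A,\ \textstyle\sum_{a\in A} q_a = 1\Bigr\}.$$
The hypothesis $\Etamin(X)\ge m$ says exactly that $\max_a \pr(X=a)\le 2^{-m} = 1/K$, so the mass function of $X$ lies in $\mathcal P$. In particular $\mathcal P$ is nonempty, which forces $n \ge K$.

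The set $\mathcal P$ is a bounded polyhedron: it is the intersection of the cube $[0,1/K]^A$ with a hyperplane. Hence it is a polytope and equals the convex hull of its finitely many vertices (Minkowski's theorem, or Carathéodory in finite dimension). So, writing $p_1,\dots,p_s$ for the convex weights and $U_1,\dots,U_s$ for random variables whose laws are the vertices used, conditions i) and iii) follow immediately. Carathéodory even gives $s\le n$, though this is not needed.

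The main step is to identify the vertices, which gives ii). Let $v$ be a vertex of $\mathcal P$. In $\RR^n$ a vertex needs $n$ linearly independent tight constraints. The equality $\sum_a v_a = 1$ supplies only one, so at least $n-1$ of the box constraints must be tight, and each of those coordinates lies in $\{0, 1/K\}$. Hence at most one coordinate $v_{a_0}$ can lie strictly between $0$ and $1/K$. If $j$ coordinates equal $1/K$, then
$$v_{a_0} = 1 - \frac jK \in \frac1K\ZZ,$$
and a number in $\frac1K\ZZ$ that lies in $(0,1/K)$ does not exist. This is the one place where integrality of $2^m$ is used. Therefore every coordinate of $v$ is $0$ or $1/K$. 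Since the coordinates sum to $1$, exactly $K$ of them equal $1/K$. So $v$ is the law of a uniform random variable on a $K$-element subset $A_i\subseteq A$, which is ii).

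The only delicate point is this vertex characterization, and it is a short linear-algebra check; everything else is standard convexity. If one prefers an explicit construction to the polytope argument, the alternative is a greedy induction on the number of coordinates of $q$ lying strictly between $0$ and $1/K$:
\begin{itemize}
\item choose a flat distribution $u$ on a $K$-set that contains every coordinate equal to $1/K$;
\item subtract as large a multiple $\lambda u$ from $q$ as keeps $(q-\lambda u)/(1-\lambda)$ inside $\mathcal P$;
\item observe that this makes at least one new constraint tight, so the count decreases.
\end{itemize}
I would present the polytope version, since it is cleaner.
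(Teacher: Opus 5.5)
Your argument is correct and complete. The mass function of $X$ lies in the polytope $\{q : 0\le q_a\le 2^{-m},\ \sum_a q_a=1\}$, every point of a polytope is a convex combination of its vertices, and your tight-constraint count plus the integrality of $2^m$ shows that every vertex is uniform on a $2^m$-element subset of $A$. The paper gives no proof of its own; it imports the lemma from Vadhan (originally Chor--Goldreich), and your polytope-vertex argument is essentially the standard proof of this fact.
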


Using this, we can give a lower bound on the entropy of any 
random variable of the form $X(Y+Z)$. 

\begin{lemma}
\label{lemHminlowerbound}
Let $F$ be a field, and let $X$, $Y$, and $Z$ be independent finitely supported random variables taking values in $F$. Suppose that
$$\min\bigl\{ \Etamin(X), \Etamin(Y), \Etamin(Z)\bigr\} \ge m,$$
where $2^m$ is an integer and,
if the characteristic of the field is $p>0$, assume further 
that $m\le (2/3)\log p - O(1)$. Then
$$
\Eta\bigl(X(Y+Z)\bigr) \ge \frac32 m - O(1).
$$
\end{lemma}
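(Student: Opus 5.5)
Apply \Cref{lemconvexcomb} to each of $X$, $Y$ and $Z$ separately, reduce to the uniform case, and finish with \Cref{lemkohetal}. Since $\Etamin(X)\ge m$ with $2^m$ an integer, \Cref{lemconvexcomb} writes the law of $X$ as a mixture $\sum_i p_i\,\mathrm{Law}(U_i)$, where each $U_i$ is uniform on a set $A_i$ with $|A_i|=2^m$. In the same way the law of $Y$ is $\sum_j q_j\,\mathrm{Law}(V_j)$ with $V_j$ uniform on $B_j$, and the law of $Z$ is $\sum_k r_k\,\mathrm{Law}(W_k)$ with $W_k$ uniform on $C_k$, all of size $2^m$. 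We realize this coupling explicitly. Take independent index variables $I,J,K$ with $\pr(I=i)=p_i$, $\pr(J=j)=q_j$ and $\pr(K=k)=r_k$. Conditionally on $(I,J,K)=(i,j,k)$, draw $X,Y,Z$ independently and uniformly from $A_i,B_j,C_k$. Then $(X,Y,Z)$ has the correct joint law, since the three coordinates are independent with the right marginals.

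Since conditioning reduces entropy,
$$\Eta\bigl(X(Y+Z)\bigr)\ge \Eta\bigl(X(Y+Z)\given I,J,K\bigr)=\sum_{i,j,k}p_iq_jr_k\,\Eta\bigl(U_{A_i}(U_{B_j}+U_{C_k})\bigr).$$
Each summand satisfies $\Eta\ge\Eta_2$. The sets $A_i,B_j,C_k$ all have the common size $2^m$. If $p>0$, the hypothesis $m\le(2/3)\log p-O(1)$ gives $2^m\ll p^{2/3}$ with the constant matching the one required in \Cref{lemkohetal}; we choose the $O(1)$ in the statement so that these agree. Then \Cref{lemkohetal} gives $\Eta_2\bigl(U_{A_i}(U_{B_j}+U_{C_k})\bigr)\ge \frac32 m-O(1)$ uniformly in $(i,j,k)$. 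Averaging over $(i,j,k)$ yields the claim.

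The main point needing care is the coupling step: the mixture decomposition has to be realized by \emph{independent} latent indices, so that conditionally on $(I,J,K)$ the three variables are still independent uniforms. Without that conditional independence, \Cref{lemkohetal} could not be applied. A second check is that the implied constant in \Cref{lemkohetal} is uniform over all choices of the sets $A_i,B_j,C_k$. This holds because that constant comes only from \Cref{lempointplane}. The rest is routine.
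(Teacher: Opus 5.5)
Your proposal is correct and follows essentially the same route as the paper: write each of $X,Y,Z$ as a mixture of uniforms on sets of size $2^m$ via \Cref{lemconvexcomb}, condition on the mixing indices, and apply \Cref{lemkohetal} (through $\Eta\ge\Eta_2$) to each conditional term before averaging. Your explicit point that the three index variables must be independent, so that $X,Y,Z$ remain independent uniforms given the indices, is an assumption the paper's proof uses without stating it.
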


\begin{proof}
Let $s$, $0\le p_1,\ldots,p_s\le 1$, and $U_1,\ldots,U_s$, be as
in the conclusion of \Cref{lemconvexcomb} applied to $X$.
Let $V$ be the random variable on $[s]$ with $\pr(V=i) = p_i$ 
for all $i\in [s]$, independent of $U_1,\ldots,U_s$, so that
$X$ can be expressed as $X=U_V$.
Repeating the same procedure with $Y$ and $Z$, gives
corresponding 
$s'$, $0\le p'_1,\ldots,p'_{s'}\le 1$, $U'_1,\ldots,U'_{s'}$, $V'$
and
$s''$, $0\le p''_1,\ldots,p''_{s''}\le 1$, $U''_1,\ldots,U''_{s''}$, $V''$.
Then we can bound
\begin{equation}\begin{aligned}
\Eta\bigl(X(Y+Z)\bigr) &\ge \Eta\bigl(X(Y+Z) \given V, V', V''\bigr)\cr
&= \ex_{(i,j,k)\sim (V,V',V'')} \big[\Eta\bigl(U_i(U_j'+U_k'')\bigr)\big]\cr
&\ge \ex_{(i,j,k)\sim (V,V',V'')} \biggl( \frac32\log |A_i| - O(1)\biggr) \cr
&= \frac32 m - O(1), 
\end{aligned}\end{equation}
where the inequality follows by \Cref{lemkohetal}.
\end{proof}

The following lemma gives 
an upper bound for the entropy $\Eta\bigl( X(Y+Z)\bigr)$. 
The result is a slightly modified version of a bound
due to M\'ath\'e and O'Regan, which was included
as Proposition~4.1 in the original preprint 
version of~\cite{matheoregan}. As it does not appear
in the published version of their paper,
we offer a proof for the sake of completeness.

\begin{lemma}\label{lemMO}
For any random variables $X$, $Y$, and $Z$ taking values in a field,
\begin{align}\label{eqMOone}
\Eta\bigl( X(Y+Z)\bigr) \le \Eta(XY, XZ) + &\Eta(X, Y+Z) - \Eta(X,Y,Z) \cr
&+ \pr(X=0)\Eta(Y,Z\given Y+Z,X=0).
\end{align}
In particular, when $X$, $Y$, and $Z$ are independent 
and identically distributed, 
\begin{align}\label{eqMOtwo}
\Eta\bigl( X(Y+Z)\bigr) &\le 2\Eta(XY) + \Eta(X+Y) - 2\Eta(X) + \pr(X=0)\bigl(2\Eta(X) - \Eta(X+Y)\bigr) \cr
&= 2\Eta(XY) - \pr(X\ne 0)\bigl(2\Eta(X) - \Eta(X+Y)\bigr)\cr
\end{align}
\end{lemma}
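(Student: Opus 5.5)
The plan is to derive \eqref{eqMOone} from the submodularity of entropy, using the fact that $X(Y+Z)$ can be computed in two different ways. Set $A=(XY,XZ)$ and $B=(X,Y+Z)$, and write $W=X(Y+Z)$. Then $W=XY+XZ$ is a function of $A$, and $W=X\cdot(Y+Z)$ is a function of $B$.

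The first step is the standard inequality that holds whenever $W$ is a function of $A$ and also a function of $B$:
$$\Eta(W)+\Eta(A,B)\le \Eta(A)+\Eta(B).$$
It follows from $\Eta(A,B)+\Eta(W)\le \Eta(A,W)+\Eta(B,W)$, which is submodularity (nonnegativity of $I(A;B\given W)$), together with $\Eta(A,W)=\Eta(A)$ and $\Eta(B,W)=\Eta(B)$. Consequently
$$\Eta(W)\le \Eta(XY,XZ)+\Eta(X,Y+Z)-\Eta(A,B),$$
and it remains to bound $\Eta(A,B)$ from below by $\Eta(X,Y,Z)$ minus the correction term.

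The second step does this, and it is the only place where any care is needed, namely the event $\{X=0\}$. Since $(A,B)$ is a function of $(X,Y,Z)$, we have
$$\Eta(A,B)=\Eta(X,Y,Z)-\Eta(X,Y,Z\given A,B)=\Eta(X,Y,Z)-\Eta(Y,Z\given X,Y+Z,XY,XZ).$$
Now split the conditional entropy according to the value of $X$, which is visible in the conditioning.
\begin{itemize}
\item On $\{X=x\}$ with $x\neq 0$, the pair $(XY,XZ)$ determines $(Y,Z)$, so this part contributes zero.
\item On $\{X=0\}$, the variables $XY$ and $XZ$ are identically $0$ and carry no information. This part therefore contributes exactly $\pr(X=0)\,\Eta(Y,Z\given Y+Z,X=0)$.
\end{itemize}
Combining this with the first step gives \eqref{eqMOone}.

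For \eqref{eqMOtwo}, specialise to i.i.d.\ $X,Y,Z$.
\begin{itemize}
\item By subadditivity, $\Eta(XY,XZ)\le \Eta(XY)+\Eta(XZ)=2\Eta(XY)$, since $XZ$ has the same law as $XY$.
\item By independence, $\Eta(X,Y+Z)=\Eta(X)+\Eta(X+Y)$ and $\Eta(X,Y,Z)=3\Eta(X)$.
\item Since $X$ is independent of $(Y,Z)$, we have $\Eta(Y,Z\given Y+Z,X=0)=\Eta(Y,Z\given Y+Z)=\Eta(Y,Z)-\Eta(Y+Z)=2\Eta(X)-\Eta(X+Y)$.
\end{itemize}
Substituting these gives the first line of \eqref{eqMOtwo}. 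The second line is the algebraic identity
$$\Eta(X+Y)-2\Eta(X)+\pr(X=0)\bigl(2\Eta(X)-\Eta(X+Y)\bigr)=-\pr(X\neq 0)\bigl(2\Eta(X)-\Eta(X+Y)\bigr).$$

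The expected main obstacle is the bookkeeping on $\{X=0\}$. This is precisely where the original uncorrected version broke down: without the extra term one would wrongly assert that $(A,B)$ determines $(X,Y,Z)$. With the correction term kept explicit, the rest is routine.
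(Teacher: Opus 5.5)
Your proposal is correct and follows the paper's proof essentially step for step. Both apply submodularity to the two determining tuples $(XY,XZ)$ and $(X,Y+Z)$ of $X(Y+Z)$, then evaluate $\Eta(X,Y,Z\given X,Y+Z,XY,XZ)$ by splitting on the value of $X$, so that only $\{X=0\}$ contributes. Your i.i.d.\ specialization matches the paper's, and your case split is stated more cleanly than the paper's corresponding identity, whose extra $X=1$ term is zero anyway.
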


\begin{proof}
It is clear how~\eqref{eqMOone} implies~\eqref{eqMOtwo}; 
we shall prove the former.
The pairs $(X,Y+Z)$ and $(XY, XZ)$ separately determine $X(Y+Z)$. Hence by the submodularity inequality for entropy~\cite{tao2010},
\[\Eta\bigl(X(Y+Z)\bigr)
\le \Eta(X, Y+Z) + \Eta(XY, XZ) - \Eta(X,Y+Z,XY,XZ).\]
But $(X,Y+Z,XY,XZ)$ determines $(X,Y,Z)$ when $X\ne 0$,
so since 
\begin{align*}
\Eta(X,Y,Z \given X,Y+Z,XY,XZ) &= \pr(X=0) \Eta(Y,Z\given Y+Z, X=0)  \\
& \quad - \pr(X=1) \Eta(Y,Z\given Y+Z,XY,XZ, X=1),
\end{align*}
we see that
\begin{align}
\Eta(X,Y+Z,XY,XZ)
&= \Eta(X,Y,Z) - \Eta(X,Y,Z \given X,Y+Z,XY,XZ) \cr
&= \Eta(X,Y,Z) - \pr(X=0) \Eta(Y,Z\given Y+Z, X=0) \cr
\end{align}
from which~\eqref{eqMOone} follows.
\end{proof}

We can now prove \Cref{thmminentropy}.

\begin{proof}[Proof of \Cref{thmminentropy}.]
If $2\Eta(X)\ge 3\Etamin(X)$, then
$$\Eta(X+X')\ge \Eta(X) \ge \frac23\Eta(X)
+ \frac{\Etamin(X)}{2}\ge \frac{4 \Eta(X) + 3\Etamin(X)}{6}.$$
So for the remainder of the proof, we assume that $2\Eta(X) < 3\Etamin(X)$. 

Let $X$ be finitely supported in a field $F$. If $F$ has characteristic $p>0$, then assume that $\Etamin(X) \le (2/3)\log p - O(1)$. 
Let $X'$ be an independent copy of $X$, 
and write $m = \lfloor \Etamin(X)\rfloor$. 
Applying Lemma~\ref{lemHminlowerbound} and
Lemma~\ref{lemMO} gives
\begin{align}
\max\bigl\{ \Eta(X+X'), \Eta(XX') \bigr\}
&\ge
\frac{4\pr(X\ne 0)\Eta(X)+3m}
     {4+2\pr(X\ne 0)}
-O(1) \cr
&\ge
\frac{4\pr(X\ne 0)\Eta(X)+3\Etamin(X)}{6}
-O(1).
\end{align}
To complete the proof, we simply need to show that
$\pr(X=0)\Eta(X) = O(1)$. Since we assumed that $\Etamin(X) > (2/3)\Eta(X)$, we have
\[
\pr(X=0)\Eta(X)
\le \Eta(X)2^{-\Etamin(X)}
\le \Eta(X)2^{-2\Eta(X)/3}.
\]
This function of $\Eta(X)$ attains a global maximum, so the proof is complete.
\end{proof}

It is clear from the proof above that if the 
coefficient $3/2$ is \Cref{lemkohetal} were improved to 2
(perhaps with nonconstant error terms), then we would obtain an exponent 
of $4/3$ for the combinatorial sum-product problem; over $\FF_p$ this 
would be a new result. Furthermore, this would also imply the
combinatorial bound $\bigl|A(A+A)\bigr| \gg |A|^{2-o(1)}$
mentioned earlier. However, this conjectured bound appears to be
out of the range of current techniques.

\paragraph{Randomness extraction}
We conclude this section with a few remarks on \Cref{lemHminlowerbound} and its link to randomness extraction.
Suppose there is a source of randomness with min-entropy $m$ from 
which one may repeatedly obtain independent samples, from which they
are asked to produce a random variable that is close in total 
variation distance to a uniform random variable on $\{0,1\}^n$.
A theorem of Barak, Impagliazzo, and Wigderson~\cite{biw2006} 
describes the performance of one such scheme.

\begin{theorem}[{\rm\cite[Theorem 1.1]{biw2006}}\/]\label{thmBIW}
There exists an absolute constant $C$ such that for any $\delta > 0$, given $(1/\delta)^{C}$ independent samples of a random variable with min-entropy $m \ge\delta n$, one may produce a random variable that is at total variation distance at most $1/2^{\Omega(n)}$ from a uniform random variable on $\{0,1\}^n$.
\end{theorem}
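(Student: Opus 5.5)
Since \Cref{thmBIW} is quoted from~\cite{biw2006}, this is only a sketch of how I would reconstruct it using the tools of this section. The idea is to build an explicit extractor by iterating a sum-product \emph{condenser} and finishing with a classical two-source extractor.

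Fix a prime $p$ with $2^n < p < 2^{n+1}$ and view each sample as an element of $\FF_p$; an injective encoding preserves min-entropy. Working over a prime field rules out the subfield obstructions that would otherwise block sum-product growth.

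\textbf{Condenser step.} The core claim is this. Let $X,Y,Z$ be independent with min-entropy at least $k$, where $\delta n\le k\le (1-\delta')\log p$. Then $XY+Z$ is $2^{-\Omega(k)}$-close in total variation to a source of min-entropy at least $(1+\alpha)k$, with $\alpha=\alpha(\delta)>0$. I would prove it in three stages.
\begin{enumerate}
\item By \Cref{lemconvexcomb}, reduce to the case where $X,Y,Z$ are uniform on sets $A,B,C$ of size $2^k$. This is the same mixture argument as in \Cref{lemHminlowerbound}, and an average of close distributions is close.
\item For flat sources, bound the collision probability $\pr\bigl(ab+c=a'b'+c'\bigr)$ by counting incidences between points and lines in $\FF_p^2$. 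Via the Bourgain--Katz--Tao point-line bound, this count is at most $|A|^{3-\alpha'}$, which gives $\Eta_2(XY+Z)\ge(1+2\alpha)k$. This is the analogue of the point-plane argument in \Cref{lemkohetal}, with the $p^{2/3}$ restriction replaced by the range where the finite-field Szemer\'edi--Trotter bound holds.
\item Convert the collision-entropy bound into closeness to high min-entropy. The standard route is to discard the heavy atoms: the points carrying probability above $2^{-(1+\alpha)k}$ have total mass at most $2^{-\alpha k}$.
\end{enumerate}

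\textbf{Iteration.} Arrange the samples in a ternary tree of depth $t$ and apply the condenser at each internal node. The min-entropy rate then grows from $\delta$ to $\delta(1+\alpha)^t$, so after $t=O(\log(1/\delta)/\alpha)$ levels the rate exceeds $0.9$. The subtrees use disjoint samples, so the three inputs at each node remain independent. The errors add up to at most $3^t\,2^{-\Omega(\delta n)}$. This uses $3^t=(1/\delta)^{O(1)}$ samples, provided $\alpha$ depends polynomially on $\delta$ in the right way; the exponent $C$ is determined by that dependence.

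\textbf{Final extraction.} Once two independent sources over $\FF_p$ (or $\FF_p^\ell$) each have rate above $1/2$, the Chor--Goldreich inner-product extractor (Lindsey's lemma) outputs something $2^{-\Omega(n)}$-close to uniform on $\FF_p$. Truncating to the low $n$ bits then costs only an additional $O(2^n/p)$-type error from the choice of $p$; I would pick $p$ close to a power of $2$, or extract fewer bits, to keep this error exponentially small.

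\textbf{Main obstacle.} The hardest step is the quantitative condenser. It needs a finite-field incidence bound with a gain $\alpha$ that is explicit in $\delta$ and valid all the way up to rate close to $1$, where sum-product growth degenerates. Getting $\alpha$ good enough that the number of samples stays polynomial in $1/\delta$ is also delicate. My first attempt would be the Bourgain--Katz--Tao bound as quantified by Helfgott--Rudnev, accepting a weaker exponent $C$ if necessary.
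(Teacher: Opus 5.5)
Your outline matches the route the paper attributes to Barak--Impagliazzo--Wigderson: embed in $\FF_p$, iterate $XY+Z$ along a ternary tree of disjoint samples, and build the condenser from a flat-source decomposition, a collision bound via incidences, and heavy-atom truncation. The paper itself gives no proof beyond citing~\cite[Lemma~3.1]{biw2006}. The flat-source reduction, the truncation step and the hybrid error count are fine, but two steps fail as written.

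\textbf{The final extraction step.} For two $\FF_p$-valued sources the ``inner product'' is just $XY$, and rate above $1/2$ does not make it close to uniform. If $g$ generates $\FF_p^*$ and $X,Y$ are uniform on $\{g^i:0\le i<p^{0.6}\}$, both have rate $0.6$, but $XY$ is supported on at most $2p^{0.6}+1$ points. Lindsey's lemma bounds each nontrivial Fourier coefficient of $XY$ by $\bigl(p\,2^{-\Etamin(X)-\Etamin(Y)}\bigr)^{1/2}$, but summing over the $p-1$ characters loses a factor $\sqrt p$. Over $\FF_p^\ell$, with each coordinate a separate condensed sample, Parseval gives total variation at most $\tfrac12\bigl(p^{\ell+1}2^{-\Etamin(X)-\Etamin(Y)}\bigr)^{1/2}$. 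So you need rate above $\tfrac12+\tfrac1{2\ell}$, not merely above $\tfrac12$; at rate $0.9$, $\ell=2$ works. Alternatively, apply $XY+Z$ once more. Since $\sum_t\lvert\ex e^{2\pi i tZ/p}\rvert^2=p\,\pr(Z=Z')$, you get
$d_{\mathrm{TV}}(XY+Z,U_{\FF_p})\le\tfrac12\bigl(p^2\,2^{-\Etamin(X)-\Etamin(Y)-\Etamin(Z)}\bigr)^{1/2}\le p^{-0.35}$ at rate $0.9$. So you never need the condenser near rate $1$, and your stated ``main obstacle'' disappears. Also, the truncation error to $\{0,1\}^n$ is $O\bigl((p-2^n)/p\bigr)$, not $O(2^n/p)$. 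You therefore need a prime in $[2^n,2^n+2^{cn}]$ with $c<1$, e.g.\ from Baker--Harman--Pintz.

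\textbf{The sample count, which is the content of ``absolute constant $C$''.} With per-level gain $\alpha$, going from rate $\delta$ to $0.9$ takes $t=\log(0.9/\delta)/\log(1+\alpha)$ levels, hence $3^t=(0.9/\delta)^{\log 3/\log(1+\alpha)}$ samples. This is $(1/\delta)^{C}$ with absolute $C$ only if $\alpha$ is bounded below by an absolute constant uniformly for rates in $[\delta,0.9]$. A polynomially small gain $\alpha=\delta^c$ gives $\exp\bigl(\Theta(\delta^{-c}\log(1/\delta))\bigr)$ samples. So your claim that $\alpha=\alpha(\delta)$ ``depending polynomially on $\delta$'' suffices is false. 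You need an incidence or sum-product exponent that does not degenerate for small sets; this is the role of Konyagin's refinement of Bourgain--Katz--Tao in~\cite{biw2006}. It is also exactly how the paper's corollary gets $C=\log_{3/2}3$ from the absolute gain $3/2$ in \Cref{lemHminlowerbound}. Your tools can supply this: Helfgott--Rudnev is uniform while the configuration has fewer than $p$ points, i.e.\ rate below about $1/2$, and Bourgain--Katz--Tao with the fixed cutoff $0.9$ covers the rest. But the argument as written asserts the wrong condition.

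\textbf{The incidence step.} For $\lvert A\rvert=\lvert B\rvert=\lvert C\rvert=N$, what must be shown is $\#\{ab+c=a'b'+c'\}\le N^{5-2\alpha}$. This collision count is not itself a balanced point--line count. You need to pigeonhole a set $S$ of popular values and count incidences between $B\times S$ and the lines $y=ax+c$ with $(a,c)\in A\times C$. That configuration is nearly balanced only because each value has at most $N^2$ representations.
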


Their idea is to embed the problem into the prime field $\FF_p$ for some 
prime $p\in [2^n, 2^{n+1})$, and then iterate the function 
$(X,Y,Z)\mapsto XY+Z$. A key intermediate step,
\cite[Lemma~3.1]{biw2006} in the proof, states that for some 
small $\eps>0$ the random variable $XY+Z$ is at total variation distance 
less than $1/2^{\eps m}$ from a random variable with min-entropy $(1+\eps)m$. 
Our \Cref{lemHminlowerbound} improves upon this, since $X(Y+Z)$ requires 
the same number of operations to compute as $XY+Z$, there is no statistical 
error when measuring the min-entropy of the output, and we are able to 
take $\eps = 1/2$ whereas the corresponding constant obtained 
in~\cite{biw2006} was small and unspecified.

By appropriately modifying the Barak--Impagliazzo--Wigderson argument 
to iterate the function $(X,Y,Z) \mapsto X(Y+Z)$ instead of 
$(X,Y,Z)\mapsto XY+Z$, the quantitative improvement offered by
\Cref{lemHminlowerbound} implies the following corollary.

\begin{corollary}
\Cref{thmBIW} holds with $C = \log_{3/2}(3)\approx 2.71$.
\end{corollary}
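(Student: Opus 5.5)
The plan is to rerun the Barak--Impagliazzo--Wigderson construction with the map $(X,Y,Z)\mapsto X(Y+Z)$ in place of $(X,Y,Z)\mapsto XY+Z$, and to track the per-level growth of min-entropy. That growth factor is what determines the exponent $C$. Fix a prime $p\in[2^n,2^{n+1})$ and embed the source into $\FF_p$. Starting from samples of min-entropy $m_0\ge\delta n$, build a ternary tree of depth $\ell$. Each internal node applies $(X,Y,Z)\mapsto X(Y+Z)$ to the outputs of its three children, and the leaves are independent samples. This uses $3^\ell$ samples in total. Once the min-entropy rate reaches a fixed constant $\rho\in(4/9,2/3)$, we hand the output to the constant-rate case of \Cref{thmBIW}. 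That final step needs only $O(1)$ further independent copies, i.e.\ a constant factor more samples.

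The single-step estimate needed is the following. Suppose $X,Y,Z$ are independent with min-entropy at least $m$, where $2^m$ is an integer and $m\le(2/3)\log p-O(1)$. First write each of $X,Y,Z$ as a mixture of flat distributions on sets of size $2^m$ via \Cref{lemconvexcomb}. Then, by convexity of $w\mapsto\sum w^2$, the collision probability of the mixture is at most the average of the collision probabilities of the components. Combined with \Cref{lemkohetal}, this gives $\Eta_2\bigl(X(Y+Z)\bigr)\ge\frac32 m-O(1)$. Note that this is the collision-entropy version of \Cref{lemHminlowerbound}, which we obtain by running its proof with $\Eta_2$ in place of $\Eta$.

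Next we convert collision entropy back to min-entropy at a small statistical cost. If $\Eta_2(W)\ge k$, the set of points of mass above $2^{-(k-t)}$ has total mass at most $2^{-t}$. Hence $W$ is $2^{-t}$-close in total variation to a variable with min-entropy at least $k-t-1$. We also round down so that $2^m$ is an integer, which costs at most one bit. Since the three inputs at each node are independent and only close to the idealized variables, the errors add across the $3^\ell$ nodes; we handle this by a coupling argument exactly as in \cite{biw2006}.

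With these pieces, the recursion is $m_{j+1}\ge\frac32 m_j-t_j-O(1)$. Take the loss $t_j$ to be a small fixed fraction of $m_j$, or the same per-level error budget that \cite{biw2006} uses. Then $m_j\ge\bigl(\tfrac32-o(1)\bigr)^j\delta n$, so depth $\ell=\log_{3/2}(1/\delta)+O(1)$ suffices to reach rate $\rho$, and the sample count is $3^\ell=O\bigl((1/\delta)^{\log_{3/2}3}\bigr)$. The restriction $m\le(2/3)\log p-O(1)$ in \Cref{lemkohetal} is respected throughout, because we stop as soon as the rate reaches $\rho<2/3$.

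The main obstacle is the error bookkeeping. The $2^{-t_j}$ losses must be small enough that the growth factor stays $3/2$ rather than $3/2-\eta$, since any fixed loss of a fraction $\eta$ per level degrades the exponent to $\log_{3/2-\eta}3$. At the same time the accumulated total variation error must remain $2^{-\Omega(n)}$. I would try taking $t_j$ to be a small fraction of $m_j$ that shrinks with $\delta$, accepting error $2^{-\Omega(t_0)}$ from the low levels. If that error is too large, I would adopt BIW's own treatment of error in their iteration. There the exponent is governed purely by the per-level growth factor, so the substitution $1+\eps\mapsto 3/2$ and branching $3$ should give $C=\log_{3/2}3$ up to lower-order slack absorbed into the constants.
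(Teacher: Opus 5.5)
Your proposal is correct and follows the paper's route: rerun the Barak--Impagliazzo--Wigderson tree with $(X,Y,Z)\mapsto X(Y+Z)$, so that a per-level growth factor of $3/2$ with branching $3$ gives $3^{\log_{3/2}(1/\delta)+O(1)}=O\bigl((1/\delta)^{\log_{3/2}3}\bigr)$ samples. Your use of the collision-entropy form of \Cref{lemHminlowerbound}, followed by smoothing to min-entropy at cost $2^{-t}$, is exactly the detail the paper's one-line sketch leaves implicit (that lemma bounds only Shannon entropy, and some statistical error is unavoidable since $\pr\bigl(X(Y+Z)=0\bigr)\ge\pr(X=0)$), and your bookkeeping worry is settled by taking $t_j=c\,\delta n$ for a small constant $c$: the relative losses $t_j/m_j$ then decay geometrically and cost only a constant factor in $3^\ell$, while the accumulated error is at most $3^\ell 2^{-c\delta n}=2^{-\Omega_\delta(n)}$.
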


\section{Improved coefficients over the real numbers}

The aim of this brief section is to prove \Cref{thmminentropyreal}. 
Specifically, we will show that the expression $\bigl(4\Eta(X) + 3m\bigr) / 6$ 
in \Cref{thmminentropy} can be improved to $\bigl(4\Eta(X) + 2m\bigr)/5$ 
(at the cost of an extra logarithmic term) for the special case $F=\RR$. 

Whereas in the previous section we bounded the entropy of the random 
variable $X(Y+Z)$ from above and below, here we work with 
a random variable of the form $(X+Y)(Z+W)$. For the
lower bound, we will use the following inequality due to
Roche-Newton and Rudnev.

\begin{proposition}[{\rm\cite[Proposition~4]{roru2015}}]
\label{proproru}
Let $P\subseteq \RR^2$ be a finite set of points in the plane. Selecting $(a,c)$, $(b,d)$, $(a',c')$, and $(b',d')$ uniformly at random from $P$, we have
$$
\pr\bigl( (a-b)(c-d) = (a'-b')(c'-d')\ne 0\bigr) 
\ll \frac{\log|P|}{|P|}.
$$
\end{proposition}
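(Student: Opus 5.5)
The plan is to follow the Elekes--Sharir--Guth--Katz approach. We recast the count of pairs of pairs with equal nonzero ``hyperbolic area'' as a count of intersecting lines in $\RR^3$, and then apply the Guth--Katz incidence bound for lines.

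Write $n=|P|$ and let
$$N = \Bigl|\bigl\{(p,q,p',q')\in P^4 : Q(p-q)=Q(p'-q')\ne 0\bigr\}\Bigr|, \qquad Q(x,y)=xy.$$
The statement is equivalent to $N\ll n^3\log n$. The first step is to use the symmetry group of $Q$. The maps $g_{t,u,v}\colon (x,y)\mapsto (tx+u,\; y/t+v)$ with $t\ne0$ preserve $Q$ of differences. Suppose $p-q=(\alpha,\beta)$ and $p'-q'=(\alpha',\beta')$ satisfy $\alpha\beta=\alpha'\beta'\neq0$. Then the unique such map with $g(p)=p'$ and $g(q)=q'$ is obtained by taking $t=\alpha'/\alpha$, so that $\beta'=\beta/t$ automatically, with $u,v$ fixed by $g(p)=p'$. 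Hence every counted quadruple corresponds to exactly one $g$ in this three-parameter family, and
$$N=\sum_g r(g)^2, \qquad r(g)=\bigl|\{p\in P: g(p)\in P\}\bigr|.$$
(The pairs $p=q$ are excluded automatically by the nonzero condition.)

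Next I linearise. I use coordinates $(t,u,w)$ with $w=tv$. The set of $g$ sending $p=(x,y)$ to $p'=(x',y')$ is $u=x'-tx$ and $w=ty'-y$, which is a line
$$L_{p,p'}=\{(t,\,x'-tx,\,ty'-y): t\in\RR\}\subseteq\RR^3.$$
This gives a family $\mathcal L$ of $n^2$ distinct lines. Then $r(g)$ is the number of lines of $\mathcal L$ through the point $g$, restricted to $t\ne0$. So $N$ is $\sum_k k^2\,\#\{\text{points lying on exactly } k \text{ lines}\}$, and we bound this by dyadic summation using Guth--Katz. For a family of $L$ lines with at most $O(\sqrt L)$ in any plane or regulus, the number of points on at least two lines is $O(L^{3/2})$, and for $k\ge3$ the number of points on at least $k$ lines is $O(L^{3/2}/k^2)$. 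Summing $k^2\cdot L^{3/2}/k^2$ over dyadic ranges of $k$ up to $n$ gives $O(L^{3/2}\log n)=O(n^3\log n)$, where $L=n^2$. Points with $t=0$ are discarded, since they do not correspond to group elements, and the $k=1$ contribution is the trivial diagonal of size $O(n^2)$. Dividing by $n^4$ gives the claimed probability bound.

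The main obstacle is verifying the non-degeneracy hypothesis of Guth--Katz, namely that no plane or regulus contains more than $O(n)$ lines of $\mathcal L$. The approach I would try first is the standard one for the Erd\H{o}s distance problem. Lines $L_{p,p'}$ with a common $p$ have direction $(1,-x,y')$ and meet the plane $t=0$ at $(0,x',-y)$. A plane containing many lines must therefore, after projecting, force many $p$ (or many $p'$) to be collinear in a manner that caps the count at $O(n)$. For a regulus, I would use the fact that its two rulings consist of pairwise skew lines and check that all lines through a point form a single family, which again gives $O(n)$. If a regulus or plane does carry more than $O(n)$ lines, I would bound the contribution of such special surfaces separately: each can contain at most $O(n)$ pairwise-intersecting configurations per point, so the excess is at most $O(n^3)$ in total.
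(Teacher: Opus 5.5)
The paper gives no proof of this proposition; it imports it from Roche-Newton and Rudnev, whose argument is also an Elekes--Sharir--Guth--Katz reduction to lines in $\RR^3$. Your group $g_{t,u,v}$ and the lines $L_{p,p'}$ are a correct linearisation. However, there is a genuine gap at exactly the step you flagged, and your proposed repair fails.

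First, $N=\sum_g r(g)^2$ is not an identity. The sum must in any case be restricted to $r(g)\ge2$, since every line has infinitely many $1$-rich points. Even so, the right-hand side also counts every quadruple with $p-q$ and $p'-q'$ both vertical or both horizontal, where $Q(p-q)=Q(p'-q')=0$. Only $N\le\sum_{r(g)\ge2}r(g)^2$ holds, and that sum is not $O(n^3\log n)$ in general: if $P\subseteq\{x=0\}$ then $N=0$, but $\sum_g r(g)(r(g)-1)=n^2(n-1)^2$.

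Correspondingly, the Guth--Katz hypothesis is false for $\mathcal L$. A plane $\alpha t+\beta u+\gamma w=\delta$ contains $L_{p,p'}$ iff $\beta x-\gamma y'=\alpha$ and $\beta x'-\gamma y=\delta$. When $\beta\gamma\ne0$, $p$ determines $p'$, so such a plane holds at most $n$ lines. But the plane $u=x_0'-tx_0$ contains every $L_{p,p'}$ with $x_p=x_0$, $x_{p'}=x_0'$, and the plane $w=ty_0'-y_0$ contains every $L_{p,p'}$ with $y_p=y_0$, $y_{p'}=y_0'$; these can carry $\Theta(n^2)$ lines. So collinearity does not cap the count at $O(n)$. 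Your fallback is also wrong: in the example above a single plane produces an excess of order $n^4$, not $O(n^3)$.

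The missing idea is that these planes encode exactly the excluded quadruples. Two lines of $\mathcal L$ meeting at a point with $t\ne0$ lie in a common plane of one of these two types if and only if $p-q$ and $p'-q'$ are both vertical or both horizontal. That happens if and only if $Q(p-q)=Q(p'-q')=0$, which is why the statement has ``$\ne0$''. Thus $N$ counts the intersecting pairs of lines \emph{not} sharing such a plane, and you need an incidence bound for that count that tolerates these rich planes.

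Guth--Katz applied as a black box to $\mathcal L$ cannot supply this, and the rich planes cannot simply be set aside. If $P$ is the grid $[n^{1/3}]\times[n^{2/3}]$, every line of $\mathcal L$ lies in a plane containing $n^{4/3}$ lines, yet $N\asymp n^3\log n$. So this configuration is extremal, not negligible. An additional argument is needed here, for instance adapting the Guth--Katz proof so that its planar components only contribute excluded intersections. The regulus condition likewise still needs an actual proof rather than the sketch you give.
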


Using this proposition, we prove the following 
analogue of \Cref{lemHminlowerbound}.

\begin{lemma}\label{lemHminlowerboundreal}
Let $X$, $Y$, $Z$, and $W$ be independent finitely supported random variables taking values in $\RR$. Suppose that
$$\min\bigl\{ \Etamin(X), \Etamin(Y), \Etamin(Z), \Etamin(W)\bigr\} \ge m,$$
where $2^m$ is an integer. Then
$$\Eta\bigl( (X+Y)(Z+W)\bigr) \ge 2m - O(\log m).$$
\end{lemma}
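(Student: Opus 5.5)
We follow the template of the proof of \Cref{lemHminlowerbound}: decompose each of $X,Y,Z,W$ into a mixture of uniforms, bound the collision entropy in the uniform case using \Cref{proproru}, and then average over the mixture components.

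\textbf{Mixture decomposition.} Apply \Cref{lemconvexcomb} to each of $X$, $Y$, $Z$, $W$. This writes $X=U_V$, $Y=U'_{V'}$, $Z=U''_{V''}$, $W=U'''_{V'''}$. Each $U$ is uniform on a set of size exactly $2^m$, and the indices $V,V',V'',V'''$ are independent of one another and of the uniforms. Since conditioning reduces entropy,
$$\Eta\bigl((X+Y)(Z+W)\bigr)\ge \ex_{(i,j,k,l)}\Bigl[\Eta\bigl((U_i+U'_j)(U''_k+U'''_l)\bigr)\Bigr].$$
It therefore suffices to show that, for independent uniforms on sets $A,B,C,D\subseteq\RR$ with $|A|=|B|=|C|=|D|=2^m$,
$$\Eta\bigl((U_A+U_B)(U_C+U_D)\bigr)\ge 2m-O(\log m).$$

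\textbf{Reduction to \Cref{proproru}.} Write $S=(U_A+U_B)(U_C+U_D)$ and use $\Eta(S)\ge\Eta_2(S)=-\log\pr(S=S')$. Split the collision event according to whether $S=0$:
$$\pr(S=S')\le \pr(S=0)^2+\pr(S=S'\ne 0).$$
For the zero term, $S=0$ requires $U_A=-U_B$ or $U_C=-U_D$. Each of these events has probability at most $2^{-m}$, so $\pr(S=0)^2\le 4\cdot 2^{-2m}$.

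For the nonzero collisions, take the point set $P=(A\times C)\subseteq\RR^2$ and the set $Q=(-B)\times(-D)$. Then $U_A+U_B=a-b$ with $a\in A$, $b\in -B$, and similarly $U_C+U_D=c-d$. The difficulty is that \Cref{proproru} draws all four pairs uniformly from the \emph{same} set $P$, whereas here the pairs come from $A\times C$ and $(-B)\times(-D)$. The natural fix is to take $P'=(A\times C)\cup((-B)\times(-D))$, which has $|P'|\asymp 2^{2m}$. Drawing four points uniformly from $P'$, the event that the first and third points lie in $A\times C$ and the second and fourth in $(-B)\times(-D)$ has probability at least $2^{-4}$ (up to overlap, which only helps). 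Conditioned on that event, the draws are exactly our distribution. Hence
$$\pr(S=S'\ne 0)\le 16\,\pr_{P'}\bigl((a-b)(c-d)=(a'-b')(c'-d')\ne0\bigr)\ll \frac{\log|P'|}{|P'|}\ll m\,2^{-2m}.$$
One should check that points of $P'$ counted twice (when the two product sets overlap) do not break uniformity. Sampling from the multiset, or accepting a constant-factor loss, handles this.

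\textbf{Conclusion and main obstacle.} Combining the two terms gives $\pr(S=S')\ll m2^{-2m}$, so $\Eta_2(S)\ge 2m-\log m-O(1)$. Averaging over the mixture components yields the lemma. The main obstacle is the step just described: adapting \Cref{proproru}, which is stated for four samples from a single planar set, to four distinct sets via the union-and-condition trick, while also verifying that the zero-collision term is excluded correctly. If the union trick causes trouble, the fallback is to redo the Roche-Newton--Rudnev energy bound with $A\times C$ and $B\times D$ as separate point sets; their argument is bipartite in nature, so this should go through.
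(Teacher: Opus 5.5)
Your proposal is correct and follows essentially the paper's route: mixtures of uniforms via \Cref{lemconvexcomb}, then a collision-entropy bound for uniforms on equal-size sets obtained by embedding the four-set count into the single-set statement \Cref{proproru}. The one difference is the embedding: the paper uses the single product set $P=S\times S$ with $S=A\cup B\cup C\cup D\cup -A\cup -B\cup -C\cup -D$ (so $|P|\le 64|A|^2$) and counts directly, whereas you use $(A\times C)\cup((-B)\times(-D))$ with a factor-$16$ conditioning loss; your explicit treatment of the zero collisions, which the paper's count absorbs silently, is correct, and your overlap worry is unnecessary because the uniform distribution on your union, conditioned on a subset, is uniform on that subset.
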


\begin{proof}
First we consider random variables that are uniform on finite sets 
of the same size, as in \Cref{lemkohetal}.
Let $A,B,C,D$ be finite subsets of $\RR$ with
$|A| = |B| = |C| = |D|$ and write
$$S = A\cup B\cup C\cup D \cup -A \cup -B \cup -C\cup -D.$$
Letting $P = S^2$, we have $|P| \le 64|A|^2$. This yields the bound
\begin{equation}\begin{aligned}
\Bigl| \bigl\{ a,a'\in A, b,&b'\in B, c,c'\in C, d,d'\in D
: (a+b)(c+d) = (a'+b')(c'+d')\bigr\}\Bigr|\cr
&\le \Bigl| \bigl\{ a,a',b,b',c,c',d,d'\in S
: (a-b)(c-d) = (a'-b')(c'-d')\bigr\}\Bigr| \cr
&\le |P|^3 \log |P|\cr
&\ll |A|^6 \log |A|,
\end{aligned}\end{equation}
which implies that
$$\Eta\bigl( (U_A+U_B)(U_C+U_D)\bigr)\ge \Eta_2\bigl( (U_A+U_B)(U_C+U_D) \bigr)
\ge 2\log |A| - O\bigl(\log|A|\bigr).$$
The full claim is then proved by using \Cref{lemconvexcomb} 
to express $X$, $Y$, $Z$, and $W$ as mixtures of uniform random variables,
and proceeding exactly as in the proof of \Cref{lemHminlowerbound}.
\end{proof}

Next we establish a corresponding
upper bound for $\Eta\bigl((X+Y)(Z+W)\bigr)$.

\begin{lemma}\label{lempohoata}
Let $X,Y,Z,W$ be discrete random variables taking values in a field. Let $E$ be the event that $XZ\ne 0$ and let $E'$ be the event that $X+Y = 0$ and $Z+W=0$. Then
\[
\begin{aligned}\label{eq:pohoatageneral}
\Eta\bigl((X+Y)(Z+W)\bigr)
\le {}&
\Eta(X+Y,Z+W)+\Eta(XZ,XW,YZ) \notag\\
&-\pr(E)\Eta(X,Y,Z,W\given E) \notag +\pr(E\cap E') \notag \Eta(X,Z \given XZ, E\cap E').
\end{aligned}
\]
If $X,Y,Z,W$ are independent and identically
distributed, then
\[
\begin{aligned}\label{eq:pohoataiid}
\Eta\bigl((X+Y)(Z+W)\bigr)
\le 
2&\Eta(X+Y)+3\Eta(XY) \cr
&\qquad-2q(1+q)\Eta(X) +2q h_2(q)+1,\cr
\end{aligned}
\]
where $q=\pr(X\ne 0)$.
In particular, if $\pr(X=0)=0$, then
\[
\Eta\bigl((X+Y)(Z+W)\bigr)
\le
2\Eta(X+Y)+3\Eta(XY)-4\Eta(X)+1.
\]
\end{lemma}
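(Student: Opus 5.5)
The plan is to follow the template of the proof of \Cref{lemMO}: find two tuples that each determine $(X+Y)(Z+W)$, apply submodularity, and bound the joint entropy of the two tuples from below by $\Eta(X,Y,Z,W)$, accounting for the places where this recovery fails.

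First I would establish the general inequality. The product $(X+Y)(Z+W)$ is a function of $(X+Y,Z+W)$. It equals $XZ+XW+YZ+YW$, and on $E=\{XZ\neq 0\}$ it is a function of $(XZ,XW,YZ)$, because $YW=(XW)(YZ)/(XZ)$.

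Since the second tuple determines the product only on $E$, I would first condition on the indicator $\one_E$. This costs at most $\Eta(\one_E)\le 1$, which is where the additive $+1$ in the i.i.d.\ bound should come from; in the general inequality I expect this slack to be absorbed in the same way. On the event $E$, submodularity (exactly as in the proof of \Cref{lemMO}) gives
\[
\Eta\bigl((X+Y)(Z+W)\given E\bigr) \le \Eta(X+Y,Z+W\given E)+\Eta(XZ,XW,YZ\given E)-\Eta(X+Y,Z+W,XZ,XW,YZ\given E).
\]
On $E^c$ I would simply use the bound by $\Eta(X+Y,Z+W\given E^c)$, and then recombine the two pieces with weights $\pr(E)$ and $\pr(E^c)$.

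Next comes the recovery step. Write $s=X+Y$ and $t=Z+W$. On $E$ we have $XZ+XW=Xt$ and $XZ+YZ=Zs$. So $X$ is recovered when $t\ne0$, and $Z$ is recovered when $s\ne 0$. Once $X$ is known we get $Z=XZ/X$, and symmetrically; then $Y=s-X$ and $W=t-Z$. Hence the five-tuple determines $(X,Y,Z,W)$ on $E$ except on $E\cap E'$, where $s=t=0$. There we still know $XZ$, and given $(X,Z)$ everything else is determined. This yields
\[
\Eta(s,t,XZ,XW,YZ\given E)\ge \Eta(X,Y,Z,W\given E)-\pr(E'\given E)\,\Eta(X,Z\given XZ,E\cap E'),
\]
which produces the correction term in the general statement.

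For the i.i.d.\ specialization I would use three estimates:
\begin{itemize}
\item $\Eta(X+Y,Z+W)\le 2\Eta(X+Y)$;
\item $\Eta(XZ,XW,YZ)\le 3\Eta(XY)$, using subadditivity together with the fact that each of the three products has the law of $XY$;
\item a lower bound for $\pr(E)\Eta(X,Y,Z,W\given E)$, computed below.
\end{itemize}
For the third estimate, note that $\pr(E)=q^2$, and that conditioning on $E$ leaves $X,Z$ distributed as $X$ given $X\ne0$, independently of each other and of $Y,W$. Therefore
\[
\pr(E)\,\Eta(X,Y,Z,W\given E)=q^2\bigl(2\Eta(X\given X\neq 0)+2\Eta(X)\bigr).
\]
Using $q\,\Eta(X\given X\ne0)\ge \Eta(X)-h_2(q)$, the right-hand side is at least
\[
2q\bigl(\Eta(X)-h_2(q)\bigr)+2q^2\Eta(X) = 2q(1+q)\Eta(X)-2q\,h_2(q),
\]
which matches the claimed i.i.d.\ bound. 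Setting $q=1$ then gives the final displayed inequality, since $h_2(1)=0$ and $2q(1+q)\Eta(X)=4\Eta(X)$.

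The main obstacle is the correction term $\pr(E\cap E')\,\Eta(X,Z\given XZ,E\cap E')$, which has no visible counterpart in the i.i.d.\ bound. I would first try to bound it by $\Eta(X\given E\cap E')\cdot\pr(E\cap E')$. The idea is that, given $XZ$, the pair $(X,Z)$ is determined by $X$ alone. Moreover $\pr(E')=\pr(X+Y=0)^2\le 2^{-2\Etamin(X)}$ is tiny, which should make the term $O(1)$. If it is not tiny, I would try to absorb it together with the $\Eta(\one_E)$ slack into the constant, possibly accepting a weaker additive constant than~$1$.
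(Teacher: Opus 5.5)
There is a genuine gap, and it lies in the additive constants. Your structural steps are sound and match the paper: submodularity on $E$, recovering $(X,Y,Z,W)$ from the five-tuple off $E'$, the exact formula for the defect on $E\cap E'$, and $\pr(E)\Eta(X,Y,Z,W\given E)=2q(1+q)\Eta(X)-2qh_2(q)$.

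First, the general inequality contains no $\Eta(\one_E)$ term, so ``I expect this slack to be absorbed'' is not an argument. If you simply bound $\pr(E)\Eta(XZ,XW,YZ\given E)\le\Eta(XZ,XW,YZ)$, you prove a weaker statement with an extra $+\Eta(\one_E)$. The missing observation is that $\one_E$ is a function of $XZ$, hence of $(XZ,XW,YZ)$. By the chain rule,
\[
\Eta(\one_E)+\pr(E)\,\Eta(XZ,XW,YZ\given E)\le \Eta(\one_E)+\Eta(XZ,XW,YZ\given \one_E)=\Eta(XZ,XW,YZ).
\]
The term cannot be absorbed on the other side, since $\one_E$ is not a function of $(X+Y,Z+W)$. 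With this step the general inequality holds with no additive constant at all. So the $+1$ in the i.i.d.\ bound does \emph{not} come from $\Eta(\one_E)$, contrary to your attribution.

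Second, the $+1$ must instead pay for the correction term $\pr(E\cap E')\,\Eta(X,Z\given XZ,E\cap E')$. This needs an unconditional bound by an absolute constant less than $1$, and your heuristic does not give one. The lemma has no min-entropy hypothesis. Even when $\Etamin(X)$ is large, a small probability times $\Eta(X\given E\cap E')$ is not controlled unless you bound that conditional entropy. Accepting a constant larger than $1$ does not prove the stated inequality.

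What works, and is what the paper does, is the following. Let $s=\sum_{x\ne0}\pr(X=x)\pr(X=-x)$. Then $\pr(E\cap E')=s^2$, and $X$ given $E\cap E'$ has law $\mu(x)=\pr(X=x)\pr(X=-x)/s$. Bounding Shannon entropy by R\'enyi entropy of order $1/2$ and using $\sqrt{ab}\le (a+b)/2$ gives
\[
\Eta(X\given E\cap E')\le 2\log\sum_{x\ne0}\sqrt{\mu(x)}\le \log\frac{q^2}{s}.
\]
Hence the term is at most $s^2\log(q^2/s)=q^4u^2\log(1/u)$ with $u=s/q^2$, and $u\le1$ by Cauchy--Schwarz. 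This is at most $1/(2e\log_e 2)<1$.
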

\begin{proof}
Let $V=(X+Y)(Z+W)$.
Conditional on $E=\{XZ\ne0\}$, both $(X+Y,Z+W)$ and $(XZ,XW,YZ)$ determine $V$, since
\[
YW=\frac{(XW)(YZ)}{XZ}.
\]
Thus, by the submodularity inequality~\cite{tao2010}
\begin{align} \nonumber
\Eta(V\given E)
+\Eta(X+Y,Z+W,XZ,XW,YZ\given E)
\le{}&
\Eta(X+Y,Z+W\given E)\\ \label{HVgivEbound}
&+\Eta(XZ,XW,YZ\given E).
\end{align}

Moreover, on $E$, the tuple
\[
(X+Y,Z+W,XZ,XW,YZ)
\]
determines $(X,Y,Z,W)$ unless $X+Y=Z+W=0$. Indeed, if $X+Y\ne0$, then
\[
Z=\frac{XZ+YZ}{X+Y},
\]
and if $Z+W\ne0$, then
\[
X=\frac{XZ+XW}{Z+W}.
\]
When $X+Y=Z+W=0$, we have
\[
(XZ,XW,YZ)=(XZ,-XZ,-XZ).
\]
Consequently,
\begin{align} \nonumber
&\Eta(X+Y,Z+W,XZ,XW,YZ\given E)\\ \nonumber &\qquad= \Eta(X,Y,Z,W\given E) -\Eta(X,Y,Z,W\given X+Y,Z+W,XZ,XW,YZ,E)\\ \nonumber &\qquad= \Eta(X,Y,Z,W\given E)\\ \label{Hofalot}  &\qquad\quad- \pr(X+Y=0,Z+W=0\given E) \Eta(X,Z\given XZ,E,X+Y=0,Z+W=0). 
\end{align}

Since $V$ is determined by $(X+Y,Z+W)$, we have
\[
\Eta(V\given E^c)
\le
\Eta(X+Y,Z+W\given E^c).
\]
Therefore,
\begin{align*}
\Eta(V)
&\le \Eta(V,\one_E)\\
&=\Eta(\one_E)
  +\pr(E)\Eta(V\given E)
  +\pr(E^c)\Eta(V\given E^c)\\
&\le \Eta(\one_E)
  +\pr(E)\Eta(V\given E)
  +\pr(E^c)\Eta(X+Y,Z+W\given E^c).
\end{align*}
Applying \eqref{HVgivEbound} to $\Eta(V\given E)$ and then \eqref{Hofalot}, we obtain
\begin{align*}
\Eta(V)\le{}&
\Eta(\one_E)
+\pr(E)\Eta(X+Y,Z+W\given E)\\
&+\pr(E^c)\Eta(X+Y,Z+W\given E^c)
+\pr(E)\Eta(XZ,XW,YZ\given E)\\
&-\pr(E)\Eta(X,Y,Z,W\given E)\\
&+\pr(E,X+Y=0,Z+W=0)\\
&\qquad\qquad\cdot
\Eta(X,Z\given XZ,E,X+Y=0,Z+W=0).
\end{align*}
Now
\begin{align*}
&\pr(E)\Eta(X+Y,Z+W\given E)
+\pr(E^c)\Eta(X+Y,Z+W\given E^c)\\
&\qquad=
\Eta(X+Y,Z+W\given\one_E)
\le
\Eta(X+Y,Z+W).
\end{align*}
Also, since $\one_E$ is determined by $XZ$, and hence by
$(XZ,XW,YZ)$,
\begin{align*}
\Eta(XZ,XW,YZ)
&=\Eta(\one_E)+\Eta(XZ,XW,YZ\given\one_E)\\
&=\Eta(\one_E)
+\pr(E)\Eta(XZ,XW,YZ\given E)\\
&\qquad
+\pr(E^c)\Eta(XZ,XW,YZ\given E^c),
\end{align*}
and consequently
\[
\Eta(\one_E)
+\pr(E)\Eta(XZ,XW,YZ\given E)
\le
\Eta(XZ,XW,YZ).
\]
Combining the last two estimates gives
\begin{align*}
\Eta(V)\le{}&
\Eta(X+Y,Z+W)+\Eta(XZ,XW,YZ)\\
&-\pr(E)\Eta(X,Y,Z,W\given E)\\
&+\pr(E,X+Y=0,Z+W=0)\\
&\qquad\qquad\cdot
\Eta(X,Z\given XZ,E,X+Y=0,Z+W=0),
\end{align*}
which proves the general statement.

Now suppose that $X,Y,Z,W$ are independent and identically distributed.
Then
\[
\Eta(X+Y,Z+W)=2\Eta(X+Y)
\qquad\hbox{and}\qquad
\Eta(XZ,XW,YZ)\le3\Eta(XY),
\]
and letting $q = \pr(X\ne 0)$, we also have
\[
\pr(E)\Eta(X,Y,Z,W\given E)
=
q^2
\bigl(2\Eta(X\given X\ne0)+2\Eta(X)\bigr)
=
2q(1+q)\Eta(X)
-2qh_2(q).
\]

Finally, write
$r=\pr(X+Y=0\given X\ne0)$.
Then
\begin{align*}
\pr(E,X+Y=0,&Z+W=0)
\Eta(X,Z\given XZ,E,X+Y=0,Z+W=0)\\
&\le q^2r^2\Eta(X\given E,X+Y=0)\\
&\le
2q^2r^2\log
\sum_{x\ne0}
\sqrt{\frac{\pr(X=x)\pr(X=-x)}{qr}}\\
&\le
q^2r^2\log\frac{q}{r}\\
&=
q^4\left(\frac rq\right)^2
\log\frac q r\\
&\le
\frac{1}{2e\log_e2} \cr
&< 1.
\end{align*}
Substitution now gives
\begin{align*}
\Eta\bigl((X+Y)(Z+W)\bigr)
\le{}&
2\Eta(X+Y)+3\Eta(XY)\\
&-2\pr(X\ne0)\bigl(1+\pr(X\ne0)\bigr)\Eta(X)\\
&+2\pr(X\ne0)h_2\bigl(\pr(X\ne0)\bigr)+1;
\end{align*}
and if $\pr(X=0)=0$, this reduces to
\[
\Eta\bigl((X+Y)(Z+W)\bigr)
\le
2\Eta(X+Y)+3\Eta(XY)-4\Eta(X)+1,
\]
which finishes the proof.
\end{proof}

The inequalities 
in Lemmas~\ref{lemHminlowerboundreal}
and~\ref{lempohoata}
yield an improved version of \Cref{thmminentropy} over the reals.

\begin{proof}[Proof of \Cref{thmminentropyreal}.]
Let $X$ be a finitely supported random variable in $\RR$ and
$X'$ be an independent copy of $X$. 
As in the proof of \Cref{thmminentropy}, letting $m=\lfloor 
\Etamin(X)\rfloor$ and 
recalling our assumption that $\pr(X=0) = 0$, we can 
apply
Lemma~\ref{lemHminlowerboundreal} followed by Lemma~\ref{lempohoata}
to obtain
$$\max\bigl\{ \Eta(X+X'), \Eta(XX')\bigr\}
\ge \frac{4\Eta(X) + 2\Etamin(X)}{5} - O\bigl(\log\Etamin(X)\bigr),$$
as required.
\end{proof}

A modified version of \Cref{lempohoata} appears in the 
blog post~\cite{pohoata2026}, where  it is stated that \Cref{proproru} can be generalised to arbitrary fields. If this is true, then one could remove the restriction $F=\RR$ from \Cref{thmminentropyreal}, thereby rendering \Cref{thmminentropy} obsolete. However, we were unable to adapt all the details of Roche-Newton and Rudnev's proof to the case of positive characteristic.

\section{A weak sum-product theorem for entropy}

While the entropic sum-product conjectures of~\cite{goh2024} remain 
unresolved, \Cref{thmminentropy} can be used to derive a weak sum-product 
theorem purely in terms of Shannon entropy. The following proposition is what 
we shall use to relate the min-entropy with Shannon entropy.

\begin{proposition}\label{propmaxprob}
Let $X$ be a random variable taking values in a finite subset $A$ of an abelian group. Then, letting $p_{\rm max} = \max_{x\in A} \pr(X=x)$, we have
$$p_{\rm max} \leq \frac{\Eta(X+X')-\Eta(X) + 3/2}{\Eta(X)},$$
where $X'$ is an independent copy of $X$.
In particular, if
$$p_{\rm max} \geq \frac{3/2+C}{\Eta(X)},$$
then 
$$\Eta(X+X') \geq \Eta(X) + C.$$
\end{proposition}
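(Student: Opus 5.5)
The plan is to isolate the atom of maximal mass and condition on how many of the two summands land on it. Fix $x_0\in A$ with $\pr(X=x_0)=p_{\rm max}=:p$; we may assume $p<1$, since if $p=1$ then $\Eta(X)=0$ and there is nothing to prove. Let $Y$ have the law of $X$ conditioned on $\{X\ne x_0\}$. The chain rule gives
$$\Eta(X)=h_2(p)+(1-p)\Eta(Y),\qquad\text{that is,}\qquad \Eta(Y)=\frac{\Eta(X)-h_2(p)}{1-p}.$$
Let $N=\one_{\{X=x_0\}}+\one_{\{X'=x_0\}}\in\{0,1,2\}$. Since conditioning reduces entropy, $\Eta(X+X')\ge \Eta(X+X'\given N)$, and I will estimate the three conditional terms separately.

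\emph{Case $N=2$} (probability $p^2$). Here $X+X'=2x_0$ is deterministic, so this term contributes $0$.

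\emph{Case $N=1$} (probability $2p(1-p)$). By symmetry, the conditional law of $X+X'$ is that of $x_0+Y$, so its entropy equals $\Eta(Y)$.

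\emph{Case $N=0$} (probability $(1-p)^2$). Here $X+X'$ is distributed as $Y_1+Y_2$ with $Y_1,Y_2$ independent copies of $Y$. Conditioning on $Y_2$ gives $\Eta(Y_1+Y_2)\ge\Eta(Y)$.

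Summing the three cases gives
$$\Eta(X+X')\ \ge\ \bigl(2p(1-p)+(1-p)^2\bigr)\Eta(Y)=(1+p)\bigl(\Eta(X)-h_2(p)\bigr),$$
and therefore
$$\Eta(X+X')-\Eta(X)\ \ge\ p\,\Eta(X)-(1+p)h_2(p).$$
Rearranging gives $p\le \bigl(\Eta(X+X')-\Eta(X)+(1+p)h_2(p)\bigr)/\Eta(X)$, which is the claimed bound with $3/2$ replaced by $\sup_{0\le p\le1}(1+p)h_2(p)$. The ``in particular'' statement then follows by rearranging the main inequality.

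The main obstacle is this last constant. The crude bound $(1+p)h_2(p)\le 2$ is too weak, and a direct numerical check shows that $\max_p(1+p)h_2(p)\approx 1.55$, attained near $p\approx 0.6$, which still slightly exceeds $3/2$. To close this gap I would first try to recover the information discarded in the step $\Eta(X+X')\ge\Eta(X+X'\given N)$, namely the mutual information $I(X+X';N)$. When $p$ is around $0.6$, the value $2x_0$ is taken with probability at least $p^2$, so it nearly identifies the event $\{N=2\}$, and this should gain back a constant amount.

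If that does not suffice, I would split into cases according to the size of $p$:
\begin{itemize}
\item for $p\le 1/2$ one has $(1+p)h_2(p)\le 3/2$ directly;
\item for $p>1/2$, I would use instead the trivial bound $\Eta(X+X')\ge\Eta(X)$ together with the estimate $\Eta(X)\le h_2(p)+(1-p)\Eta(Y)$, in order to check the inequality $p\,\Eta(X)\le \Eta(X+X')-\Eta(X)+3/2$ in the residual range where $p\,\Eta(X)$ is bounded.
\end{itemize}
Either of these refinements only affects the absolute constant, which is all that is used later in the paper.
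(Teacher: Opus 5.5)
Your decomposition by the number $N$ of summands equal to the heaviest atom $x_0$ is sound, but it does not prove the proposition as stated. It yields $\Eta(X+X')-\Eta(X)\ge p\,\Eta(X)-(1+p)h_2(p)$, and $\max_p(1+p)h_2(p)\approx 1.55>3/2$. The statement asserts the explicit constant $3/2$, so the remark that only some absolute constant is used later does not close the gap.

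Your second fallback also fails. For $1/2<p\lesssim 0.7$ the deficit $(1+p)h_2(p)-3/2$ is a positive constant independent of $\Eta(X)$, so there is no ``residual range where $p\,\Eta(X)$ is bounded''. The trivial bound $\Eta(X+X')\ge\Eta(X)$ only handles $p\,\Eta(X)\le 3/2$. Any $X$ with $p_{\rm max}$ in that range and $p\,\Eta(X)>3/2$ (for instance $\Eta(Y)$ large) is covered by neither bound.

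Your first fallback is the right repair, but you did not carry it out. It is exactly what the paper's proof does, rerunning \cite[Lemma~5.1]{sumsetrevisited} with the heavy atom moved to $0$: condition only on $\one_{\{N=0\}}$ rather than on $N$. The event $\{N\ge 1\}$ has probability $p(2-p)$. On it, the sum is $2x_0$ when $N=2$ and $x_0+Y\ne 2x_0$ when $N=1$, so $N$ is \emph{exactly} determined by $X+X'$ there, not just nearly. Hence
\[
\Eta(X+X'\given N\ge 1)=h_2\Bigl(\frac{p}{2-p}\Bigr)+\frac{2(1-p)}{2-p}\,\Eta(Y),
\]
which adds $p(2-p)\,h_2\bigl(p/(2-p)\bigr)$ to your lower bound. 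The grouping identity for the pair $(\one_{\{X=x_0\}},\one_{\{X'=x_0\}})$ is
\[
2h_2(p)=h_2\bigl((1-p)^2\bigr)+p(2-p)\,h_2\Bigl(\frac{p}{2-p}\Bigr)+2p(1-p).
\]
It turns your error term $(1+p)h_2(p)$ into $h_2\bigl((1-p)^2\bigr)+2p(1-p)-(1-p)h_2(p)\le 1+\tfrac12$. This gives $\Eta(X+X')\ge(1+p_{\rm max})\Eta(X)-3/2$. With $q=1-p_{\rm max}$, it is precisely the paper's bound $(2-q)\Eta(X)+qh_2(q)-2q(1-q)-h_2(q^2)$.
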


\begin{proof}
As before, let
$h_2(q)$ denote the entropy of a random variable that is $1$ 
with probability $q$ and $0$ otherwise,
$q\in [0,1]$.
Let $p = \pr(X\ne 0)$ and assume without loss of generality 
that $p_{\rm max} = 1-p$. Repeating the proof 
of~\cite[Lemma~5.1]{sumsetrevisited}, with $X+X'$ in place of $X-X'$, 
up to the derivation of~\cite[Eq.~(5.9)]{sumsetrevisited} 
with $p_{\rm max} = (1-p)$, yields the bound
\begin{align*}
\Eta(X+X') &\geq (2-p)\Eta(X) + ph_2(p)-2p(1-p)-h_2(p^2)\cr
&\geq (2-p)\Eta(X) - \frac{1}{2} - 1\cr
&=  \Eta(X) +   p_{\rm max}
\Eta(X) - \frac{3}{2},
\end{align*}
where we used that $p(1-p) \leq 1/4$ and $h_2(p^2)\leq 1$,
$p\in[0,1]$. The desired inequality follows. 
\end{proof}

This proposition can be combined with the main result of Section~3 to prove \Cref{thmweaksumproduct}. The statement that both $\Eta(X+X')$ and $\Eta(X X')$ cannot both be very small is not \textit{a~priori} obvious; for example,
if one were to replace $\Eta$ with $\Eta_2$, no such statement holds 
in general, as both additive and multiplicative doublings can simultaneously 
be of constant order; see~\cite[Footnote~18]{biw2006}.

\begin{proof}[Proof of \Cref{thmweaksumproduct}.]
Let $C>0$ and $\eps>0$ be real parameters.
Under the assumptions in the theorem (one of which involves a parameter $K_{C,\eps}$ to be specified later,
our goal is to prove that
$$\Eta(X_1X_2) \ge \biggl( \frac76-\eps \biggr) \Eta(X),$$
so long as
$$\Eta(X_1+X_2) \le \Eta(X) + C$$
and $\Eta(X)$ is large enough depending on $C$ and $\eps$.

If $\eps > 1/6$, then the claim simply follows from the inequality $\Eta(X_1X_2) \geq 
\pr(X\neq 0)\Eta(X)$ combined with \Cref{propmaxprob}. Hence we assume that $0<\eps \leq 1/6$. 

Using the condition on the additive doubling and \Cref{proptaofivetwo}, we can express $X = U+Z$, where $U$ is uniform on a coset progression $H+P$ of cardinality $\Theta_C(2^{\Eta(X)})$ and rank $O_C(1)$, and $\Eta(Z) = O_C(1)$. 

Fixing $C' \geq H(Z)$, we let
\begin{equation} \label{deltadef}
    \delta = 2^{-2C'/\eps}
\end{equation} 
and $A = \bigl\{z\in F :\pr(Z=z)\geq \delta\bigr\}.$
Let $X',Z'$ have the distributions of $X,Z$, respectively,
conditioned on $\{Z\in A\}$, so that
$X' = U+Z'$ is supported on $H+P+A$.
Then
\begin{align*}
\pr(X' = x)  &= \frac{1}{\pr(Z \in A)}\sum_{z \in A }\pr(U = x-z, Z = z) \\
&\leq \frac{1}{\pr(Z \in A)}\sum_{z \in A }\pr(U = x-z) \\
&\le \frac{|A|}{\pr(Z\in A)|H+P|}  \cr
&\le \frac{1} {\Theta_{C,\delta}\bigl(2^{\Eta(X)}\bigr)},
\end{align*}
since $Z'$ is supported on $A$, where $|A|\le 1/\delta$. 
Equivalently, we have shown that
\begin{equation} \label{XprimeHmin}
\Etamin(X') \geq \Eta(X) - O_{\delta, C}(1).
\end{equation}
Now we apply \Cref{thmminentropy} to $X'$ to obtain
\begin{equation}\begin{aligned}
     \max\bigl\{ \Eta(X_1'+X_2'), \Eta( X_1'X_2') \bigr\} &\ge 
\frac{4 \Eta(X') + 3\Etamin(X')}{6} - O(1) \\
&\geq \frac{7}{6}\Etamin(X') - O(1)\\ \label{primefirstsp}
&\geq \frac{7}{6}\Eta(X) - O_{\delta,C}(1),
\end{aligned}\end{equation}
where $X_1',X_2'$ are independent copies of $X'$ and we used \eqref{XprimeHmin} in the last inequality. To verify that $X'$ satisfies the assumption of \Cref{thmminentropy} in the case of positive characteristic, note that since $X'$ takes values in $H+P+A$ with $|A| \leq 1/\delta$ and $|H+P+A|\leq |H+P|/\delta$, we must have $\max_{x' \in H+P+A}\pr(X'=x') \geq \delta/|H+P|$.
Consequently,
$$\Etamin(X') \leq \log{\frac1\delta} + \log{|H+P|} \leq \log{\frac1\delta} + H(X)+H(Z) \leq \Eta(X) +C'+ \log{\frac{1}{\delta}},$$
and the assumption is satisfied by $X'$ provided that $\Eta(X) \leq 2/3\log{p} - K_1-C'-\log(1/\delta)$, where $K_1$ is the constant appearing in \Cref{thmminentropy}. Thus it suffices to take 
\begin{equation} \label{K1eps}
    K_{C,\eps} = C'+\log{\frac{1}{\delta}}+K_1,
\end{equation}
with our earlier choice~\eqref{deltadef} of $\delta$.

By \Cref{lemZsupport} (and possibly taking $\delta$ smaller depending only on $C$) we have
\begin{equation}\begin{aligned} \label{taoprimesestimate}
C \ge \Eta(X_1 + X_2) - \Eta(X)
\ge \bigl(\Eta(X_1' + X_2') - \Eta(X')\bigr)\biggl(1-\frac{2C'}{\log(1/\delta)}\biggr) - C'\frac{\log\log(1/\delta)}{\log(1/\delta)},
\end{aligned}\end{equation}
hence
$$\Eta(X_1' + X_2') \le \Eta(X') + O_{C,\delta}(1).$$
But then, since
$$\Eta(X') \pr(Z\in A) \le \Eta\bigl(X\given 
\one_{\{Z\in A\}}\bigr)\le \Eta(X)$$
and
$$\pr(Z\in A) \ge 1- \frac{C'}{\log(1/\delta)},$$
we have
$$\Eta(X_1'+X_2')\le \biggl(\frac{1}{ 1-C'/\log(1/\delta)}\biggr) \Eta(X) + O_{C,\delta}(1),$$
by~\eqref{eqCoverlog}.
But by our choice of $\delta$ we have $1/\bigl(1-C'/\log(1/\delta)\bigr) <7/6$, so~\eqref{primefirstsp} forces
\begin{equation} \label{multdoublinglarge}
\Eta(X_1'X_2') -\Eta(X') \geq \frac{1}{6}\Eta(X) - O_{\delta,C}(1),
\end{equation}
provided that $\Eta(X)$ (and hence $\Eta(X')$) is large enough depending on $\delta$ and $C$.

Now, by observing that~\eqref{multdoublinglarge} also implies $\Eta(X_1' X_2') - \Eta(X') \geq 0$ for $\Eta(X)$ large enough, we rearrange the multiplicative statement of \Cref{lemZsupport} to
$$\frac{\Eta(X_1 X_2)}{\pr(X\ne 0)} \ge \Eta(X)
+ \bigl(\Eta(X_1' X_2') - \Eta(X')\bigr)\biggl(1-\frac{C'}{\log(1/\delta)}\biggr)^2 - O_{\delta,C}(1),$$
and by \eqref{multdoublinglarge} we now have
$$
\frac{\Eta(X_1 X_2)}{\pr(X\ne 0)} \ge \Eta(X) + \Eta(X)\biggl(\frac16-\frac{C'}{3\log(1/\delta)}\biggr) - O_{\delta,C}(1).
$$
We can make the factor of $1/\pr(X\ne 0)$ disappear by using \Cref{propmaxprob}; under our hypotheses this states that
$$\pr(X\ne 0) \geq 1- \frac{3/2+C}{\Eta(X)}.$$
Hence
$$
\Eta(X_1 X_2) \ge  \Eta(X)\biggl(\frac76-\epsilon\biggr),
$$
by the choice of $\delta$, as long as $\Eta(X)$ is large enough depending on $\delta, \epsilon$ and $C.$
\end{proof}

\section*{Acknowledgements}

We would like to thank Leonardo Franchi and Oliver Roche-Newton for helpful discussions. The first and third named authors were supported in part by the {\small EPSRC}-funded {\small INFORMED-AI} project EP/Y028732/1. The second author is funded by the Natural Sciences and Engineering Research Council of Canada; he also wishes to thank Timothy Gowers for hosting his visit to the University of Cambridge, where the bulk of this paper was written.

\bibliographystyle{alphacitation}
\xpatchcmd{\em}{\itshape}{\slshape}{}{}
\bibliography{citations}

\begin{thebibliography}{AYMRS17}
\newcommand{\enquote}[1]{``#1''}

\bibitem[AYMRS17]{yazicietal2017}
Esen Aksoy~Yazici, Brendan Murphy, Misha Rudnev, and Ilya Shkredov.
\newblock \enquote{Growth estimates in positive characteristic via collisions.}
\newblock \emph{International Mathematics Research Notices} \textbf{23} (2017), 7148--7189.

\bibitem[BIW06]{biw2006}
Boaz Barak, Russell Impagliazzo, and Avi Wigderson.
\newblock \enquote{Extracting randomness using few independent sources.}
\newblock \emph{SIAM Journal on Computing} \textbf{36} (2006), 1095--1118.

\bibitem[BL19]{basitlund2019}
Abdul Basit and Ben Lund.
\newblock \enquote{An improved sum-product bound for quaternions.}
\newblock \emph{SIAM Journal on Discrete Mathematics} \textbf{33} (2019), 1044--1060.

\bibitem[Blo25]{bloom2025}
Thomas Bloom.
\newblock \enquote{Control and its applications in additive combinatorics.}
\newblock \emph{arXiv preprint 2501.29470}  (2025).

\bibitem[Cau13]{cauchy}
Augustin-Louis Cauchy.
\newblock \enquote{Recherches sur les nombres.}
\newblock \emph{Journal de {l'\'{E}cole} Polytechnique} \textbf{9} (1813), 99--116.

\bibitem[CG88]{CG88}
Benny Chor and Oded Goldreich.
\newblock \enquote{Unbiased bits from sources of weak randomness and probabilistic commuication complexity.}
\newblock \emph{SIAM Journal on Computing} \textbf{17} (1988), 230--261.

\bibitem[CT06]{cover:book2}
Thomas~Merrill Cover and Joy~Aloysius Thomas.
\newblock \emph{Elements of Information Theory} (New York: John Wiley and Sons, 2nd edition, 2006).

\bibitem[Dav35]{davenport}
Harold Davenport.
\newblock \enquote{On the addition of residue classes.}
\newblock \emph{Journal of the London Mathematical Society} \textbf{1} (1935), 30--32.

\bibitem[dZ16]{dezeeuw2016}
Frank de~Zeeuw.
\newblock \enquote{A short proof of {R}udnev’s point-plane incidence bound.}
\newblock \emph{arXiv preprint 1612.02719}  (2016).

\bibitem[GGMT24]{martontorsion}
William~Timothy Gowers, Ben Green, Freddie Manners, and Terence Tao.
\newblock \enquote{Marton's conjecture in abelian groups with bounded torsion.}
\newblock \emph{arXiv preprint 2404.02244}  (2024).

\bibitem[GGMT25]{pfr}
William~Timothy Gowers, Ben Green, Freddie Manners, and Terence Tao.
\newblock \enquote{On a conjecture of Marton.}
\newblock \emph{Annals of Mathematics} \textbf{201} (2025), 515--549.

\bibitem[GK24]{GKclt}
Lampros Gavalakis and Ioannis Kontoyiannis.
\newblock \enquote{Entropy and the discrete central limit theorem.}
\newblock \emph{Stochastic Processes and Their Applications} \textbf{170} (2024), 104294.

\bibitem[GMT25]{sumsetrevisited}
Ben Green, Freddie Manners, and Terence Tao.
\newblock \enquote{Sumsets and entropy revisited.}
\newblock \emph{Random Structures \& Algorithms} \textbf{66} (2025), e21252.

\bibitem[Goh24]{goh2024}
Marcel~Kieren Goh.
\newblock \enquote{On an entropic analogue of additive energy.}
\newblock \emph{arXiv preprint 2406.18798}  (2024).

\bibitem[HAT14]{hat}
Saeid Haghighatshoar, Emmanuel Abbe, and Emre Telatar.
\newblock \enquote{A new entropy power inequality for integer-valued random variables.}
\newblock \emph{IEEE Transactions on Information Theory} \textbf{60} (2014), 3787--3796.

\bibitem[KM14]{KM:14}
Ioannis Kontoyiannis and Mokshay Madiman.
\newblock \enquote{Sumset and inverse sumset inequalities for differential entropy and mutual information.}
\newblock \emph{IEEE Transactions on Information Theory} \textbf{60} (2014), 4503--4514.

\bibitem[KMPS20]{kohetal2020}
Doowon Koh, Mirzael Mozhgan, Thang Pham, and Chun-Yen Shen.
\newblock \enquote{Exponential sum estimates over prime fields.}
\newblock \emph{International Journal of Number Theory} \textbf{16} (2020), 291--308.

\bibitem[Kol15]{kollar2015}
J\'anos Koll\'ar.
\newblock \enquote{{S}zemerédi–{T}rotter-type theorems in dimension 3.}
\newblock \emph{Advances in Mathematics} \textbf{271} (2015), 30--61.

\bibitem[LGK26]{lgk2025}
Rupert Li, Lampros Gavalakis, and Ioannis Kontoyiannis.
\newblock \enquote{Entropic additive energy and entropy inequalities for sums and products.}
\newblock \emph{IEEE Transactions on Information Theory} \textbf{72} (2026), 1553--1568.

\bibitem[Li26]{li2026}
Rupert Li.
\newblock \enquote{The entropic sum-product phenomenon.}
\newblock \emph{arXiv preprint 2607.29042}  (2026).

\bibitem[LRN11]{LiRocheNewton}
Liangpan Li and Oliver Roche-Newton.
\newblock \enquote{An improved sum-product estimate for general finite fields.}
\newblock \emph{SIAM Journal on Discrete Mathematics} \textbf{25} (2011), 1285--1296.

\bibitem[MO25]{matheoregan}
Andr\'as M\'ath\'e and William O'Regan.
\newblock \enquote{Discretised sum-product theorems by Shannon-type inequalities.}
\newblock \emph{Journal of the London Mathematical Society} \textbf{112} (2025), e70389.

\bibitem[Poh26]{pohoata2026}
Cosmin Pohoata.
\newblock ``Sum-product in finite fields via entropy.'' Blog post at \texttt{https://pohoatza.wordpress.com/2026/01/12/sum-product-in-finite -fields-via-entropy}, January 12 (2026).

\bibitem[Rag25]{pfrimproved}
Rushil Raghavan.
\newblock \enquote{Improved bounds for the {Freiman-Ruzsa} theorem.}
\newblock \emph{arXiv preprint 2512.11217}  (2025).

\bibitem[RNR15]{roru2015}
Oliver Roche-Newton and Misha Rudnev.
\newblock \enquote{On the {M}inkowski distances and products of sum sets.}
\newblock \emph{Israel Journal of Mathematics} \textbf{209} (2015), 507--526.

\bibitem[Ruz09]{ruzsa2009}
Imre Ruzsa.
\newblock \enquote{Sumsets and entropy.}
\newblock \emph{Random Structures and Algorithms} \textbf{34} (2009), 1--10.

\bibitem[Sha48]{shannon:48}
Claude~Elwood Shannon.
\newblock \enquote{A mathematical theory of communication.}
\newblock \emph{Bell System Technical Journal} \textbf{27} (1948), 379--423, 623--656.

\bibitem[Sta59]{stam:59}
Aart~Johannes Stam.
\newblock \enquote{Some inequalities satisfied by the quantities of information of {F}isher and {S}hannon.}
\newblock \emph{Information and Control} \textbf{2} (1959), 101--112.

\bibitem[Tao10]{tao2010}
Terence Tao.
\newblock \enquote{Sumset and inverse sumset theory for {S}hannon entropy.}
\newblock \emph{Combinatorics, Probability, and Computing} \textbf{19} (2010), 603--639.

\bibitem[TV08]{taovu2008}
Terence Tao and Van~Ha Vu.
\newblock \enquote{John-type theorems for generalized arithmetic progressions and iterated sumsets.}
\newblock \emph{Advances in Mathematics} \textbf{219} (2008), 428--449.

\bibitem[Vad12]{vadhan2012}
Salil~Pravin Vadhan.
\newblock \enquote{Pseudorandomness.}
\newblock \emph{Foundations and Trends in Theoretical Computer Science} \textbf{7} (2012), 1--336.

\end{thebibliography}

\bigskip\noindent
\textsc{Statistical Laboratory, {\small DPMMS}, University of Cambridge, Centre for Mathematical Sciences, Wilberforce Road, Cambridge {\small CB3$\;$0WB}, {\small U.K.}}

\smallskip\noindent
\textsl{E-mail address}: \texttt{lg560@cam.ac.uk}

\bigskip\noindent
\textsc{Department of Mathematics and Statistics, McGill University, Montr\'eal, Qu\'ebec {\small H3A$\;$0B9}, Canada}

\smallskip\noindent
\textsl{E-mail address}: \texttt{marcel.goh@mail.mcgill.ca}

\bigskip\noindent
\textsc{Statistical Laboratory, {\small DPMMS}, University of Cambridge, Centre for Mathematical Sciences, Wilberforce Road, Cambridge {\small CB3$\;$0WB}, {\small U.K.}}

\smallskip\noindent
\textsl{E-mail address}: \texttt{yiannis@maths.cam.ac.uk}

\end{document}